\newcommand{\sSet}{\mathit{sSet}}
\newcommand{\Dec}{\mathrm{Dec}} 
\newcommand{\W}{\overline{W}} 
\newcommand{\last}{\mathrm{last}} 
\newcommand{\first}{\mathrm{first}}
\newcommand{\Wbar}{\overline{W}}
\newcommand{\cosk}{\mathit{cosk}} 
\newcommand{\Set}{\mathit{Set}}
\newcommand{\Top}{\mathit{Top}}
\newcommand{\cC}{\mathscr{C}}
\renewcommand{\cD}{\mathscr{D}}
\newcommand{\cE}{\mathscr{E}} 
\newcommand{\cK}{\mathscr{K}} 
\newcommand{\cU}{\mathscr{U}} 
\newcommand{\bH}{\mathbf{H}} 
\newcommand{\Pre}{\mathit{Pre}} 
\newcommand{\Sh}{\mathit{Sh}}
\newcommand{\Gpd}{\mathit{Gpd}}
\newcommand{\Gp}{\mathit{Gp}} 
\newcommand{\Open}{\mathrm{Open}} 
\newcommand{\op}{\mathrm{op}}
\newcommand{\Map}{\mathrm{Map}} 
\newcommand{\map}{\mathrm{map}}
\newcommand{\Ho}{\mathrm{Ho}} 
\newcommand{\Cov}{\mathrm{Cov}} 
\renewcommand{\hom}{\mathbf{hom}} 
\newcommand{\dslash}{/\negmedspace /}
\def\varholim@#1#2{\mathop{\vtop{\ialign{##\crcr
 \hfil$#1\m@th\operator@font holim$\hfil\crcr
 \noalign{\nointerlineskip\kern\ex@}#2#1\crcr
 \noalign{\nointerlineskip\kern-\ex@}\crcr}}}}
\def\hocolim{\mathpalette\varholim@\rightarrowfill@} 
\def\hoinvlim{\mathpalette\varholim@\leftarrowfill@}
\newcommand{\AUT}{\mathrm{AUT}} 
\newcommand{\Aut}{\mathrm{Aut}}
\renewcommand{\a}{\alpha}
\newtheorem{theorem}{Theorem}[section]
\newtheorem{proposition}{Proposition}[section]
\newtheorem{lemma}{Lemma}[section] 
\newtheorem{corollary}{Corollary}[section]
\theoremstyle{definition}
\newtheorem{definition}{Definition}[section]
\begin{document} 

\title{Classifying theory for simplicial parametrized groups} 
\author{Danny Stevenson\thanks{This work was supported by the 
 Engineering and Physical Sciences Research Council [grant number EP/I010610/1] 
 and the Australian Research Council [grant number DP120100106]}\\
School of Mathematical Sciences\\
The University of Adelaide  \\
SA 5005\\
Australia\\
email: {\tt daniel.stevenson@adelaide.edu.au} }

\maketitle 

\begin{abstract}
In this paper we describe a classifying theory for families of simplicial 
topological groups.  If $B$ is a topological space and 
$G$ is a simplicial topological group, then we 
can consider the non-abelian cohomology $H(B,G)$ of $B$ with coefficients in $G$.  
If $G$ is a topological group, thought of as a constant simplicial 
group, then the set $H(B,G)$ is the usual set of isomorphism classes of 
principal $G$-bundles, or $G$-torsors, on $B$.  For more general simplicial groups $G$, 
the set $H(B,G)$ parametrizes the set of equivalence classes of 
higher $G$-torsors.  In this paper we consider a more general setting where 
$G$ is replaced by a simplicial group in the category of spaces over $B$.  
The main result of the paper is that under suitable conditions on $B$ 
and $G$ there is an isomorphism between $H(B,G)$ and the set 
of isomorphism classes of parametrized principal bundles on $B$, 
with structure group given by the fiberwise geometric realization of $G$.      
 
\medskip 
 2010 {\it Mathematics Subject Classification} 18G55, 18F20, 55R35.
\end{abstract}

\tableofcontents

\section{Introduction} 

Let $G$ be a presheaf of groups on a topological space $B$.  Then the \v{C}ech cohomology 
$\check{H}^1(B,G)$ of $B$ with coefficients in $G$ is traditionally defined in terms of $G$-valued cocycles $g_{ij}\in G(U_i\cap U_j)$ 
relative to some open cover $(U_i)_{i\in I}$ of $B$.  The cocycle condition satisfied by the $g_{ij}$ is the equation 
\[ 
g_{ij}g_{jk} = g_{ik} 
 \]
in $G(U_i\cap U_j\cap U_k)$.  The set $\check{H}^1(B,G)$ 
of equivalence classes of such cocycles parametrizes isomorphism classes of $G$ 
{\em torsors} on $B$, i.e.\ presheaves on $B$ equipped with a principal action of $G$.  
The typical case is when $G$ is the sheaf of groups on $B$ represented by a 
topological group $G$, in which case $\check{H}^1(B,G)$ parametrizes the set of principal $G$-bundles on $B$.  
Slightly more generally, we could consider a parametrized situation    
where the group $G$ is replaced by a family of topological groups $G_b$  
labelled by the points $b$ of $B$.  In other words, we consider $G$ as a 
group object in the category of spaces over $B$.  Then $G$ determines a sheaf on 
$B$, which we also denote by $G$, whose sections are the local sections of the 
projection to $B$, and the set $\check{H}^1(B,G)$ 
parametrizes the set of isomorphism classes of {\em parametrized} principal $G$-bundles on $B$ (see \cite{CJ,RS}).    

Now suppose that for any open set $U\subset B$, 
the group of sections $G(U)$ is replaced by a {\em simplicial} group 
of sections $G(U)$.  In other words, 
suppose that $G$ is a group object in the category of simplicial presheaves on $B$.  
There is a generalization 
of the \v{C}ech cohomology 
set above to a cohomology set $\check{H}(B,G)$.  
Analogously there is a notion of `higher $G$-torsor' on 
$B$ and under suitable hypotheses $\check{H}(B,G)$ parametrizes the set of 
isomorphism classes of these higher torsors.  
While higher torsors are implicitly the subject matter of this paper, we 
will have no need to consider them explicitly (apart 
from a brief discussion in Section~\ref{non abelian cohomology}) and 
we instead refer the 
interested reader to \cite{Baez-St, Bartels, Breen1, Breen2, JL, L-GSX, Wockel} among others for further details.  

A cocycle 
in this generalized \v{C}ech cohomology is again defined 
relative to some open cover $U = (U_i)_{i\in I}$ of $B$, but 
now the data defining the cocycle is more intricate (an 
early reference where these cocycles are made explicit in the 
context of classifying fibrations is the thesis \cite{Wirth} of Wirth).  
To begin with we have sections $g_{ij}\in G_0(U_i\cap U_j)$ of 
0-simplices, but now the equation $g_{ij}g_{jk} = g_{ik}$ above 
will only be satisfied up to homotopy --- thus 
there will be sections $g_{ijk}\in G_1(U_i\cap U_j\cap U_k)$ 
which satisfy the equations 
\[
d_0(g_{ijk}) = g_{ik},\quad d_1(g_{ijk}) = g_{ij}g_{jk}
\]
in $G_0(U_i\cap U_j\cap U_k)$.  In turn these sections $g_{ijk}$ will 
satisfy a cocycle equation up to homotopy over quadruple intersections and so on.  

It is much more illuminating to give this a more 
conceptual reformulation and think of the cocycle 
as a morphism of {\em simplicial presheaves} on $B$.  
Furthermore, working in the category $s\Pre(B)$ of 
simplicial presheaves on $B$ allows us to take advantage 
of the model category structure on 
$s\Pre(B)$ constructed in \cite{Jardine1} so that we 
have powerful homotopy theoretic tools at our disposal.  

Roughly speaking, this model structure gives rise to a non-abelian version 
of the derived category construction.  If $X$ is a simplicial presheaf on $B$, 
then for any point $p\in B$ one can define a simplicial set $X_p$, called the stalk of $X$ at $p$.  
A map of simplicial presheaves $X\to Y$ is said to be a weak equivalence if it induces  
a weak homotopy equivalence $X_p\to Y_p$ on stalks for all points $p$ of $B$.  
This notion of weak equivalence is a generalization of the notion of quasi-isomorphism 
of complexes of sheaves: if $X$ and $Y$ are presheaves of simplicial abelian groups 
then a map $X\to Y$ is a weak equivalence precisely if it is a quasi-isomorphism of the 
corresponding complexes of sheaves. 

In analogy with the derived category construction we can consider the homotopy category 
$\Ho(s\Pre(B))$ with the same objects as $s\Pre(B)$ but in which 
these weak equivalences are now isomorphisms.  The fact 
that these weak equivalences in $s\Pre(B)$ form part of the structure of a model category 
means that there is a simple description of the set of morphisms $[X,Y]$ in 
$\Ho(s\Pre(B))$.  It turns out that when $G$ is a presheaf of groups on $B$ as above, 
then there is a bijective correspondence between the \v{C}ech cohomology set $\check{H}^1(B,G)$ 
and $[1,\Wbar G]$, where $1$ denotes the terminal simplicial presheaf on $B$ and $\Wbar G$ is 
the classifying simplicial presheaf of the group $G$.  

More generally, if $A$ is a sheaf of abelian groups on $B$, then the sheaf cohomology 
group $H^n(B,A)$ is isomorphic to $[1,K(A,n)]$, where $K(A,n)$ is a simplicial sheaf on $B$ 
whose stalk at a point $p\in B$ is the Eilenberg-Mac Lane space $K(A_p,n)$.  In general there is a 
subtle difference between \v{C}ech cohomology and sheaf cohomology; this distinction 
forces us to work with so-called `hyper-\v{C}ech cohomology' in which ordinary covers 
of $B$ by open sets are replaced with `hypercovers'.  If $G$ is a presheaf of 
simplicial groups on $B$, then we will define $H(B,G) = [1,\Wbar G]$, where $\Wbar G$ 
denotes the classifying simplicial presheaf of the simplicial group $G$.  Elements of 
$H(B,G)$ are represented by $G$-cocycles on $B$ in a sense that we will make 
precise in Section~\ref{non abelian cohomology}.  

In this paper we want to study $H(B,G)$ in the following special case.  Let $\cK$ 
denote the category of $k$-spaces \cite{Vogt} and suppose 
that $B$ is a paracompact, Hausdorff space.  Write $\cK_{/B}$ for 
the category of spaces over $B$.  We consider a {\em parametrized simplicial group} $G$, i.e.\ 
a simplicial group in $\cK_{/B}$,  
so that $G$ is a simplicial $k$-space, augmented over $B$, which is 
equipped with a simplicial map $G\times_B G\to G$ giving 
$G_n$ the structure of a fiberwise group over $B$ for all $n\geq 0$.    
Then $G$ determines a presheaf (in fact a sheaf) of simplicial groups on $B$, which 
we will also denote by $G$.  The simplicial set of sections of $G$ over an open 
set $U\subset B$ has as its set of $n$-simplices the set of sections of the map $G_n\to B$ 
over $U$.  
 
By taking the fiberwise geometric realization of the simplicial group $G$ in $\cK_{/B}$ 
we obtain a group $|G|$ in $\cK_{/B}$.  We can therefore form the ordinary \v{C}ech cohomology 
$\check{H}^1(B,|G|)$ as described above.  To state the main result of our paper 
we must also assume that the simplicial parametrized group $G$ is {\em fibrant} in the sense that the structure 
map $G\to B$ is an $f$-fibration (see Theorem~\ref{may sig model thm} below).  
We then have

\begin{theorem} 
\label{main result}
Let $B$ be a paracompact, Hausdorff space and let $G$ 
be a fibrant simplicial parametrized group in $\cK_{/B}$.  
If $G_n$ is well sectioned for all $n\geq 0$ then there is an isomorphism 
of sets 
\[ 
H(B,G) = \check{H}^1(B,|G|).  
\]
\end{theorem}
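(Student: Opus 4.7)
The plan is to identify both sides with equivalence classes of cocycles relative to open covers of $B$ and then to pass between them via fiberwise geometric realization. On the left, I would use the Jardine model structure on $s\Pre(B)$ to compute $H(B,G) = [1,\Wbar G]$ as the filtered colimit, over refinements of open covers $\mathcal{U} = (U_i)_{i\in I}$ of $B$, of $\pi_0\,\map(C(\mathcal{U}),\Wbar G)$, where $C(\mathcal{U})$ is the Čech nerve of $\mathcal{U}$. The augmentation $C(\mathcal{U})\to 1$ is a local weak equivalence, and a simplicial presheaf map $C(\mathcal{U})\to \Wbar G$ unwinds, using the standard description of $\Wbar$, to exactly a family of sections $g_{i_0\cdots i_n}\in G_{n-1}(U_{i_0}\cap\cdots\cap U_{i_n})$ satisfying the higher cocycle conditions sketched in the introduction. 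Two such cocycles agree in $H(B,G)$ iff they are simplicially homotopic on a common refinement.

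Applying the fiberwise geometric realization $|{-}|\colon s(\cK/B)\to \cK/B$ sends such a $G$-cocycle to a fiberwise principal $|G|$-bundle on $B$. The hypothesis that each $G_n$ is well sectioned is used twice: it ensures that $|{-}|$ commutes with fiber products over $B$, so that $|G|$ is a genuine group in $\cK/B$ and realization preserves principal actions; and it implies that $|\Wbar G|\to B$ is a fiberwise classifying space for $|G|$, whose fiber over $p\in B$ is a model for $B|G_p|$. Concretely, realizing the $0$- and $1$-simplices of the cocycle produces transition maps $U_i\cap U_j\to |G|$, while the higher simplices realize to fiberwise homotopies witnessing the cocycle equation up to coherent homotopy; on a sufficiently fine refinement these collapse, in the sense of \cite{CJ,RS}, to an honest $|G|$-valued $1$-cocycle and hence to a class in $\check{H}^1(B,|G|)$.

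The crux is then the bijection. Injectivity is relatively formal: a simplicial homotopy of $G$-cocycles realizes to a fiberwise concordance of $|G|$-bundles over $B\times[0,1]$, and over a paracompact base concordance implies isomorphism. Surjectivity is the main obstacle: given a fiberwise principal $|G|$-bundle $P\to B$ with trivializing cover $\mathcal{U}$ and transitions $g_{ij}\colon U_i\cap U_j\to |G|$, one must, after possibly refining $\mathcal{U}$, lift the $g_{ij}$ through the skeletal filtration of $|G|$ and inductively produce the higher components $g_{i_0\cdots i_n}$ of a simplicial $G$-cocycle. This inductive lift is the hardest part of the proof: at each stage one is extending a partial cocycle one simplicial degree further, and the well-sectionedness guarantees that the relevant extension problem is posed against a fiberwise closed cofibration, so that extension is possible after a further refinement enabled by paracompactness of $B$. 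Once this lifting argument is in place, the two constructions are manifestly inverse on cocycle representatives, yielding the claimed bijection.
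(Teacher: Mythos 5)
There are genuine gaps here, and they sit at exactly the points the paper is designed to handle. First, your computation of the left-hand side is unjustified: you propose to compute $H(B,G)=[1,\Wbar G]$ as a filtered colimit of $\pi_0\,\map(\check{C}(\mathcal{U}),\Wbar G)$ over ordinary open covers $\mathcal{U}$, but for a general (non-truncated) simplicial group $G$ this only computes the \v{C}ech set $\check{H}(B,G)$, and the comparison map $\check{H}(B,G)\to H(B,G)$ need not be an isomorphism. The generalized Verdier hypercovering theorem requires the colimit to be taken over hypercovers, not \v{C}ech nerves; the case where \v{C}ech covers suffice (as for the constant group $|G|$, giving $\check{H}^1(B,|G|)\cong[1,\Wbar|G|]$) depends on truncation, which $G$ does not satisfy. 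So your starting identification already assumes a \v{C}ech-to-hyper-\v{C}ech comparison that is part of what has to be proved.

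Second, the two directions of your bijection are not actually established. Your ``injectivity'' argument (simplicial homotopy of cocycles gives concordance, hence isomorphism of bundles over paracompact $B$) only shows the realization map is well defined; injectivity requires the converse, that isomorphic $|G|$-bundles come from equivalent $G$-cocycles, and moreover the relation between concordance and simplicial homotopy is exactly one of the subtle points (it is noted in the introduction that these relations differ a priori). Your surjectivity step --- lifting transition functions $g_{ij}\colon U_i\cap U_j\to|G|$ inductively through the skeletal filtration of $|G|$ to build the higher simplices of a $G$-cocycle after refinement --- is the entire analytic content of the theorem, and no argument is given for why the successive extension/obstruction problems are solvable; well-sectionedness and paracompactness alone do not obviously produce the lifts. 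The proof in the paper avoids this rectification problem altogether: it forms the span $|G|\leftarrow|\Dec\,G|\to G$ coming from the total d\'{e}calage, shows $\Wbar|\Dec\,G|\to\Wbar|G|$ is a locally acyclic local fibration (via the criterion of Proposition~\ref{contractibility propn}, the join/matching-object computations, and the realization theorem for simplicial principal bundles from \cite{RS}), and shows $H(B,|\Dec\,G|)\to H(B,G)$ is an isomorphism using the degreewise splitting of $|\Dec\,G|\to G$ and the fiberwise contractibility of $|\ker|$, so that both legs induce bijections and $H(B,G)\cong[1,\Wbar|G|]\cong\check{H}^1(B,|G|)$. If you want to pursue your direct cocycle-by-cocycle approach, you would need to supply precisely such a lifting/rectification theorem, which is where the real work lies.
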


We discuss some consequences of this theorem.  To begin with, the parametrized 
setting is a natural topological environment for the sheaf theoretic 
notions of higher torsors discussed in \cite{Breen1, Breen2}.  However, Theorem~\ref{main result} is 
interesting (and new) even if the parametrized group $G$ is trivial in the sense that 
it is equal to the product $B\times G$ for some simplicial group $G$ in $\cK$.  
In this case it follows from results of \cite{RS} that the isomorphism 
of    
Theorem~\ref{main result} can 
be restated as an isomorphism of sets
\begin{equation}
\label{main iso}
H(B,G) = [B,B|G|],   
\end{equation}
where $[B,B|G|]$ denotes the set of homotopy classes of maps from $B$ into 
$B|G|$.  Thus $B|G|$ is a classifying space for non-abelian cohomology 
with coefficients in $G$.  
In fact our proof of Theorem~\ref{main result} shows more than what is stated in this case: in 
the course of our proof we construct a {\em universal} higher $G$-torsor on 
$B|G|$ with the property that every higher $G$-torsor on 
$B$ is equivalent to one obtained from this universal higher torsor via a map from $B$ to $B|G|$, 
unique up to homotopy of maps.  

In another direction, if $G$ is a simplicial abelian group in $\cK_{/B}$, then $\Wbar G$ is also 
a simplicial abelian group in $\cK_{/B}$ and it follows that $H(B,G)$ has a natural structure as an abelian group.  
Likewise, the fiberwise geometric realization $|G|$ has the structure of an abelian group in 
$\cK_{/B}$ and hence $\check{H}^1(B,|G|)$ also has a natural structure of an abelian group.  
Another by-product of our proof of Theorem~\ref{main result} is that, under the 
above hypotheses, the isomorphism~\eqref{main iso} 
above is an isomorphism of abelian groups in this case (see Section~\ref{sec:univ cocycle}).  
In particular, if $A$ is a well pointed abelian group in $\cK$, then (by the usual 
mechanism of the Dold-Kan correspondence and using the results of \cite{Brown}) there is a simplicial abelian 
group $A[n]$ in $\cK$ for any integer $n\geq 0$ with the property that 
$H(B,A[n])$ is isomorphic to the \v{C}ech cohomology group $\check{H}^{n+1}(B,\underline{A})$ 
(at least under the assumption that $B$ is paracompact), 
where $\underline{A}$ denotes the sheaf of abelian groups on $B$ whose sections 
over an open set $U\subset B$ are the continuous maps from $U$ into $A$.  In this case 
Theorem~\ref{main result} can be interpreted as an isomorphism between $\check{H}^{n+1}(B,\underline{A})$ 
and $[B,B|A[n]|]$.  In the special case that $A$ is discrete, we recover the result of 
Huber \cite{Huber} giving an identification of $\check{H}^{n+1}(B,A)$ with $[B,K(A,n+1)]$.

We need to comment on the relationship of this paper with previous works of other authors.  
We are not aware of any analogous studies in the parametrized setting.  In the case where the 
simplicial group object $G$ is trivial, in the sense that it takes the form of a trivial bundle 
of groups $B\times G\to B$ for a simplicial topological group $G$, then the most general result related to 
ours that we are aware of is contained in the paper \cite{Segal2} of Graeme Segal.  In this paper 
Segal proves (Proposition 4.3 of \cite{Segal2}) that if $B$ is a 
paracompact space and $A$ is a good simplicial space then 
there is a bijection between $[B,|A|]$ and the set of {\em concordance classes} 
of $A$-bundles on $B$.  In \cite{BBK} a related version of Segal's 
theorem is proven, in which the set of homotopy classes 
$[B,|A|]$ is shown to be isomorphic to the set of concordance classes of 
$A$ valued cocycles on $B$, where $A$ is again a good simplicial space and 
$B$ is a space with the homotopy type of a CW complex.  
We caution the reader that there is an {\em a priori} difference between the concordance relation 
and the relation on simplicial maps given by simplicial homotopy.  For example, 
it is not true in general that the set of isomorphism classes of principal groupoid 
bundles is in a bijective correspondence with the set of concordance classes of 
principal groupoid bundles (see for instance \cite{MRS}).  It is of course 
well known that there is a bijection between the set of 
isomorphism classes and concordance classes of ordinary principal bundles with 
structure group a topological group.  
One outcome of our work 
is that the simplicial homotopy relation and the concordance relation agree when $A = \Wbar G$ 
for a simplicial topological group $G$ (under the conditions on $G$ described in Theorem~\ref{main result}).  

In another direction, again in the case of the trivial bundle of groups $G\times B\to B$, 
there is the work \cite{BGPT} on crossed complexes.  In this paper the authors 
show that there is a bijective correspondence between the set of homotopy classes of maps $[M,BC]$ for 
$M$ a filtered space and $C$ a crossed complex, and the set of equivalence classes of $C$-valued cocycles 
on $M$.  The equivalence relation considered in \cite{BGPT} arises from the structure of 
$M$ as a filtered space and it is not clear how this relates to the simplicial homotopy relation on 
simplicial maps.

Our proof of Theorem~\ref{main result} is a substantial generalization of the methods of \cite{Baez-St}.  
We relate the non abelian cohomology set $H(B,G)$ to the ordinary non abelian cohomology $\check{H}^1(B,|G|)$ by constructing 
a simplicial group $|\Dec\, G|$ in $\cK_{/B}$ together with homomorphisms $|G| \leftarrow |\Dec\, G|\to G$ 
which give us a means to compare $G$ and $|G|$ through the intermediate group $|\Dec\, G|$.  
The simplicial group $|\Dec\, G|$ is the fiberwise geometric realization of the bisimplicial group 
$\Dec\, G$ in $\cK_{/B}$ constructed from $G$ using Illusie's total d\'{e}calage functor \cite{Illusie2}.  The bisimplicial 
group $\Dec\, G$ also plays a prominent role in Porter's work \cite{Porter} on $n$-types.  By applying 
the classifying space functor $\Wbar$ to the diagram of groups above, we obtain a 
diagram 
\[
\xymatrix{ 
\Wbar |\Dec\, G| \ar[d] \ar[r] & \Wbar G \\ 
\Wbar |G| } 
\]
in $s\cK_{/B}$.  Our main technical result (Proposition~\ref{hypercover}) is that the map $\Wbar |\Dec\, G|\to \Wbar |G|$ 
is a hypercover; this in turn relies partly on a result from \cite{RS} (recalled as Theorem~\ref{main thm from RS} below).  
The diagram above should be regarded as a `universal $G$-torsor over $\Wbar |G|$';  we show  
that this diagram induces isomorphisms $H(B,|\Dec\, G|)\cong \check{H}^1(B,|G|)$ and $H(B,|\Dec\, G|)\cong H(B,G)$.  

In summary, the contents of the paper are as follows.  
In Section~\ref{sec:background} we describe some background material on parametrized spaces 
from \cite{May-Sig}, some simplicial techniques based around the d\'{e}calage comonad, 
and finally some material on generalized matching objects.  
In Section~\ref{sec:internal} we recall some background on the 
homotopy theory of simplicial sheaves and simplicial presheaves, 
and also study the notion of a locally fibrant 
simplicial object in a category equipped with a Grothendieck topology introduced in \cite{Henriques}.  
We also study the `internal' local simplicial homotopy theory corresponding 
to this notion of locally fibrant simplicial object.  A key result in this section is 
Proposition~\ref{contractibility propn} where we give a criterion to detect 
when certain homomorphisms of group objects are locally acyclic local fibrations.  
In Section~\ref{sec:nonabelian} we prove some results about the internal local 
simplicial homotopy theory developed in the previous section, and give a 
precise definition of the non-abelian cohomology sets $H(B,G)$.  Finally in 
Section~\ref{sec:proof of main thm} we give our proof of Theorem~\ref{main result}.

\section{Background} 
\label{sec:background} 

\subsection{Parametrized spaces} 

Let $\cK$ denote the category of $k$-spaces \cite{Vogt} 
and let $\cU$ denote the subcategory of compactly generated spaces (i.e.\ weakly Hausdorff $k$-spaces).  
We will be interested in the category $\cK_{/B}$ of spaces over $B$, where $B$ is an object of $\cU$.   
Recall from \cite{May-Sig} that $\cK_{/B}$ is a {\em topological bicomplete category}, 
in the sense that $\cK_{/B}$ is enriched over $\cK$, the underlying category is complete and 
cocomplete, and that it is tensored and cotensored over 
$\cK$.  For any space $K$ and space $X$ over $B$ the tensor $K\otimes X$ is 
defined to be the space $K\times X$ in $\cK$, considered as a space over $B$ via the obvious map $K\times X\to B$.  
Similarly, the cotensor $X^K$ is defined to be the space $\Map_B(K,X)$ given by the pullback diagram 
\[ 
\xymatrix{ 
\Map_B(K,X) \ar[r] \ar[d] & \Map(K,X) \ar[d] \\ 
B \ar[r] & \Map(K,B) } 
\] 
in $\cK$, where the map $B\to \Map(K,B)$ is the adjoint of $B\times K\to B$.  
Recall also (see \cite{May-Sig}) that $\cK_{/B}$ is cartesian closed  
under the fiberwise cartesian product $X\times_B Y$ and the fiberwise mapping space $\Map_B(X,Y)$ over $B$.  
The definition of the fiberwise mapping space $\Map_B(X,Y)$ 
is rather subtle and we will not give it here, we instead refer the reader to Definition~1.3.7 of \cite{May-Sig}. 
Let us note though that $\Map_B(X,Y)$ is generally not weak Hausdorff even if 
$X$ and $Y$ are, which is one of the main reasons why May and Sigurdsson choose to 
work with the category $\cK_{/B}$ rather than the category $\cU/B$.

In \cite{May-Sig} several model structures on $\cK_{/B}$ are introduced.  We shall be 
interested in the 
$f$-model structure (for fiberwise) with the weak equivalences, 
fibrations and cofibrations labelled accordingly.  
 Thus a map $g\colon X\to Y$ in $\cK_{/B}$ is called an $f$-{\em equivalence} 
 if it is a fiberwise homotopy equivalence.  This needs the notion of 
 homotopy over $B$, which is formulated in terms of 
$X\times_B I$.  
A map $g\colon X\to Y$ in $\cK_{/B}$ is called an $f$-{\em fibration} 
if it has the fiberwise covering homotopy property, i.e.\ if it has the 
right lifting property (RLP) with respect to all maps of the form 
$i_0\colon Z\to Z\times_B I$ for all $Z\in \cK_{/B}$.  A map $g\colon X\to Y$ 
in $\cK_{/B}$ is called an $\bar{f}$-{\em cofibration}  
if it has the left lifting property (LLP) with respect to all $f$-acyclic $f$-fibrations.  
We have the following result from \cite{May-Sig}.   
\begin{theorem}[May-Sigurdsson] 
\label{may sig model thm}
$\cK_{/B}$ has the structure of a proper, topological model category for which 
\begin{itemize} 
\item the weak equivalences are the $f$-equivalences, 

\item the fibrations are the $f$-fibrations, 

\item the cofibrations are the $\bar{f}$-cofibrations.   
\end{itemize} 
\end{theorem}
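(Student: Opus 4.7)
The plan is to imitate Strøm's theorem on the Hurewicz model structure on $\Top$, carried out fiberwise in $\cK/B$. Completeness and cocompleteness are already given. The 2-out-of-3 and retract axioms for $f$-equivalences are immediate from the corresponding properties of fiberwise homotopy equivalences, and one of the two lifting axioms is true by definition of $\bar{f}$-cofibration. So the genuine work lies in constructing the two factorizations and in showing that every $f$-acyclic $\bar{f}$-cofibration has the LLP with respect to every $f$-fibration.

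For the factorizations I would build the fiberwise mapping cocylinder $N_B g$ of a map $g\colon X\to Y$ as the pullback of $X\xrightarrow{g} Y \xleftarrow{\mathrm{ev}_0} Y^I$ in $\cK/B$, using the cotensor $Y^I = \Map_B(I,Y)$. The constant-path inclusion $X\to N_B g$ followed by $\mathrm{ev}_1$ gives the (acyclic cofibration, fibration) factorization: the first map is a strong fiberwise deformation retract inclusion (so an $f$-equivalence, and one checks it is an $\bar{f}$-cofibration), while the second has the RLP with respect to $i_0\colon Z\to Z\times_B I$ by a standard adjunction computation with $\Map_B(I,-)$. For the (cofibration, acyclic fibration) factorization I would use Strøm's explicit construction with a fiberwise mapping cylinder, or equivalently an iterative construction modeled on the generating cell $i_0\colon Z\to Z\times_B I$, each stage killing a lifting obstruction.

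The main obstacle, as in Strøm's original proof, is a fiberwise Dold-type theorem: every $f$-acyclic $\bar{f}$-cofibration is the inclusion of a strong fiberwise deformation retract. Granting this, such an inclusion automatically has the LLP with respect to all $f$-fibrations, which closes the lifting axioms. The proof has to upgrade an abstract fiberwise homotopy inverse together with the associated homotopies into the rigid form of a strong deformation retraction, by iterating the fiberwise homotopy extension property of $\bar{f}$-cofibrations and exploiting the good behavior of $\Map_B(I,-)$ on $\cK/B$. This is also the step where working in $\cK$ rather than $\cU$ is essential, since $\Map_B(I,-)$ is only guaranteed to be a $k$-space and the weak Hausdorff condition would otherwise obstruct the construction.

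Properness reduces to showing that pullbacks along $f$-fibrations and pushouts along $\bar{f}$-cofibrations preserve $f$-equivalences, both of which follow from the fiberwise HLP/HEP after the lifting axioms are in place. Finally, the topological (i.e.\ $\cK$-enriched) structure reduces to a pushout-product statement relating the tensor $K\otimes X = K\times X$, the cotensor $\Map_B(K,X)$, and the classes of $\bar{f}$-cofibrations and $f$-fibrations; this is straightforward from the definition of the $f$-model structure in terms of the generating maps $i_0\colon Z\to Z\times_B I$.
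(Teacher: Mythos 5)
The paper itself offers no proof of this statement: it is imported verbatim from May--Sigurdsson, who obtain it from Cole's general existence theorem for Hurewicz-type model structures on topologically bicomplete categories rather than from a fiberwise rerun of Str\o m's argument. Measured against the known proofs, your sketch has a genuine gap at its central step, the (acyclic cofibration, fibration) factorization.

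You assert that the constant-path inclusion $X\to N_Bg$ into the fiberwise mapping cocylinder is a strong fiberwise deformation retract (true) and that ``one checks it is an $\bar{f}$-cofibration'' (this is the whole difficulty, and it is false in general). Since $p_0\colon \Map_B(I,Z)\to Z$ is an $f$-acyclic $f$-fibration, every $\bar{f}$-cofibration is an $f$-cofibration, i.e.\ has the fiberwise HEP; but already for $B=\ast$ the inclusion of $X$ into the mapping path space of a map $X\to Y$ need not be a Hurewicz cofibration for general topological spaces, hence need not be an $\bar{f}$-cofibration. Nor does a strong deformation retract ``automatically'' have the LLP against $f$-fibrations: the standard argument (lift the deformation using the CHP) only makes the upper triangle commute up to vertical homotopy, and rectifying it needs the HEP --- this is Str\o m's lifting lemma, which uses both ingredients. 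Your fallback, ``an iterative construction modeled on the generating cell $i_0\colon Z\to Z\times_B I$, each stage killing a lifting obstruction,'' is also unavailable: $f$-fibrations are defined by the RLP against a proper class of cylinder inclusions, not a set, so there is no small object argument in this setting. Indeed, this is exactly the step where even the published route (Cole's iterated cocylinder construction, on which May--Sigurdsson rely) was later found to contain a gap, repaired by Barthel and Riehl using (fiberwise) Moore path spaces; Str\o m's original proof for $\Top$ likewise avoids the naive cocylinder factorization by a much more delicate argument. The rest of your outline (the mapping-cylinder factorization, which reduces to the acyclic one by composition and 2-out-of-3, properness, and the pushout-product/fiberwise NDR arguments for the topological axiom) is reasonable as a sketch, but the theorem stands or falls with the factorization you have taken for granted.
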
 

Recall that a model category is said to be {\em topological} if it 
is a $\cK$-model category in the sense of Definition 4.2.18 of \cite{Hovey}, 
for the monoidal model structure on $\cK$ given by the classical 
Str\o m model structure \cite{Strom} on $\cK$.  

In addition to the notion of $\bar{f}$-cofibration, 
there is also the notion of an $f$-{\em cofibration}: this is a map 
$g\colon X\to Y$ which satisfies the LLP with respect to all maps of the form 
$p_0\colon \Map_B(I,Z)\to Z$ for some $Z\in \cK_{/B}$.  
Every $\bar{f}$-cofibration $g\colon X\to Y$ in $\cK_{/B}$ is an 
$f$-cofibration.  The converse is not true in general.  However May 
and Sigurdsson prove (see Theorems 4.4.4 and 5.2.8 of \cite{May-Sig}) 
that if $g\colon X\to Y$ is a closed $f$-cofibration then $g$ is an $\bar{f}$-cofibration.

Moreover, in analogy with the standard characterization of 
closed Hurewicz cofibrations in terms of NDR pairs, May and 
Sigurdsson give a criterion (see Lemma 5.2.4 of \cite{May-Sig}) 
which detects when a closed inclusion $i\colon A\to X$ in 
$\cK_{/B}$ is an $\bar{f}$-cofibration.  
Such an inclusion is an 
$\bar{f}$-cofibration if and only if $(X,A)$ is a 
{\em fiberwise NDR pair} in the sense that there is a map 
$u\colon X\to I$ for which $A = u^{-1}(0)$ and a homotopy $h\colon X\times_B I\to X$ 
over $B$ such that $h_0 = \mathit{id}$, $h_t|_A = \mathit{id}_A$ 
for all $0\leq t\leq 1$ and $h_1(x)\in A$ whenever $u(x) < 1$

The analogue of a pointed space in this parametrized context is the notion of an {\em ex-space}.  An ex-space over 
$B$ is a space $X$ in $\cK_{/B}$ together with a section of the structure map $X\to B$.  The category of ex-spaces and maps 
between them is denoted by $\cK_B$.  An ex-space $X$ is said to be {\em well-sectioned} if 
the section $B\to X$ is an $\bar{f}$-cofibration.  A prime example of an ex-space is a parametrized 
group, i.e.\ a group object in $\cK_{/B}$ --- in this case the identity section gives the required 
section of the structure map.

\subsection{Some simplicial techniques} 

Recall (see for example VII Section 5 of \cite{CWM}) that the 
augmented simplex category $\Delta_a$ is a monoidal category under the operation 
of {\em ordinal sum} denoted here by $\sigma([m], [n])$, with unit given by 
the empty set $[-1]$.  Moreover $[0]$ is a monoid in $\Delta_a$, with multiplication 
given by the unique map $[1]\to [0]$,  which is universal in a certain precise sense (see VII Section 5 Proposition 1 
of \cite{CWM}).  It follows that $[0]$ determines a comonoid in $\Delta_a^\op$ and hence a comonad 
\[
\sigma(-, [0])\colon \Delta^\op\to \Delta^\op
\]
on the opposite category of the simplex category $\Delta$.  This comonad induces through composition a comonad 
on the functor category $s\cC = [\Delta^\op,\cC]$ of simplicial objects in $\cC$, for any category $\cC$.  This comonad is 
denoted 
\[
\Dec_0\colon s\cC\to s\cC
\]
and if $X$ is an object of $s\cC$, then $\Dec_0X$ is called the {\em d\'{e}calage} of $X$.  
It is not hard to see that 
$\Dec_0X$ is an augmented simplicial object whose object of $n$-simplices is 
\[ 
(\Dec_0X)_n = X_{n+1},  
\] 
whose face maps $d_i\colon (\Dec_0 X)_n\to 
(\Dec_0 X)_{n-1}$ are given by $d_i\colon X_{n+1}\to X_n$ for 
$i=0,1,\ldots, n$, and whose degeneracy maps 
$s_i\colon (\Dec_0 X)_n\to (\Dec_0 X)_{n+1}$ are given by 
$s_i\colon X_{n+1}\to X_{n+2}$ for $i=0,1,\ldots, n$.  
The augmentation $(\Dec_0 X)_0\to X_0$ is given 
by $d_0\colon X_1\to X_0$.  We can picture $\Dec_0X$ as follows:
\[ 
\xymatrix{ 
X_0 & \ar[l]_-{d_0} X_1 \ar@<-2.5ex>[r]^-{s_0} & X_2 
\ar@<-2.5ex>[l]_-{d_0} \ar[l]_-{d_1} \ar@<-2.5ex>[r]^-{s_0} \ar@<-5ex>[r]^-{s_1} 
& X_3\  \ar@<-5ex>[l]_-{d_0} \ar@<-2.5ex>[l]_-{d_1} \ar[l]_-{d_2}  \cdots } 
\]   
Thus $\Dec_0X$ is obtained from $X$ by `stripping off' the last face and degeneracy map at each 
level and re-indexing by shifting degrees up by one.  If $\cC$ is 
cocomplete then the leftover degeneracy map at each level can 
be used to define a contraction of $\Dec_0X$ onto $X_0$.  More precisely there is a simplicial homotopy 
$h\colon \Dec_0 X\otimes \Delta^1\to \Dec_0X$ which fits into a commutative diagram 
\[
\xymatrix{ 
\Dec_0 X \ar[d] \ar[dr]^-{s_0d_0} & \\ 
\Dec_0 X\otimes \Delta^1 \ar[r]^-h & \Dec_0 X \\ 
\Dec_0 X \ar[u] \ar[ur]_-{\mathrm{id}} } 
\]
Here the tensor $\Dec_0 X\otimes \Delta^1$ is defined with respect to the usual simplicial structure on 
$s\cC$: thus if $A$ is an object of $s\cC$ and $K$ is a simplicial set then $A\otimes K$ is the simplicial object 
of $\cC$ whose object of $n$-simplices is   
\[
(A\otimes K)_n = \coprod_{k\in K_n} A_n, 
\]
see for example page 85 of \cite{GJ}.  As mentioned above, the functor $\Dec_0$ is a comonad on $s\cC$, whose counit 
is the simplicial map $\Dec_0 X\to X$ given in degree $n$ by the last face map, i.e.\ 
the map  
\[
d_{n+1}\colon X_{n+1}\to X_n.  
\]
We will write $d_\last\colon \Dec_0 X\to X$ for this map.    
Observe that since the functor $\Dec_0\colon s\cC\to s\cC$ is obtained by restriction along the functor 
$\sigma(-, [0])\colon \Delta^\op\to \Delta^\op$ it has both a left adjoint and a right adjoint --- we give a description 
of the left adjoint in Corollary~\ref{left adjoint of Dec_0} below.  

The comonad $\Dec_0$ gives rise to a simplicial resolution of any object $X\in s\cC$, in other 
words it gives rise to a bisimplicial object in $\cC$ which, when thought of as a simplicial 
object in $s\cC$, has as its object of $n$-simplices the simplicial object $\Dec_nX := (\Dec_0)^{n+1}X$.  
This bisimplicial object is denoted by 
$\Dec\, X$.  Its object of $(m,n)$-bisimplices is $X_{m+n+1}$.  Clearly this construction extends 
to define a functor 
\begin{equation}
\label{total dec}
\Dec\colon s\cC\to ss\cC.  
\end{equation}
This functor can also be described as 
restriction along the opposite 
functor of the ordinal sum map $\sigma\colon \Delta\times \Delta\to \Delta$.   
The bisimplicial object $\Dec\, X$ was introduced by Illusie in \cite{Illusie2} and is called the 
{\em total d\'{e}calage} of $X$.  For more details we refer to \cite{CR,Ste1}.

When $\cC$ has sufficiently many limits and colimits, the functor 
$\sigma^* = \Dec\colon s\cC\to ss\cC$ has both a left adjoint $\sigma_!$ and a right adjoint $\sigma_*$.  
We will later need some elementary facts about the left adjoint of $\Dec$ when $\cC = \Set$.  
In this case the left adjoint of $\Dec$ is closely related to the join operation 
on simplicial sets (see \cite{Ehlers-Porter,Joyal,Lurie}).  
To see this first observe that ordinal sum induces a map $\sigma_a^*\colon s\Set_a\to ss\Set_a$, 
where $s\Set_a = [\Delta_a^\op,\Set]$ denotes the category of augmented simplicial sets and 
$ss\Set_a = [\Delta_a^\op\times \Delta_a^\op,\Set]$ 
denotes the category of biaugmented bisimplicial sets, and that we have a commutative diagram 
\[
\xymatrix{ 
s\Set_a \ar[d]_-{i^*} \ar[r]^-{\sigma_a^*} & 
ss\Set_a \ar[d]^-{(i\times i)^*}                            \\ 
s\Set \ar[r]_-{\sigma^*} & 
ss\Set.  } 
\]
Here $i^*$ and $(i\times i)^*$ denote the functors induced by the inclusions 
$i\colon \Delta\subset \Delta_a$ and $i\times i\colon \Delta\times \Delta\subset \Delta_a
\times \Delta_a$ respectively.  
Let $i_!$ and $(i\times i)_!$ denote the functors left adjoint to $i^*$ and $(i\times i)^*$ 
respectively.  Thus if $S$ is a simplicial set then $i_!(S)$ is the augmented simplicial 
set with $i_!S(-1) = \pi_0(S)$  and similarly for 
$(i\times i)_!$.   
It follows that we have a natural isomorphism of functors 
\[
i_!\sigma_! = (\sigma_a)_! (i\times i)_!
\]
and hence 
\[
\sigma_! = i^*(\sigma_a)_!(i\times i)_!,  
\]
where $(\sigma_a)_!$ denotes the left adjoint of the functor $\sigma_a^*\colon s\Set_a\to ss\Set_a$ 
induced by restriction along $\sigma_a\colon \Delta_a\times \Delta_a\to \Delta_a$.  
Hence we have the following lemma.  
\begin{lemma}
\label{left adjoint of Dec and joins}
For any connected simplicial sets $K$ and $L$ we have the identity 
\[
\sigma_!(K\Box L) = K\star L, 
\]
natural in $K$ and $L$, where $K\star L$ denotes the join of $K$ and $L$.  
\end{lemma}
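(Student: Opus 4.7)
The natural approach is to exploit the identity $\sigma_! = i^{*}(\sigma_a)_{!}(i\times i)_{!}$ derived just before the lemma, reducing the computation of $\sigma_!(K\Box L)$ to an analogous computation one level up, in the category of augmented (bi)simplicial sets, where $\sigma_a$ genuinely equips $\Delta_a$ with a monoidal structure and the join admits a uniform description. I will carry this out in three short steps, and then specialise using the connectedness hypothesis at the very end.

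First, I would verify the easy compatibility $(i\times i)_{!}(K\Box L) = i_{!}K \Box i_{!}L$. This follows directly from the fact that $i_{!}$ augments by $\pi_{0}$ and that $\pi_{0}$ of a product of sets is the product of $\pi_{0}$'s, so both sides agree level-wise. Next, I would prove that for augmented simplicial sets $X$ and $Y$ one has $(\sigma_a)_{!}(X\Box Y) = X \star_a Y$, where $\star_a$ is the augmented join defined by
\[
(X\star_a Y)_n \;=\; \coprod_{\substack{i+j+1=n\\ i,j\ge -1}} X_i \times Y_j.
\]
Since both $(\sigma_a)_{!}$ and $\star_a$ preserve colimits in each variable, it suffices to check this on representables, and by the coend formula and Yoneda one reduces to the identity $\sigma_a([p],[q]) = [p+q+1]$, which yields $(\sigma_a)_{!}(\Delta^p_a \Box \Delta^q_a) = \Delta^{p+q+1}_a = \Delta^p_a \star_a \Delta^q_a$.

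Finally, I would compute $i^{*}(i_{!}K \star_a i_{!}L)$ using the explicit formula above. For $n\ge 0$, the sum splits into the terms with $(i,j)=(n,-1)$, $(i,j)=(-1,n)$, and those with $i,j\ge 0$, giving
\[
\bigl(i_{!}K \star_a i_{!}L\bigr)_n \;=\; K_n\times \pi_0(L)\;\sqcup\;\pi_0(K)\times L_n\;\sqcup\;\coprod_{\substack{i+j=n-1\\ i,j\ge 0}} K_i\times L_j.
\]
This is precisely where connectedness is used: when $\pi_0(K) = \pi_0(L) = *$ the first two summands collapse to $K_n$ and $L_n$, recovering the standard formula for $(K\star L)_n$. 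Combining the three steps, and observing that every identification is natural in $K$ and $L$, yields $\sigma_!(K\Box L) = K\star L$.

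The main obstacle is the middle step, identifying $(\sigma_a)_{!}$ applied to a box product with the augmented join; this is essentially the statement that $\star_a$ is the Day convolution of $\sigma_a$ under the Yoneda embedding, and it requires careful bookkeeping of the coend
\[
\int^{([r],[s])\in \Delta_a\times\Delta_a} \Delta_a([n],[r+s+1])\times \Delta_a([r],[p])\times \Delta_a([s],[q])
\]
to see it reduces to $\Delta_a([n],[p+q+1])$ via two applications of co-Yoneda. Once this is in hand, the connectedness hypothesis enters transparently in the final restriction $i^{*}$.
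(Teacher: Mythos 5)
Your proposal is correct and follows essentially the same route as the paper: both factor $\sigma_!$ as $i^*(\sigma_a)_!(i\times i)_!$, reduce to the augmented setting via $(i\times i)_!(K\Box L)=i_!(K)\Box i_!(L)$, and use connectedness to handle the $\pi_0$-augmentation. The only real difference is that the paper uses connectedness to replace $i_!$ by $i_*$ and then cites Joyal's definition of the join as $i^*(\sigma_a)_!(i_*(K)\Box i_*(L))$, whereas you prove the identification $(\sigma_a)_!(X\Box Y)\cong X\star_a Y$ by Day-convolution/co-Yoneda bookkeeping and apply connectedness at the final restriction, so yours is a more self-contained rendering of the same argument.
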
 

Here $K\Box L$ denotes the {\em box product} of the simplicial sets 
$K$ and $L$: recall that the box product is the functor $\Box\colon s\Set\times s\Set\to 
ss\Set$ which sends $(K,L)$ to the bisimplicial set whose set of $(m,n)$-bisimplices 
is $K_m\times L_n$.  Thus $\Delta^m\Box \Delta^n$ is the classifying bisimplex 
$\Delta^{m,n}$ for example.  Note that $\Box$ extends canonically to a functor 
$\Box\colon s\Set_a\times s\Set_a\to ss\Set_a$.    

\begin{proof} 
Observe that $(i\times i)_!(K\Box L) = i_!(K)\Box i_!(L)$.  Since $K$ and $L$ are connected 
we have $(i\times i)_!(K\Box L) = i_*(K)\Box i_*(L)$, where $i_*$ denotes the functor 
right adjoint to $i^*$, so that $i_*(S)([-1])= 1$ for any simplicial set $S$.  Therefore 
\[
\sigma_!(K\Box L) = i^*(\sigma_a)_!(i_*(K)\Box i_*(L)), 
\]
which is by definition equal to the join $K\star L$ (see \cite{Joyal}).  
\end{proof}

As a corollary we have the following result.  
\begin{corollary} 
\label{left adjoint of Dec_0}
The functor $C = \sigma_!((-)\Box \Delta^0)\colon s\Set\to s\Set$ is left adjoint 
to the functor $\Dec_0$.  In particular when $X$ is a connected simplicial set we have 
\[
CX = \sigma_!(X\Box \Delta^0) = X\star \Delta^0.  
\]
\end{corollary}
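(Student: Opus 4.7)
The plan is to factor $\Dec_0$ as a composition of two right adjoints and then recognise $C$ as the composite left adjoint.

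First I would identify $\Dec_0 Y$ with the column $(\Dec Y)_{\bullet, 0}$ of the total d\'{e}calage. Indeed $(\Dec Y)_{m, 0} = Y_{m+0+1} = (\Dec_0 Y)_m$, and because ordinal sum identifies $[m]$ with the first $m+1$ elements of $[m+n+1]$, the horizontal face and degeneracy operators of $\Dec Y$ at vertical level $0$ are exactly the face and degeneracy maps of $\Dec_0 Y$ described in the paper. Phrased functorially, this says $\Dec_0 = j^* \circ \Dec$, where $j\colon \Delta\to \Delta\times \Delta$ is the functor $[m]\mapsto ([m], [0])$.

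Next I would verify that the evaluation functor $j^*\colon Z\mapsto Z_{\bullet, 0}$ has $(-)\Box \Delta^0$ as its left adjoint. For any simplicial set $X$ and bisimplicial set $Z$, every bisimplicial map $X\Box \Delta^0\to Z$ must be constant in the vertical direction (because all vertical simplicial operators on $X\Box \Delta^0$ are identities of $X_m$), so it is determined by its restriction at vertical level $0$; this gives a natural bijection
\[
\Hom_{ss\Set}(X\Box \Delta^0, Z) = \Hom_{s\Set}(X, Z_{\bullet, 0}).
\]
Stacking this adjunction on top of $\sigma_! \dashv \sigma^* = \Dec$ then yields the desired adjunction $C = \sigma_!((-)\Box \Delta^0) \dashv \Dec_0$.

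The second assertion follows immediately from the previous lemma on joins: when $X$ is connected, applying it with $K = X$ and $L = \Delta^0$ (both connected) gives $C(X) = \sigma_!(X\Box \Delta^0) = X \star \Delta^0$. There is no genuine obstacle here; the whole argument comes down to the observation that $\Dec_0$ is a single column of $\Dec$, so that two adjunctions already in hand can simply be composed.
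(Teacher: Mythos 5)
Your argument is correct and is essentially the paper's own proof: the paper likewise obtains the adjunction by composing $\sigma_!\dashv \Dec$ with the adjunction between $(-)\Box\Delta^0$ and the first-row functor $Z\mapsto Z_{\bullet,0}$ (identifying the first row of $\Dec Y$ with $\Dec_0 Y$), and then quotes Lemma~\ref{left adjoint of Dec and joins} for the join formula. The only difference is that you verify the $(-)\Box\Delta^0\dashv(\text{first row})$ adjunction directly, whereas the paper cites Joyal--Tierney for it; this is a presentational, not a mathematical, difference.
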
 
Thus when $X$ is connected, $CX$ is the {\em cone} construction on $X$ 
(see Section III.5 of \cite{GJ}).  
\begin{proof} 
Let $X$ be a simplicial set.  Then for any simplicial set $Y$, there is a sequence of natural 
isomorphisms 
\[ 
s\Set(\sigma_!(X\Box \Delta^0),Y) = ss\Set(X\Box \Delta^0,\Dec Y) = s\Set(X,(\Dec Y)/\Delta^0), 
\]
where we recall \cite{JT} that the functor $(-)\Box \Delta^0\colon s\Set\to ss\Set$ is right adjoint 
to the functor $(-)/\Delta^0\colon ss\Set\to s\Set$ which sends a bisimplicial set $S$ to its 
first row.  Hence there is an isomorphism 
\[
s\Set(\sigma_!(X\Box \Delta^0),Y) = s\Set(X,\Dec_0 Y), 
\]
natural in $X$ and $Y$, which proves the result.  
\end{proof} 
  
\subsection{Generalities on matching objects} 
  
Recall (see VII Proposition 1.21 of \cite{GJ}) that if $X$ is a simplicial object in a 
complete category $\cC$ and $K$ is a simplicial set, then 
we can define an object $M_KX$, a {\em generalized matching object} of $X$, by the formula 
\[
M_KX = \varprojlim_{\Delta^n\to K}X_n.  
\]
In other words $M_KX$ is the limit of the diagram $(\Delta/K)^\op\to \Delta^\op\stackrel{X}{\to} \cC$ 
in $\cC$ on the opposite of the simplex category $\Delta/K$ of $K$.  
This process is functorial in $X$ and so defines a functor $M_K\colon s\cC\to \cC$.  It turns out 
that this functor is right adjoint to the functor $\cC\to s\cC$ which sends an object $A$ of $\cC$ to the simplicial 
object $A\otimes K$.  Hence the functor $M_K$ preserves limits, and therefore sends groups in $s\cC$ to 
groups in $\cC$.  

If we fix a simplicial object $X$, then we obtain a functor $\sSet^\op\to \cC$ which 
sends a simplicial set $K$ to the generalized matching object $M_KX$.  It turns out that this functor is also a 
right adjoint, and hence preserves limits.

We would like to compare the generalized matching objects $M^B_KX$ in $\cC_{/B}$ and 
$M_KX$ in $\cC$ for $X$ an object of 
$\cC_{/B}$.  To do this we first need to recall how limits in $\cC_{/B}$ are constructed.  Recall from \cite{May-Sig} that 
products in $\cC_{/B}$ are constructed as fiber products and that pullbacks 
are constructed by first forming the ordinary pullback in $\cC$, and then equipping it with the 
canonical map to $B$.  More generally, if $X\colon I\to \cC_{/B}$ is a diagram in $\cC_{/B}$ and 
$U(X)\colon I\to \cC$ denotes the underlying diagram in $\cC$, then $\varprojlim_{i\in I}X_i$ is 
defined by the following pullback diagram in $\cC$: 
\[
\xymatrix{ 
\varprojlim_{i\in I}X_i \ar[d] \ar[r] & 
\varprojlim_{i\in I}U(X_i) \ar[d] \\ 
B \ar[r] & \varprojlim_{i\in I} B, }
\]
where $B$ is regarded as a constant diagram on $I$.  
It follows that if $X$ is a simplicial object of $\cC_{/B}$ then 
$M^B_KX$ fits into a pullback diagram 
\begin{equation} 
\label{gen matching object pullback}
\begin{xy} 
(-10,7.5)*+{M^B_KX}="1"; 
(-10,-7.5)*+{B}="2"; 
(10,7.5)*+{M_KX}="3"; 
(10,-7.5)*+{M_KB}="4"; 
{\ar "1";"3"}; 
{\ar "1";"2"}; 
{\ar "2";"4"};
{\ar "3";"4"};
\end{xy}
\end{equation}
where we think of $B$ as a constant simplicial object in $\cC$. 
We have the following well known lemma (see for instance \cite{DHI} Lemma 4.2).
\begin{lemma}
\label{gen matching objects in C/B}
Let $X$ be an object of $\cC_{/B}$.  Then we have 
\[
M^B_{\Lambda^n_k} X = M_{\Lambda^n_k}X 
\]
for all $0\leq k\leq n$ and all $n\geq 1$.  Similarly we have 
\[
M^B_{\partial\Delta^n}X = M_{\partial\Delta^n}X    
\]
for all $n\geq 2$.  
\end{lemma}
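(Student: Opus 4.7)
The plan is to reduce the claim to the following assertion: whenever $K$ is a non-empty, connected simplicial set, the right-hand vertical map $B \to M_K B$ in the pullback square~\eqref{gen matching object pullback} is an isomorphism. Once this is known, pulling back immediately forces the top horizontal map $M^B_K X \to M_K X$ to be an isomorphism as well, which is exactly the identity asserted in the lemma for the two specified families of $K$.

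The key observation is that, because $B$ is regarded as a constant simplicial object of $\cC$, the matching object $M_K B = \varprojlim_{\Delta^n \to K} B$ is the limit over $(\Delta/K)^\op$ of a constant diagram with value $B$. Using completeness of $\cC$, this limit is canonically a product of copies of $B$ indexed by the set $\pi_0(\Delta/K)$ of connected components of $\Delta/K$, and the structural map $B \to M_K B$ is the diagonal into this product. Hence $B \to M_K B$ is an isomorphism precisely when $\Delta/K$ is non-empty and connected. To translate this into a condition on $K$ itself, I would invoke the standard identification $\pi_0(\Delta/K) = \pi_0(K)$, which follows from the colimit presentation $K = \colim_{\Delta^n \to K} \Delta^n$ together with the fact that $\pi_0 \colon \sSet \to \Set$ preserves colimits and $\pi_0(\Delta^n) = *$.

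It then remains to check connectedness in the two cases. For $n \geq 1$, the horn $\Lambda^n_k$ contains all vertices of $\Delta^n$ and at least one $(n-1)$-dimensional face, so it is non-empty and connected (for $n=1$ it is a single vertex). For $n \geq 2$, the boundary $\partial\Delta^n$ contains every edge $[i,j]$ of $\Delta^n$, since each such edge lies in some codimension-one face; hence its $1$-skeleton is the complete graph on $n+1$ vertices and $\partial\Delta^n$ is non-empty and connected. In both cases the diagonal $B \to M_K B$ is an isomorphism, and pulling back yields $M^B_K X = M_K X$ as required.

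There is no real obstacle here; the argument is essentially bookkeeping built around the pullback square~\eqref{gen matching object pullback}. The only point requiring any vigilance is checking that the dimensional restrictions $n \geq 1$ and $n \geq 2$ are exactly what is needed to guarantee connectedness of $\Lambda^n_k$ and $\partial\Delta^n$ respectively, since otherwise (e.g.\ for $\partial\Delta^1 = \Delta^{\{0\}} \sqcup \Delta^{\{1\}}$) the matching object $M_K B$ would split as a non-trivial product of copies of $B$ and the conclusion would fail.
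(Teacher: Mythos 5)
Your proposal is correct and takes essentially the same route as the paper: both reduce, via the pullback square~\eqref{gen matching object pullback}, to showing that the diagonal $B\to M_KB$ is an isomorphism for $K=\Lambda^n_k$ ($n\geq 1$) and $K=\partial\Delta^n$ ($n\geq 2$), i.e.\ that the simplex category $\Delta/K$ is non-empty and connected. The only difference is cosmetic: you justify connectedness of $\Delta/K$ via $\pi_0(\Delta/K)\cong\pi_0(K)$ and a direct combinatorial check on horns and boundaries, while the paper invokes the weak equivalence $N(\Delta\downarrow K)\simeq K$ and connectedness of $|\Lambda^n_k|$ and $|\partial\Delta^n|$.
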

\begin{proof}
Since the diagram~\eqref{gen matching object pullback} is a pullback, it is enough to prove that the 
maps $B\to M_{\Lambda^n_k}B$ and $B\to M_{\partial\Delta^n}B$ are isomorphisms, where $B$ is regarded 
as a constant simplicial object in $\cC$.  Therefore, it is enough to prove that the simplex categories 
$\Delta\downarrow\Lambda^n_k$ and $\Delta\downarrow\partial\Delta^n$ are connected.  This follows 
from the fact that the spaces $|\Lambda^n_k|$ and $|\partial\Delta^n|$ are connected, since 
$N(\Delta\downarrow \Lambda^n_k)$ and $N(\Delta\downarrow \partial\Delta^n)$ 
are weakly equivalent to $\Lambda^n_k$ and $\partial\Delta^n$ respectively.      
\end{proof}

The notion of generalized matching object has an analog for bisimplicial objects in $\cC$.  If $K$ is a 
bisimplicial set and $X$ is a bisimplicial object of $\cC$, then the generalized matching object 
$M_KX$ is defined by an analogous limit formula in $\cC$ to that above:
\[
M_KX = \varprojlim_{\Delta^{m,n}\to K} X_{m,n}.
\]
For a fixed bisimplicial set $K$ this defines a functor $M_K\colon ss\cC\to \cC$.  Note that if $K$ and $L$ 
are simplicial sets and $X$ is a bisimplicial object of $\cC$ then we have an isomorphism 
\begin{equation} 
\label{external product and matching objects}
M_{K\Box L} X = M_KM_LX 
\end{equation}
where $M_LX$ denotes the matching object of the simplicial object $X$ of $s\cC$.   

We have the following 
result.  
\begin{lemma} 
\label{adjoint for bisimplicial matching}
Let $K$ be a bisimplicial set.  The functor $M_K\colon ss\cC\to \cC$ has a left adjoint $(-)\otimes K\colon \cC\to ss\cC$.  
\end{lemma}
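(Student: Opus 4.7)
The plan is to construct the left adjoint explicitly by reducing to the representable case and then assembling via the canonical colimit presentation of an arbitrary bisimplicial set; throughout I tacitly assume $\cC$ is cocomplete so that the colimits in $ss\cC$ below exist. For a representable $K = \Delta^{p,q}$, the simplex category $(\Delta \times \Delta)/\Delta^{p,q}$ has a terminal object (the identity), so $M_{\Delta^{p,q}} X = X_{p,q}$, i.e.\ the matching functor is just evaluation at $(p,q)$. The evident left adjoint is the ``free bisimplicial object on $A$ concentrated at $(p,q)$'', namely the bisimplicial object $A \otimes \Delta^{p,q}$ defined by
\[
(A \otimes \Delta^{p,q})_{m,n} \;=\; \coprod_{[m]\to[p],\; [n]\to[q]} A,
\]
with face and degeneracy maps induced from those of $\Delta\times\Delta$; the identity $ss\cC(A \otimes \Delta^{p,q}, X) = \cC(A, X_{p,q})$ is then immediate from the universal property of the coproduct together with Yoneda.

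For general $K$, I would use the canonical presentation as a colimit of its bisimplices, $K = \colim_{\Delta^{p,q} \to K} \Delta^{p,q}$, indexed by the simplex category $(\Delta \times \Delta)/K$, which is exactly the opposite of the indexing category for $M_K$. Set
\[
A \otimes K \;:=\; \colim_{\Delta^{p,q}\to K}\bigl(A \otimes \Delta^{p,q}\bigr)
\]
in $ss\cC$. The adjunction then unwinds by a three-step formal computation: for any $X \in ss\cC$,
\[
ss\cC(A \otimes K, X) \;=\; \lim_{\Delta^{p,q}\to K} ss\cC(A \otimes \Delta^{p,q}, X) \;=\; \lim_{\Delta^{p,q}\to K} \cC(A, X_{p,q}) \;=\; \cC(A, M_K X),
\]
using successively the universal property of the colimit defining $A \otimes K$, the representable adjunction, and the fact that $\cC(A, -)$ preserves limits.

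There is no genuine obstacle here; the content of the lemma is essentially formal once the representable case is identified. The only subtlety worth flagging is the matchup of variances: the diagram indexing the colimit defining $A \otimes K$ is literally opposite the diagram indexing the limit defining $M_K X$, which is precisely what makes the chain of universal properties click. As an alternative organization, one can factor through the already-recorded simplicial case using~\eqref{external product and matching objects}: for a box product $K = J \Box L$ the matching functor factors as $M_J \circ M_L$, so its left adjoint is the composite of the two simplicial tensor functors, and the general case follows by writing $K$ as a colimit of representables $\Delta^p \Box \Delta^q$. Either route gives the same functor up to canonical isomorphism.
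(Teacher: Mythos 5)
Your construction is correct and is essentially the paper's: the left adjoint is the same functor (degreewise it is the coproduct $\coprod_{K_{m,n}}A$, which your colimit-of-representables formula recovers), and the adjunction is a formal consequence of the variance match between the colimit presentation of $K$ and the limit defining $M_K$. The only difference is that you carry out the verification explicitly via the density argument, whereas the paper simply records the formula and refers to the analogous proof of VII Proposition 1.21 in \cite{GJ}.
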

\begin{proof}
$(-)\otimes K\colon \cC\to ss\cC$ is the functor whose value on an object $X$ of $\cC$ is the bisimplicial object 
$(X\otimes K)$ whose object of $(m,n)$-bisimplices is 
\[
(X\otimes K)_{m,n} = \coprod_{k\in K_{m,n}} X.
\]
A map $\a\colon ([m],[n])\to ([p],[q])$ induces a map $\a^*\colon (X\otimes K)_{p,q}\to (X\otimes K)_{m,n}$ in the 
obvious fashion.  The proof that this functor is left adjoint to $M_K$ is   
entirely analogous to the proof of VII Proposition 1.21 of \cite{GJ}, to which we 
refer the reader for details.  
\end{proof}
The following proposition describes a useful property of 
matching objects for bisimplicial objects that we shall put to use 
in Section~\ref{Proof of first Proposition}.  
\begin{proposition}
\label{bisimplicial matching and Dec}
Let $X$ be a simplicial object in $\cC$ and let $K$ be a bisimplicial set.  Then there is an isomorphism 
\[
M_{\sigma_!K}X \cong M_K \Dec\, X, 
\]
natural in $X$ and $K$, where $\sigma_!\colon ss\Set\to s\Set$ denotes the left adjoint to $\Dec\colon s\Set\to ss\Set$.  
In particular when $K$ and $L$ are connected simplicial sets, then we have an isomorphism 
\[
M_{K\Box L}\Dec\, X\cong M_{K\star L}X, 
\]
natural in $K$, $L$ and $X$, where $K\star L$ denotes the join of $K$ and $L$.    
\end{proposition}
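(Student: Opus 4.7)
The plan is to derive the isomorphism by chaining the adjunctions already established in the excerpt and then invoking the Yoneda lemma. Fix $X \in s\cC$; for any object $A \in \cC$, I will produce a natural bijection
\[
\cC(A, M_{\sigma_! K} X) \;\cong\; \cC(A, M_K \Dec\, X).
\]

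Using Lemma~\ref{adjoint for bisimplicial matching} on the bisimplicial side and its simplicial analogue (VII Proposition 1.21 of \cite{GJ}), together with the adjunction $\sigma_! \dashv \Dec$ recorded just after~\eqref{total dec}, the two hom sets can respectively be rewritten as
\begin{align*}
\cC(A, M_{\sigma_! K} X) &\;\cong\; s\cC(A \otimes \sigma_! K,\, X), \\
\cC(A, M_K \Dec\, X) &\;\cong\; ss\cC(A \otimes K,\, \Dec\, X) \;\cong\; s\cC(\sigma_!(A \otimes K),\, X).
\end{align*}
Hence the whole proposition reduces to producing a natural isomorphism of simplicial objects $\sigma_!(A \otimes K) \;\cong\; A \otimes \sigma_! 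K$ in $s\cC$.

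For this compatibility I would use that the copower $(-) \cdot A \colon \Set \to \cC$, $S \mapsto \coprod_S A$, is left adjoint to $\cC(A,-)$ and hence preserves colimits, while $\sigma_!$, being a left adjoint, is computed by a standard coend formula
\[
(\sigma_! K)_n \;\cong\; \int^{([p],[q]) \in \Delta \times \Delta} \Delta([n],[p+q+1]) \times K_{p,q}.
\]
Writing $(A \otimes K)_{p,q} = K_{p,q} \cdot A$ and passing the copower past the coend yields
\[
(\sigma_!(A \otimes K))_n \;\cong\; \left(\int^{([p],[q])} \Delta([n],[p+q+1]) \times K_{p,q}\right) \cdot A \;\cong\; (\sigma_! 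K)_n \cdot A \;\cong\; (A \otimes \sigma_! K)_n,
\]
naturally in $[n]$, and this assembles into the required isomorphism of simplicial objects of $\cC$.

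Chaining the displayed isomorphisms gives a natural isomorphism of functors $\cC(-, M_{\sigma_! K} X) \cong \cC(-, M_K \Dec\, X)$, whence the Yoneda lemma delivers the first claim. For the second claim, when $K$ and $L$ are connected, Lemma~\ref{left adjoint of Dec and joins} gives $\sigma_!(K \Box L) \cong K \star L$, so applying the first part to the bisimplicial set $K \Box L$ immediately yields $M_{K \Box L} \Dec\, X \cong M_{K \star L} X$. No step presents a serious obstacle; the only point requiring any care is the commutation of the copower $(-) \cdot A$ with $\sigma_!$, which is a routine consequence of both being left adjoints.
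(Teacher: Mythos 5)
Your proposal is correct and follows essentially the same route as the paper: both chain the adjunctions $(-)\otimes K \dashv M_K$ and $\sigma_!\dashv \Dec$ to reduce everything to the natural isomorphism $\sigma_!(A\otimes K)\cong A\otimes\sigma_!K$, verify that by a colimit/coend interchange using the Kan-extension formula for $\sigma_!$, and conclude by Yoneda (the paper phrases this as the two objects having the same universal property), with the join statement then following from Lemma~\ref{left adjoint of Dec and joins} exactly as you do.
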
  
As an immediate corollary to this Proposition we have the following result. 
\begin{corollary} 
\label{cor:cones and matching objects}
Let $X$ be a simplicial object in $\cC$ and let $K$ be a simplicial set.  Then there is an isomorphism 
\[
M_K\Dec_0X\cong M_{CK}X, 
\]
natural in $X$ and $K$, where $CK$ denotes the cone construction on $K$.   
\end{corollary}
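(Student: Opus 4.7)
The plan is to deduce this directly from Proposition~\ref{bisimplicial matching and Dec} by specializing the bisimplicial set there to $K \Box \Delta^0$. So, first I would apply the proposition with bisimplicial set $K \Box \Delta^0$ to obtain
\[
M_{\sigma_!(K \Box \Delta^0)}\, X \;\cong\; M_{K \Box \Delta^0}\, \Dec\, X.
\]
The left hand side is $M_{CK}\, X$ by the very definition of the cone functor $C$ given in Corollary~\ref{left adjoint of Dec_0}, where $C$ is introduced as $\sigma_!((-) \Box \Delta^0)$ (in the corollary we are proving, $CK$ should be read in this sense, which agrees with $K \star \Delta^0$ when $K$ is connected).

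Next I would rewrite the right hand side using the isomorphism \eqref{external product and matching objects} for box products of matching objects applied to bisimplicial objects:
\[
M_{K \Box \Delta^0}\, \Dec\, X \;=\; M_K\, M_{\Delta^0}\, \Dec\, X,
\]
where the inner $M_{\Delta^0}$ acts on the simplicial direction of $\Dec\, X$ indexed by the iterated d\'{e}calage (i.e.\ the direction in which $\Dec\, X$ is viewed as a simplicial object in $s\cC$ with $n$-simplices $\Dec_n X$).

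Finally, I would identify $M_{\Delta^0}\, \Dec\, X$ with $\Dec_0 X$. For any simplicial object $Y$, the category $\Delta / \Delta^0$ has a terminal object $\mathrm{id}_{\Delta^0}$, so its opposite has an initial object and the limit defining $M_{\Delta^0} Y$ reduces to the value $Y_0$. Applying this in $s\cC$ to $\Dec\, X$ viewed in the convention above yields $M_{\Delta^0}\, \Dec\, X = \Dec_0 X$. Concatenating the three identifications gives
\[
M_{CK}\, X \;\cong\; M_{K \Box \Delta^0}\, \Dec\, X \;=\; M_K\, M_{\Delta^0}\, \Dec\, X \;=\; M_K\, \Dec_0 X,
\]
and naturality in $X$ and $K$ is inherited from the naturality statements in the proposition, in \eqref{external product and matching objects}, and in Corollary~\ref{left adjoint of Dec_0}.

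The only real subtlety is a bookkeeping one: one must be careful about which of the two simplicial directions of the bisimplicial object $\Dec\, X$ the inner matching object $M_{\Delta^0}$ acts on, so that this operation picks out the simplicial object $\Dec_0 X$ rather than, for instance, the constant simplicial object $X_0$. This is fixed by the convention used to set up $\Dec$, $\Box$, and equation~\eqref{external product and matching objects} in the preceding subsection, and once chosen consistently the identification is immediate.
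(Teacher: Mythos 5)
Your proposal is correct and follows essentially the same route as the paper's proof: specialize Proposition~\ref{bisimplicial matching and Dec} to the bisimplicial set $K\Box\Delta^0$, use~\eqref{external product and matching objects} to split off $M_{\Delta^0}$, identify $M_{\Delta^0}\Dec\,X$ with $\Dec_0X$, and recall $CK = \sigma_!(K\Box\Delta^0)$ from Corollary~\ref{left adjoint of Dec_0}. Your extra justification of the step $M_{\Delta^0}\Dec\,X \cong \Dec_0X$ via the terminal object of $\Delta/\Delta^0$ is accurate and merely makes explicit what the paper leaves implicit.
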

\begin{proof} 
In Proposition~\ref{bisimplicial matching and Dec} take the bisimplicial set $K$ to be $K\Box \Delta^0$.  Then by 
Corollary~\ref{left adjoint of Dec_0} we have natural 
isomorphisms 
\begin{align*} 
M_{\sigma_!(K\Box \Delta^0)}X & \cong M_{K\Box \Delta^0}\Dec\, X \\ 
& \cong M_KM_{\Delta^0}\Dec\, X \\ 
& \cong M_K \Dec_0 X 
\end{align*} 
where in the second isomorphism we have used~\eqref{external product and matching objects}.  To complete the 
proof we recall that $CK = \sigma_!(K\Box\Delta^0)$.  
\end{proof}
\begin{proof}[Proof of Proposition~\ref{bisimplicial matching and Dec}]
Let $X$ and $Y$ be objects of $\cC$ and let $K$ be a bisimplicial set.  By hypothesis $\cC$ is 
cocomplete and so the functor $\Dec = \sigma^*\colon s\cC\to ss\cC$ has a left adjoint 
$\sigma_!\colon ss\cC\to s\cC$ given as a left Kan extension, so that 
\[
(\sigma_!X)_n =   \varinjlim_{\sigma([p],[q])\to [n]} X_{p,q}.  
\]
Therefore 
we can use Lemma~\ref{adjoint for bisimplicial matching} to establish the following series of natural isomorphisms.  
\begin{align*} 
\cC(X,M_K\Dec\, Y) & \cong ss\cC(X\otimes K,\Dec\, Y) \\
& \cong s\cC(\sigma_!(X\otimes K),Y) \\ 
& \cong s\cC(X\otimes \sigma_!K,Y) \\ 
& \cong \cC(X,M_{\sigma_!K}Y) 
\end{align*} 
where we have used the fact (to be proved shortly below) that there is a natural 
isomorphism $\sigma_!(-\otimes K)\cong (-)\otimes \sigma_!(K)$.  It follows that 
$M_K\Dec\, Y$ and $M_{\sigma_!K}Y$ are isomorphic.  

To see that there is such a natural isomorphism 
as above we use the fact that we can interchange colimits to perform the following computation: 
\begin{align*} 
\sigma_!(X\otimes K)_n & = \varinjlim_{\sigma([p], [q])\to [n]} (X\otimes K)_{p,q} \\ 
& = \varinjlim_{\sigma([p], [q])\to [n]} \coprod_{K_{p,q}} X \\ 
& = \coprod_{\sigma_!K_n} X \\ 
& = (X\otimes \sigma_!K)_n.  
\end{align*} 
These isomorphisms are natural in $n$ and hence the simplicial 
objects $\sigma_!(X\otimes K)$ and $X\otimes \sigma_!K$ are 
isomorphic.    
\end{proof}

\section{Internal local simplicial homotopy theory} 
\label{sec:internal}

In this section we study some aspects of local homotopy 
theory internal to a category $\cC$ equipped with a Grothendieck 
pretopology.  Our treatment follows closely the discussion in 
\cite{Henriques} and the classic \cite{Jardine1}.  

\subsection{Grothendieck topologies} 
\label{sec:groth tops}
Let $\cC$ be a category with all limits and colimits and which comes equipped with a {\em Grothendieck pretopology}.  
Thus for every object $U$ of $\cC$ there is assigned a class of 
families of morphisms $(U_i\to U)_{i\in I}$, called {\em 
covering families} which satisfy three axioms, namely: isomorphisms are covering 
families, covering families are stable under pullback and transitive under composition.  
For example, to say that covering families are stable under pullback 
means that 
if $(U_i\to U)_{i\in I}$ is a covering family, and $V\to U$ is a 
morphism in $\cC$, then $(V\times_U U_i\to V)_{i\in I}$ is also a covering family.
For more details we refer to Definition 2 in Section III.2 of \cite{MM}. 

We'll often refer to a covering family $(U_i\to U)_{i\in I}$ as a {\em cover} of $U$.  It is clear that if $\cC$ is equipped 
with a Grothendieck pretopology, then for any object $B$ of $\cC$, the slice category $\cC_{/B}$ inherits a natural Grothendieck 
pretopology: a family of morphisms $(U_i\to U)_{i\in I}$ in $\cC_{/B}$ is a covering family in $\cC_{/B}$ if and only if 
the underlying family of morphisms $(U_i\to U)_{i\in I}$ is a covering family in $\cC$.  

We will say that a map $f\colon X\to Y$ in $\cC$ is a {\em local epimorphism} if there exists a covering family 
$(Y_i\to Y)_{i\in I}$ with the property that for each $i\in I$ there is a section of the 
induced map $Y_i\times_Y X\to Y_i$.  Note that local epimorphisms are closed 
under retracts.    

We will also need to talk about presheaves represented by 
objects of $\cC$.  When $\cC$ is small we will sometimes write 
$\widehat{\cC}$ for the category $[\cC^\op,\Set]$ of set-valued presheaves on $\cC$.  
In general however, we will run into set theoretic 
difficulties in trying to consider the category of 
presheaves on $\cC$.  Rather than confronting these 
difficulties we will suppose that in addition to $\cC$, there 
exists a small subcategory $\cC_0\subset \cC$ such that 
$\cC_0$ contains a terminal object and is equipped 
with a Grothendieck topology in such a way that if 
$(U_i\to U)_{i\in I}$ is a covering family of $U$ in $\cC_0$, 
then $(U_i\to U)_{i\in I}$ is also a covering family in 
$\cC$.  

Then every object $X$ of $\cC$ represents a 
presheaf $\hat{X}$ on $\cC_0$.  If $f\colon X\to Y$ is a map 
in $\cC$ then we write $\hat{f}\colon \hat{X}\to \hat{Y}$ 
for the induced map of representable presheaves on 
$\cC_0$.  Note that if $f\colon X\to Y$ is a local epimorphism 
in $\cC$, then $\hat{f}\colon \hat{X}\to \hat{Y}$ is a 
local epimorphism of presheaves on 
$\cC_0$ in the sense that for every section $y\in \hat{Y}(U)$, there is a covering 
family $(U_i\to U)_{i\in I}$ and elements $x_i\in \hat{X}(U)$ 
such that $\hat{f}(x_i) = y|_{U_i}$ for each $i\in I$.      

As our main example we will consider the case where $\cC = \cK$, 
the category of $k$-spaces, where a covering family 
of a space $U$ in $\cK$ is understood to mean as usual a collection 
of maps of the form $(U_i\to U)_{i\in I}$, where $(U_i)_{i\in I}$ forms an 
open cover of $X$.  For this definition to make sense, we need to know 
that every open subspace of a $k$-space is itself a $k$-space.  However this follows 
from results of \cite{Vogt}: (with the notations of that paper) Theorem 5.1 of \cite{Vogt} shows that 
$\cK$ (called $\mathscr{KG}$ in \cite{Vogt}) satisfies Axiom 1*, and therefore  
Proposition 2.4 of \cite{Vogt} shows that the relative topology on any open 
subset of a $k$-space coincides with the topology of its $k$-ification.  

With this choice of $\cC$, we will be interested in the following small subcategory 
$\cC_0$ of $\cC$.  Let $B$ be a paracompact space in $\cK$ and let 
$\cC_0 = \Open(B)$ be the subcategory of $\cK$ generated by all open subsets of 
$B$.  It is clear that in this case $\cC_0$ has a Grothendieck 
topology compatible with the open cover topology on $\cC$.

\subsection{Local homotopy theory of simplicial presheaves} 
\label{simp pshvs} 

Suppose that $\cC$ is a small category.  
Recall that there are two model structures on the category of diagrams 
$s\widehat{\cC} = [\cC^\op,\sSet]$.  The weak equivalences for each of these 
model structures are the {\em object-wise weak homotopy equivalences}, 
i.e.\ the maps $X\to Y$ in $s\widehat{\cC}$ such that 
$X(C)\to Y(C)$ is a weak homotopy equivalence of simplicial sets for all $C\in \cC$.  
The {\em injective model structure} has as its cofibrations 
the monomorphisms, with the fibrations determined by the RLP 
with respect to the trivial cofibrations.  The {\em projective model 
structure} has as its fibrations the 
object-wise Kan fibrations with the cofibrations determined by the LLP 
with respect to the trivial fibrations.  To distinguish between the 
injective and projective model structures we will write $s\Pre(\cC)$ 
for the injective model structure on the category of simplicial presheaves 
and, following \cite{Dugger-Univ}, we will write $U\cC$ for the 
projective model structure (in \cite{Dugger-Univ} this is called the {\em universal} model 
structure).      

For us, the projective model structure has a slight advantage 
over the injective model structure: the fibrant objects are easy to 
understand and there is a criterion to detect cofibrant 
objects which is relatively easy to check in practice.  
Namely, we have the following result from \cite{Dugger-Univ}.  
\begin{proposition}[Dugger \cite{Dugger-Univ}] 
\label{Dugger-split}
A simplicial presheaf $X$ is cofibrant in $U\cC$ if it is split, and is a degree-wise 
coproduct of representables.  
\end{proposition}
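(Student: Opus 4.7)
The plan is to exhibit $X$ as a transfinite composition of pushouts of generating projective cofibrations, whence cofibrancy follows since cofibrations in any cofibrantly generated model category are closed under transfinite composition, coproducts and pushout.

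First I would recall that the projective model structure $U\cC$ on $s\widehat{\cC}$ is cofibrantly generated with generating cofibrations given by the maps
\[
\hat{C}\otimes \partial\Delta^n \to \hat{C}\otimes \Delta^n,
\]
where $C$ ranges over objects of $\cC$ and $n\geq 0$, and where $\otimes$ denotes the usual tensoring of simplicial presheaves over simplicial sets. Consequently, a simplicial presheaf is cofibrant in $U\cC$ as soon as it can be built from $\emptyset$ by iterated pushouts along coproducts of such generators.

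Next I would use the skeletal filtration $\mathrm{sk}_0 X\subset \mathrm{sk}_1 X\subset \cdots \subset X$, with $X = \colim_n \mathrm{sk}_n X$. The content of the splitness hypothesis is precisely that at each level $n$ there is a decomposition
\[
X_n \cong L_n X \sqcup N_n X,
\]
compatible with degeneracies, where $L_n X$ is the latching object and $N_n X$ is the subpresheaf of ``non-degenerate'' $n$-simplices; moreover, since $X$ is a degree-wise coproduct of representables, the summand $N_n X$ is itself a coproduct of representables, say $N_n X = \coprod_{i\in J_n} \hat{C}_{n,i}$. Using splitness, the standard skeletal induction identifies the inclusion $\mathrm{sk}_{n-1} X\hookrightarrow \mathrm{sk}_n X$ with the pushout
\[
\xymatrix{
N_n X\otimes \partial\Delta^n \ar[r]\ar[d] & \mathrm{sk}_{n-1}X \ar[d] \\
N_n X\otimes \Delta^n \ar[r] & \mathrm{sk}_n X,
}
\]
so that $\mathrm{sk}_{n-1}X\to \mathrm{sk}_n X$ is a pushout of the coproduct $\coprod_{i\in J_n}\bigl(\hat{C}_{n,i}\otimes \partial\Delta^n \to \hat{C}_{n,i}\otimes \Delta^n\bigr)$ of generating projective cofibrations and is therefore a projective cofibration. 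Assembling these in a transfinite composition, starting from $\mathrm{sk}_{-1}X = \emptyset$, shows $\emptyset \to X$ is a projective cofibration, i.e.\ $X$ is cofibrant.

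The main obstacle, and the only nontrivial step, is verifying the pushout square above: one must check that the splitness hypothesis really does ensure that the intersection of the ``new'' simplices $N_n X\otimes \Delta^n$ with the old skeleton $\mathrm{sk}_{n-1}X$ inside $\mathrm{sk}_n X$ is exactly $N_n X\otimes \partial\Delta^n$, with no further identifications coming from degeneracies. This is where splitness is essential, since without it the decomposition of $X_n$ into degenerate and non-degenerate parts need not be respected by the face maps in a way compatible with taking coproducts of representables, and the pushout square can fail. Once this combinatorial verification is in place, the rest of the argument is formal.
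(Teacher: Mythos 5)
The paper offers no proof of this proposition at all --- it is quoted from Dugger's paper \cite{Dugger-Univ}, where it is established by exactly the cell-attachment argument you outline, so your proposal is correct and essentially reproduces the source's proof. The two points you flag do go through: the skeletal square is a pushout because colimits of presheaves are computed objectwise, where splitness guarantees that the sections of $N_nX$ are precisely the nondegenerate simplices and form a subpresheaf, so the square reduces to the usual Eilenberg--Zilber skeletal decomposition of a simplicial set; and $N_nX$ is again a coproduct of representables because representables are connected, so any coproduct summand of a coproduct of representables is itself such a coproduct.
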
 

Recall that a simplicial object $X$ is said to be {\em split} if 
for all $n\geq 0$ there exist subobjects $U_n\subset X_n$ such that the canonical map 
\[
\coprod_{\sigma\colon [n]\twoheadrightarrow [m] } U_{\sigma} \to X_n 
\]
is an isomorphism, where $U_{\sigma}$ denotes a copy of $U_m$ and the map $U_{\sigma}\to X_n$ is the  
composition $U_m \to X_m \xrightarrow{\sigma^*} X_n$.  In fact, Dugger 
shows in \cite{Dugger-Univ} that there is a very convenient cofibrant replacement 
functor in $U\cC$ which replaces any simplicial presheaf with a split one.    

Now suppose that $\cC$ is a small category equipped with a Grothendieck topology.  
In the paper \cite{Jardine1} the category $s\widehat{\cC}$ of simplicial presheaves on 
$\cC$ was equipped with the structure of a model category.  
Following \cite{DHI} we will write $s\Pre(\cC)_{\mathcal{L}}$ 
for this model category.  The weak equivalences of this model structure 
are defined in terms of certain sheaves of simplicial homotopy groups as 
we now recall.  

Let $X$ be a simplicial presheaf on $\cC$.  If $n\geq 1$ then 
for any object $C$ of $\cC$ and any vertex $v\in X_0(C)$, Jardine defines 
sheaves $\pi_n(X_C,v)$ on $\cC_{/C}$ (here $X_C$ denotes the 
restriction of $X$ to $\cC_{/C}$) as the sheaves associated to the 
presheaves 
\[
(C'\to C)\mapsto \pi_n(X(C'),v). 
\]
Similarly the sheaf $\pi_0(X_C)$ on $\cC_{/C}$ is defined to be the sheaf associated 
to the presheaf 
\[
(C'\to C)\mapsto \pi_0(X(C')).  
\]
A map $f\colon X\to Y$ in $s\widehat{\cC}$ is called a {\em local weak equivalence} 
if the induced maps $\pi_0(X_C)\to \pi_0(Y_C)$ and $\pi_n(X_C,v)\to \pi_n(Y_C,f(v))$ 
are isomorphisms of sheaves on $\cC_{/C}$ for all objects $C$ of $\cC$ and all choices of vertices $v\in X_0(C)$.  
In \cite{Jardine1} it is proven that the local weak equivalences are the weak equivalences 
for a model structure on $s\widehat{\cC}$ whose cofibrations are the monomorphisms.   

We will only be interested in a very special case of this general theory --- namely 
when $\cC$ is the category $\Open(B)$ of open subsets of a paracompact space $B$ 
in $\cK$, as described above.  We will write $s\Pre(B)$ 
for the category $s\widehat{\cC}$ of simplicial presheaves on $\cC$ and say 
that an object $X\in s\Pre(B)$ is a simplicial presheaf on $B$ in this case.  
In this case the local weak equivalences are much easier to describe: a map $f\colon X\to Y$ 
in $s\Pre(B)$ is a local weak equivalence if and only if the induced map on stalks 
$f_p\colon X_p\to Y_p$ is a weak homotopy equivalence for all points $p$ in $B$.  

An important fact proven in \cite{DHI} is that the Jardine model structure $s\Pre(\cC)_{\mathcal{L}}$ is 
a localization of the injective model structure $s\Pre(\cC)$ at a certain collection of 
maps.  We recall some of the details here as it will be important for us later.   
Let $C$ be an object of $\cC$ and write $C$ also for the corresponding representable 
presheaf $\hat{C}$ on $\cC$.  Recall that a map $U\to C$ in $s\widehat{\cC}$ is called 
a {\em hypercover} of $C$ if each $U_n$ is a coproduct of representables and the maps 
\[
U_n\to M^C_{\partial\Delta^n}U 
\]
are local epimorphisms of presheaves on $\cC$.  Here the matching objects are computed 
in the category $s\widehat{\cC}/C$ as described above.  A hypercover $U\to C$ is said to be {\em split} 
if the underlying simplicial presheaf $U$ is split.  One of the main theorems of \cite{DHI} 
is the following.  
\begin{theorem}[\cite{DHI}] 
Let $S$ be the class of all hypercovers on $\cC$.  Then the localization $s\Pre(\cC)/S$ 
exists and coincides with $s\Pre(\cC)_\mathcal{L}$.  Moreover the localization $U\cC_{\mathcal{L}} = U\cC/S$ 
exists and there is a Quillen equivalence 
\[
s\Pre(\cC)_{\mathcal{L}} 
\leftrightarrows 
U\cC_{\mathcal{L}}. 
\]  
\end{theorem}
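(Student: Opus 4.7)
The approach is left Bousfield localization in the style of Hirschhorn. Both $s\Pre(\cC)$ and $U\cC$ are left proper combinatorial (in particular cellular) model categories with the same class of objectwise weak equivalences, so the localization machinery will apply once we have a set of maps rather than a proper class. The class of all hypercovers is not a set, so the first step is a cardinality reduction: one fixes a regular cardinal $\kappa$ that dominates $\cC$ together with its topology, then checks that every hypercover of a representable can, in an appropriate sense, be refined by a $\kappa$-small one, producing a set $S_0$ of representatives with the same local objects as the full class $S$.

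Applying Hirschhorn's localization to $S_0$ produces the model categories $s\Pre(\cC)/S$ and $U\cC/S$. The next step is to identify their fibrant objects as the objectwise fibrant simplicial presheaves $X$ satisfying hypercover descent: for every hypercover $U\to C$, the natural map $X(C)\to \holim X(U)$ is a weak equivalence of simplicial sets. The $S$-local weak equivalences are then characterised as the maps $f\colon X\to Y$ inducing weak equivalences on derived mapping spaces into every such fibrant object.

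The central technical obstacle is showing that the $S$-local weak equivalences coincide with Jardine's local weak equivalences, defined via the sheaves $\pi_0(X_C)$ and $\pi_n(X_C,v)$. For one direction, I would show that Jardine fibrant objects satisfy hypercover descent; this is a stalkwise computation exploiting the fact that, at each point, a hypercover pulls back to a contractible augmented simplicial object, together with the characterization of Jardine fibrancy in terms of the homotopy group sheaves. For the converse one constructs a functorial fibrant replacement in $s\Pre(\cC)/S$ — a generalized \v{C}ech-Godement construction assembled from the hypercovers in $S_0$ — and verifies that it induces isomorphisms on the sheaves $\pi_0$ and $\pi_n$ used in Jardine's definition. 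Since Jardine's structure and $s\Pre(\cC)/S$ share the class of cofibrations (the monomorphisms), the coincidence of weak equivalences forces the model structures to be equal. The argument for $U\cC_{\mathcal{L}}$ is parallel, using the criterion of Proposition~\ref{Dugger-split} to handle projective cofibrancy.

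Finally, for the Quillen equivalence: the identity adjunction $U\cC \leftrightarrows s\Pre(\cC)$ is already a Quillen equivalence of the unlocalized structures, since every projective cofibration is a monomorphism and both structures share the same objectwise weak equivalences. Bousfield localization at a common set of maps on both sides preserves Quillen equivalences, so the identity adjunction descends to a Quillen equivalence $s\Pre(\cC)_{\mathcal{L}}\leftrightarrows U\cC_{\mathcal{L}}$. The main difficulty throughout lies in the descent identification of the previous paragraph, where the sheaf-theoretic definition of local weak equivalence must be reconciled with the homotopy-theoretic one produced by Bousfield localization.
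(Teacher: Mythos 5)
A preliminary remark: the paper does not prove this statement at all — it is quoted from Dugger--Hollander--Isaksen \cite{DHI} as background, so there is no internal proof to compare against. Your outline is essentially a compressed account of the strategy of \cite{DHI} itself: reduce the class of hypercovers to a dense set of bounded ones, form the left Bousfield localizations of the injective and projective objectwise structures, identify the local objects as objectwise fibrant presheaves satisfying hypercover descent, and note that the identity adjunction is a Quillen equivalence before localization and remains one after localizing both sides at the same set. All of that matches the source.

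However, as a standalone proof your sketch has a genuine gap at the step you yourself flag as the crux: showing that every Jardine local weak equivalence is an $S$-local equivalence (equivalently, that localizing at hypercovers alone already yields all of Jardine's weak equivalences). Your proposed mechanism — build a functorial $S$-fibrant replacement $X\to LX$ from the hypercovers in $S_0$ and verify that it induces isomorphisms on the sheaves $\pi_0$ and $\pi_n$ — cannot do this job. Once one knows that Jardine-fibrant objects are $S$-local (your first direction), every $S$-local equivalence is automatically a Jardine equivalence, so in particular $X\to LX$ induces isomorphisms on homotopy sheaves for free; the verification is vacuous. What you actually still need is that a local weak equivalence between objectwise fibrant presheaves satisfying descent is an \emph{objectwise} weak equivalence (equivalently, that for such $F$ the Jardine fibrant replacement $F\to F'$ is an objectwise equivalence). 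This is precisely the main technical theorem of \cite{DHI}, proved there by a substantial cofinality/Verdier-hypercovering argument, and it is not recoverable from the steps you list. A smaller caveat: your stalkwise argument that Jardine-fibrant objects satisfy descent implicitly assumes the site has enough points; that suffices for $\cC=\Open(B)$ as used in this paper, but not for the general statement as quoted.
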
   
As pointed out in \cite{DHI}, the localized projective model structure $U\cC_{\mathcal{L}}$ 
has an advantage over $s\Pre(\cC)_{\mathcal{L}}$ in that the fibrant objects are easier to 
describe and there is an explicit formula for cofibrant replacement.  
In \cite{DHI}, a simplicial presheaf $X$ is said to {\em satisfy descent} for a 
hypercover $U\to C$ if the natural map 
\[
F(C) \to 
\hoinvlim_{n} F(U_n) 
\]
is a weak homotopy equivalence of simplicial sets.  In \cite{DHI} the following characterization 
of the fibrant objects in $U\cC_{\mathcal{L}}$ is obtained.  
\begin{theorem}[\cite{DHI}] 
A simplicial presheaf $X$ is fibrant in $U\cC_{\mathcal{L}}$ if and only if 
\begin{enumerate} 
\item $X$ is objectwise fibrant, i.e.\ $X(C)$ is a Kan complex for all objects $C$ of $\cC$, 

\item $X$ satisfies descent for all hypercovers.  
\end{enumerate}
\end{theorem}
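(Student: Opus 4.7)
The plan is to invoke the standard characterization of fibrant objects in a left Bousfield localization. Since $U\cC_{\mathcal{L}}$ is by construction the localization of the projective model structure $U\cC$ at the class $S$ of hypercovers, $X$ is fibrant in $U\cC_{\mathcal{L}}$ if and only if it is fibrant in $U\cC$ and $S$-local, i.e.\ for every hypercover $U\to C$ the induced map on homotopy function complexes $\Map^h(C,X)\to \Map^h(U,X)$ is a weak equivalence of simplicial sets. The fibrations of $U\cC$ are by definition the objectwise Kan fibrations, so fibrancy in $U\cC$ is exactly condition (1); the remaining task is to show that $S$-locality is equivalent to the descent condition (2).

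For this I would replace the homotopy function complexes by honest mapping spaces. Every representable $C$ is cofibrant in $U\cC$, so the Yoneda lemma gives $\Map^h(C,X)\cong X(C)$ for $X$ projectively fibrant. By Proposition~\ref{Dugger-split}, any hypercover $U$ is cofibrant in $U\cC$ since it is split and levelwise a coproduct of representables, so $\Map^h(U,X)\cong \Map(U,X)$. The main technical input is then the natural weak equivalence
\[
\Map(U,X) \simeq \holim_n X(U_n),
\]
where $X(U_n)$ is shorthand for $\prod_\alpha X(C_{n,\alpha})$ when $U_n = \coprod_\alpha C_{n,\alpha}$. One obtains this from the end formula $\Map(U,X) = \int_{[n]\in\Delta} X(U_n)^{\Delta^n}$ by observing that the cosimplicial simplicial set $[n]\mapsto X(U_n)$ is Reedy fibrant when $X$ is objectwise fibrant: its matching maps are computed from the colatching maps of the split simplicial presheaf $U$, which are split epimorphisms at each level. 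The totalization of a Reedy fibrant cosimplicial object is a model for the homotopy limit, giving the displayed equivalence.

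With this identification in place, $S$-locality of $X$ asserts precisely that $X(C)\to \holim_n X(U_n)$ is a weak equivalence for every hypercover $U\to C$, which is the descent condition (2). The main obstacle is the comparison $\Map(U,X)\simeq \holim_n X(U_n)$: one must argue carefully that objectwise fibrancy of $X$, combined with the splitting of $U$, guarantees Reedy fibrancy of $X(U_\bullet)$, and that the resulting strict end agrees with the honest homotopy limit. Once this comparison is established, both directions of the theorem follow immediately from the general Bousfield localization framework applied to the definition $U\cC_{\mathcal{L}} = U\cC/S$.
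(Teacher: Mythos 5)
Your overall strategy (fibrant in $U\cC_{\mathcal{L}}$ iff projectively fibrant and $S$-local, then identify $S$-locality with descent by computing derived mapping spaces) is the right one, and indeed the paper does not prove this statement at all but imports it from \cite{DHI}. However, there is a genuine gap at the key step: you assert that \emph{any} hypercover is split and hence cofibrant in $U\cC$ by Proposition~\ref{Dugger-split}, and your Reedy-fibrancy argument for the cosimplicial space $[n]\mapsto X(U_n)$ also leans on the splitting. This is false: a hypercover is only required to be a levelwise coproduct of representables with the maps $U_n\to M^C_{\partial\Delta^n}U$ local epimorphisms; splitness is an additional condition, and the paper itself singles out the \emph{split} hypercovers as a distinguished (dense) subclass precisely because only those are known to be projectively cofibrant. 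For a non-split hypercover, $\Map(U,X)=\mathrm{Tot}\,X(U_\bullet)$ computes neither the derived mapping space nor the homotopy limit, so your identification of $S$-locality with condition (2) breaks down exactly where the content of the theorem lies.

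There are two standard repairs, and you need one of them. Either (a) avoid splitness altogether: for any simplicial presheaf $U$ the natural map $\hocolim_{\Delta^{\op}}U_n\to U$ is an objectwise weak equivalence, so for objectwise fibrant $X$ one gets $\Map^h(U,X)\simeq \holim_n \Map^h(U_n,X)\simeq \holim_n X(U_n)$, using only that each $U_n$ is a coproduct of representables (hence cofibrant); this establishes the comparison for \emph{all} hypercovers and makes your Bousfield-localization argument go through. Or (b) work only with split hypercovers, where your cofibrancy and Reedy arguments are correct, but then you must invoke density and prove separately that locality with respect to split hypercovers implies descent for arbitrary hypercovers --- this is a nontrivial part of the argument in \cite{DHI}, not a formality. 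A further, smaller point: $S$ here is a proper class, so ``the standard characterization of fibrant objects in a left Bousfield localization'' is not automatic; you need the existence statement for $U\cC/S$ quoted earlier in the paper together with left properness of $U\cC$ to conclude that the fibrant objects are exactly the $S$-local ones.
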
  
In general it is difficult to compute the set of homotopy classes $[X,Y]$ in $U\cC_{\mathcal{L}}$, 
since it is difficult to compute fibrant replacements in $U\cC_{\mathcal{L}}$.  The  
generalized Verdier hypercovering theorem \cite{Brown, DHI, Jardine1} allows one to compute some invariants of $[X,Y]$, 
at least when $Y$ is locally fibrant in the following well known sense.  

Suppose that $i\colon K\to L$ is a map of simplicial sets.  A map $f\colon X\to Y$ in 
$s\widehat{\cC}$ is said to have {\em local liftings} relative to $i\colon K\to L$ if for all 
objects $C$ in $\cC$ and all diagrams in $s\widehat{\cC}$ of the form 
\[
\xymatrix{ 
K\otimes C \ar[d]_-i \ar[r] & X \ar[d]^-f \\ 
L\otimes C \ar[r] & Y,} 
\]
there exists a covering sieve $R\subset \cC(-,C)$ such that for every map 
$\phi\colon U\to C$ in $R$, the diagram obtained by restricting along $\phi$ has a lift 
$L\otimes U\to X$.  A map $f\colon X\to Y$ in $s\widehat{\cC}$ is said to be a 
{\em local fibration} if $f$ has local liftings relative to $\Lambda^n_k\subset \Delta^n$ for 
all $0\leq k\leq n$ and all $n\geq 1$.  An object $X$ in $s\widehat{\cC}$ is 
said to be {\em locally fibrant} if the canonical map $X\to 1$ is a local fibration.  
It can be shown (see \cite{DI,Jardine1}) 
that a map $f\colon X\to Y$ is a locally acyclic local fibration (i.e.\ a local weak equivalence and 
a local fibration) if and only if $f$ has local liftings relative to $\partial\Delta^n\subset \Delta^n$ 
for all $n\geq 0$.  Thus if $C$ is an object of $\cC$ then a map $f\colon U\to C$ in $s\widehat{\cC}$ 
is a hypercover if and only if $f$ is a locally acyclic local fibration and $U_n$ is a coproduct 
of representables for all $n\geq 0$.  

When $\cC = \Open(B)$, a map $f\colon X\to Y$ in $s\widehat{\cC}$ is a local fibration 
if and only if the induced map on stalks $f_p\colon X_p\to Y_p$ is a Kan fibration for all points 
$p$ of $M$.  If $f\colon X\to Y$ is an objectwise Kan fibration then $f$ is a local fibration, but not 
conversely.    

To state the hypercovering theorem we need some notation.  
For an object $C$ of $\cC$, let $HR(C)$ denote the full subcategory of $s\widehat{\cC}$ 
whose objects are the hypercovers of $C$.  We write $\pi HR(C)$ for the category with the 
same objects as $HR(C)$ but whose morphisms are simplicial homotopy classes of 
morphisms in $HR(C)$.  We have the following result.  
\begin{theorem}[generalized Verdier hypercovering theorem \cite{Brown,DHI,Jardine1}] 
\label{verdier hypercovering theorem}
Let $X$ be a locally fibrant simplicial presheaf on $\cC$ and let $C$ be an object of $\cC$.  Then there 
is an isomorphism 
\[
[C,X] = \varinjlim_{U\in \pi HR(C)} \pi(U,X) 
\]
in $\Ho(U\cC_{\mathcal{L}})$ where $\pi(-,-)$ denotes simplicial homotopy classes of maps.  
\end{theorem}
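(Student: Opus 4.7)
The plan is to define an explicit natural map
\[
\Phi \colon \varinjlim_{U\in \pi HR(C)} \pi(U,X) \longrightarrow [C,X]
\]
and show it is a bijection. For a hypercover $U\to C$ and a simplicial map $f\colon U\to X$, we let $\Phi([f])$ be the morphism $C\leftarrow U\xrightarrow{f}X$ in $\Ho(U\cC_{\mathcal{L}})$, where the backward arrow is inverted using the fact (recalled just before the theorem) that a hypercover is a locally acyclic local fibration and hence a local weak equivalence. Well-definedness requires two checks: simplicially homotopic maps $U\rightrightarrows X$ clearly induce the same arrow in any homotopy category, and given a refinement $V\to U$ in $HR(C)$, the two arrows $C\leftarrow U\xrightarrow{f}X$ and $C\leftarrow V\to U\xrightarrow{f}X$ agree in $\Ho(U\cC_{\mathcal{L}})$ since the refinement is itself a local weak equivalence.

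For surjectivity I would first argue that $\pi HR(C)$ is filtered: any two hypercovers $U,V$ of $C$ admit a common refinement obtained by forming $U\times_C V$ and iteratively patching its matching objects with representables to kill the failure of the local epimorphism condition, and any two parallel maps $W\rightrightarrows U$ in $HR(C)$ become simplicially homotopic after pulling back along a suitable refinement $W'\to W$. With filtered-ness in hand, given any morphism $\varphi\colon C\to X$ in $\Ho(U\cC_{\mathcal{L}})$ I would pick a cofibrant replacement $QC\to C$ of $C$ in $U\cC_{\mathcal{L}}$ arranged to be a split hypercover (available by Proposition~\ref{Dugger-split} together with the characterization of hypercovers as locally acyclic local fibrations with representable components). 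Then $\varphi$ is represented by an honest map $QC\to X$ in $s\Pre(\cC)$, whose simplicial homotopy class lies in the colimit and maps under $\Phi$ to $\varphi$.

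For injectivity, suppose $f\colon U\to X$ and $g\colon V\to X$ satisfy $\Phi([f])=\Phi([g])$. Replacing $U$ and $V$ by a common refinement $W\in HR(C)$, we reduce to the case where $U=V=W$ and two maps $f,g\colon W\rightrightarrows X$ become equal in $\Ho(U\cC_{\mathcal{L}})$. The crucial claim is that $f$ and $g$ are then simplicially homotopic after pulling back along a further hypercover $W'\to W$. This is where local fibrancy of $X$ is used essentially: a candidate homotopy $W'\otimes\Delta^1\to X$ is built skeleton by skeleton over $\Delta^1$, solving at each stage a local lifting problem of the form $\Lambda^n_k\to \Delta^n$ against $X\to 1$, using the local lifting property to find, on each cover, extensions of the partial homotopy, and passing to a refinement of $W$ that encodes all the covers so produced.

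The main obstacle is this final step: demonstrating that simplicial homotopy classes on the hypercover system capture \emph{all} the identifications made when passing to $\Ho(U\cC_{\mathcal{L}})$, given that $X$ is merely locally fibrant rather than globally fibrant. The classical Brown--Jardine argument produces the homotopy by an explicit inductive construction of local lifts against horn inclusions and then assembles them on a sufficiently fine hypercover; organizing this so that the assembled homotopy is genuinely simplicial, and verifying that the ensuing bijection is independent of all auxiliary choices, is the technical heart of the proof.
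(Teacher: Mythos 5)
The paper does not actually prove this statement --- it is quoted from Brown, DHI and Jardine --- so your proposal must stand on its own against those standard arguments, and as written it has a genuine gap at both substantive steps. For surjectivity you take a cofibrant replacement $QC\to C$ which is a split hypercover and assert that a morphism $\varphi\colon C\to X$ in $\Ho(U\cC_{\mathcal{L}})$ ``is represented by an honest map $QC\to X$''. That is only true when the target is fibrant in $U\cC_{\mathcal{L}}$, i.e.\ (by the DHI characterization recalled in the paper) objectwise fibrant and satisfying descent for \emph{all} hypercovers; mere local fibrancy of $X$ gives nothing of the sort. In general $\varphi$ is represented only by a map $QC\to RX$ into a fibrant replacement, and the whole content of the theorem is the descent of such a map, and of homotopies between such maps, from $RX$ back to $X$ after refining the hypercover --- using that $X\to RX$ is a local weak equivalence between \emph{locally fibrant} objects, via Brown's factorization of such a map (a section of a locally trivial fibration followed by a locally trivial fibration, available in his category-of-fibrant-objects framework) or Jardine's cocycle-category argument. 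Your sketch never performs this transfer, so the surjectivity argument is not merely incomplete but rests on a false intermediate claim.

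The injectivity half has the same missing mechanism, and you concede it: the ``crucial claim'' that two maps $W\rightrightarrows X$ agreeing in $\Ho(U\cC_{\mathcal{L}})$ become simplicially homotopic after refining $W$ is precisely the hard part. The device you propose (local horn-filling against $X\to 1$) is not by itself adequate: constructing a simplicial homotopy requires solving extension problems along $(\partial\Delta^n\otimes\Delta^1)\cup(\Delta^n\otimes\partial\Delta^1)\subset\Delta^n\otimes\Delta^1$, and, before any lifting can begin, one must first convert the abstract identification in the localized homotopy category into concrete data (e.g.\ a simplicial homotopy into $RX$) that can then be pulled back to $X$ over a finer hypercover --- again the local-to-global comparison that distinguishes locally fibrant from fibrant targets. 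Finally, the filteredness of $\pi HR(C)$ (existence of common refinements with representable components, and coequalization of parallel maps up to simplicial homotopy after refinement) is itself a nontrivial lemma in DHI/Jardine rather than the routine patching you indicate. The overall reduction (define $\Phi$, check well-definedness, prove surjectivity and injectivity over refinements) is the right skeleton, but the theorem's actual content is in the steps you leave open or assert incorrectly.
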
  
As pointed out in \cite{DHI}, the colimit above could just as well be taken over any full subcategory 
of $\pi HR(C)$ whose objects belong to a {\em dense} set of hypercovers.  In the case of main 
interest for us, namely when $\cC = \Open(B)$, this means that the colimit above could be 
taken over the full subcategory of $\pi HR(C)$ whose objects are the split hypercovers.  
Note that since $U\cC_{\mathcal{L}}$ is a left Bousfield localization, every cofibrant object of $U\cC$ is automatically cofibrant in 
$U\cC_{\mathcal{L}}$.  In particular every split hypercover is cofibrant in $U\cC_{\mathcal{L}}$ (Proposition~\ref{Dugger-split}).  
As a consequence we have the following simple observation.  
\begin{lemma} 
\label{surjectivity}
Let $p\colon X\to Y$ be an objectwise Kan fibration of 
simplicial presheaves on $B$ and suppose that $Y$ is locally fibrant.  
Then the map 
\[
[1,X]\to [1,Y] 
\]
in $\Ho(U\cC_{\mathcal{L}})$ is surjective, where $1$ denotes the terminal object.  
\end{lemma}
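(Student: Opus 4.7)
The plan is to reduce the surjectivity statement to a refined-hypercover lifting problem via the Verdier hypercovering theorem. The first step is to verify that $X$ is itself locally fibrant: for each point $x\in B$ the stalk is a filtered colimit, so $p_x\colon X_x\to Y_x$ is a Kan fibration of simplicial sets, and since $Y_x$ is a Kan complex by hypothesis, so is $X_x$. With both $X$ and $Y$ locally fibrant, Theorem~\ref{verdier hypercovering theorem} identifies
\[
[1,X] \;=\; \varinjlim_{U\in \pi HR(1)} \pi(U,X), \qquad [1,Y] \;=\; \varinjlim_{U\in \pi HR(1)} \pi(U,Y),
\]
and by the subsequent remark I may restrict both colimits to the cofinal subcategory of split hypercovers. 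The map $[1,X]\to [1,Y]$ is induced by post-composition with $p$.

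Given a class in $[1,Y]$ represented by $f\colon U\to Y$ with $U\to 1$ a split hypercover, it suffices to produce a split hypercover $V\to 1$, a refinement $v\colon V\to U$ in $\pi HR(1)$, and a map $g\colon V\to X$ with $p g$ simplicially homotopic to $f v$. The natural construction is to form the pullback $P:=U\times_Y X$; the projection $P\to U$ is an objectwise Kan fibration as a pullback of $p$, and any map $\tilde g\colon V\to P$ yields $g$ and $v$ by composing with the two projections, satisfying $pg=fv$ on the nose.

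The work then lies in building $\tilde g\colon V\to P$ skeleton-by-skeleton, with $V$ split, each $V_n$ a coproduct of representables, and matching maps $V_n\to M_{\partial\Delta^n}V$ local epimorphisms (so that $V\to 1$ is a split hypercover). The inductive step at level $n\geq 1$ is driven by the fact that the objectwise Kan fibration $P\to U$ induces surjections of relative matching objects $P_n\twoheadrightarrow (M_{\partial\Delta^n}P)\times_{M_{\partial\Delta^n}U} U_n$, so that boundary data on $V$ mapping into $P$ can be filled in $P_n$ objectwise, and then represented by a sufficiently fine coproduct of representables to yield the required $V_n$.

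The main obstacle is the base case $n=0$, where there are no matching constraints but we still need $V_0\to P_0\to U_0$ to be a local epimorphism with $V_0$ a coproduct of representables. This reduces to lifting, locally on $B$, each section of $U_0$ through the map $X_0\to Y_0$, and is where the flexibility of refining inside $\pi HR(1)$ is essential: one covers each representable summand of $U_0$ by opens on which the corresponding section of $Y_0$ lifts through $p_0$, invoking that $p$ is a local fibration together with the freedom to replace $U$ by any finer split hypercover. Once $V_0$ is in hand the induction proceeds purely by horn-filling in $P\to U$, and since $pg=fv$ by construction, the resulting class $[g]\in[1,X]$ maps to $[f]\in[1,Y]$, which completes the proof.
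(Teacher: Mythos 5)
Your skeleton-by-skeleton construction does not go through, and the failure is at exactly the two places where you try to extract strict lifting data from the hypotheses. First, the inductive step rests on a false assertion: an objectwise Kan fibration $P\to U$ does \emph{not} induce surjections $P_n\to M_{\partial\Delta^n}P\times_{M_{\partial\Delta^n}U}U_n$; surjectivity of these relative matching maps with respect to $\partial\Delta^n\subset\Delta^n$ characterizes \emph{acyclic} fibrations, while a Kan fibration only gives it for the horn inclusions $\Lambda^n_k\subset\Delta^n$. Since $p$ is not assumed to be a local weak equivalence, neither is $P=U\times_Y X\to U$, so boundary data on $V$ cannot in general be filled in $P$; producing a hypercover $V$ with a strict map $V\to P$ over $U$ would essentially require $p$ to be a locally acyclic local fibration, which is precisely not the situation. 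By insisting on the strict factorization $pg=fv$ through the pullback you discard the up-to-homotopy slack that the statement is really about, and refining $U$ does not restore it at levels $n\geq 1$. The base case has a second, independent gap: neither ``objectwise Kan fibration'' nor ``local fibration'' imposes any condition in simplicial degree $0$, so there is no reason that sections of $Y_0$ (pulled back along $f$) lift locally through $p_0$; ``invoking that $p$ is a local fibration'' proves nothing here. Some surjectivity on vertices/components is genuinely needed at this point (in the paper's application $p$ is $\Wbar$ of an objectwise surjective homomorphism), and it cannot be manufactured by passing to a finer hypercover.

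The paper's own argument avoids all of this and is much shorter: split hypercovers are cofibrant in the projective structure $U\cC$ (Proposition~\ref{Dugger-split}) and $p$ is a projective fibration, so for each split hypercover $U$ of $1$ the induced map $\map(U,X)\to\map(U,Y)$ is a Kan fibration of simplicial sets; one then argues surjectivity on $\pi_0$ of these mapping spaces, notes that filtered colimits preserve surjections, and identifies the colimit over split hypercovers with $[1,X]\to[1,Y]$ via Theorem~\ref{verdier hypercovering theorem} and the density remark. The point is that the required ``lift up to homotopy'' is encoded by working with $\pi_0$ of mapping spaces rather than by constructing strict lifts over a refinement, so no boundary-filling in $P$ and no degree-$0$ local lifting of simplices is ever attempted. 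If you want to keep your hands-on approach you would have to build $g$ and the homotopy $pg\simeq fv$ simultaneously (not factor strictly through $U\times_Y X$), and you would still need to isolate and use the surjectivity input that makes the degree-$0$ step possible.
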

\begin{proof} 
Since $Y$ is locally fibrant and $p$ is a local fibration, $X$ is locally fibrant.  Since $p\colon X\to Y$ is a 
fibration in $U\cC$, for any split hypercover $U$ of $1$, the map 
\[
\map(U,X)\to \map(U,Y) 
\]
is a Kan fibration and hence is surjective on path components.  Since filtered colimits 
preserve surjections it follows that the map 
\[
\varinjlim_{U\in \pi HR_s(1)} \pi_0\map(U,X)\to 
\varinjlim_{U\in \pi HR_s(1)} \pi_0\map(U,Y)
\]
is surjective, where $\pi HR_s(1)$ denotes the full subcategory of $\pi HR(1)$ consisting 
of the split hypercovers.  By the remarks above this map is isomorphic to the map 
$[1,X]\to [1,Y]$ in $\Ho(U\cC_{\mathcal{L}})$.    
\end{proof}

\subsection{Internal local fibrations} 

Recall the following definition from \cite{Henriques}.  
\begin{definition}[\cite{Henriques}] 
\label{locally fibrant}
We say that a morphism $f\colon X\to Y$ in 
$s\cC$ is a {\em local fibration} if for every $n\geq 1$ and every $0\leq k\leq n$ the map 
\begin{equation} 
\label{horn filling map for fibrns} 
X_n\to Y_n\times_{M_{\Lambda^n_k}Y} M_{\Lambda^n_k}X 
\end{equation}
is a local epimorphism.  We say that a simplicial object $X$ in $\cC$ 
is {\em locally fibrant} if the canonical map $X\to 1$ to the terminal simplicial object is a local fibration: thus 
$X$ is locally fibrant if for every $n\geq 1$ and every $0\leq k\leq n$ the map 
\begin{equation}
\label{horn filling map} 
X_n\to  M_{\Lambda^n_k}X 
\end{equation}
is a local epimorphism.  
We say that a morphism $f\colon X\to Y$ in $s\cC$ is a {\em locally acyclic local fibration} if for every $n\geq 0$ the map 
\[
X_n\to Y_n\times_{M_{\partial \Delta^n}Y}M_{\partial\Delta^n}X 
\]
is a local epimorphism.  Similarly we say that a simplicial object $X$ of $\cC$ is {\em locally acyclic} 
if the map $X\to 1$ to the terminal simplicial object is an acyclic local fibration: thus $X$ is locally acyclic 
if for every $n\geq 0$ the map 
\[
X_n\to M_{\partial\Delta^n}X
\]
is a local epimorphism.    
\end{definition} 

Clearly if $f\colon X\to Y$ is a local fibration in $s\cC$ then the induced map 
$\hat{f}\colon \hat{X}\to \hat{Y}$ on representable simplicial presheaves in $s\Pre(\cC_0)$ 
is a local fibration.  An analogous remark applies if $f\colon X\to Y$ is an acyclic local fibration in $s\cC$.

\begin{lemma}
Let $B$ be an object of $\cC$ and let $f\colon X\to Y$ be a morphism in $s\cC_{/B}$.  Then $f$ is 
a local fibration in $s\cC_{/B}$ if and only if 
the underlying map $f\colon X\to Y$ is a 
local fibration in $s\cC$.  In particular an object $X$ in 
$s\cC_{/B}$ is locally fibrant in $s\cC_{/B}$ if and only if 
the underlying object $X$ is locally fibrant in 
$s\cC$. 
\end{lemma}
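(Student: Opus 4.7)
The plan is to reduce the definition of local fibration in $s\cC/B$ to the corresponding definition in $s\cC$ by exploiting two facts: that the relevant generalized matching objects coincide, and that local epimorphisms in $\cC/B$ and $\cC$ coincide.

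First I would unpack the horn-filling condition. For a morphism $f\colon X\to Y$ in $s\cC/B$ and for $n\geq 1$, $0\leq k\leq n$, we need to compare the map
\[
X_n \to Y_n \times_{M^B_{\Lambda^n_k}Y} M^B_{\Lambda^n_k}X
\]
computed in $\cC/B$ with its counterpart
\[
X_n \to Y_n \times_{M_{\Lambda^n_k}Y} M_{\Lambda^n_k}X
\]
computed in $\cC$. By Lemma~\ref{gen matching objects in C/B} we have $M^B_{\Lambda^n_k}X = M_{\Lambda^n_k}X$ and similarly for $Y$, for all $n\geq 1$ and $0\leq k\leq n$. Combined with the fact recalled just before that lemma that pullbacks in $\cC/B$ are computed as underlying pullbacks in $\cC$ equipped with the canonical map to $B$, we conclude that the two fiber products above are the same object of $\cC$, equipped in one case with its canonical structure map to $B$.

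Second, I would verify that local epimorphisms in $\cC/B$ and $\cC$ agree. The Grothendieck pretopology on $\cC/B$ is defined (as in Section~\ref{sec:groth tops}) so that a family $(U_i\to U)_{i\in I}$ in $\cC/B$ is covering precisely when the underlying family in $\cC$ is. Any section in $\cC$ of a pullback projection $U_i\times_U P\to U_i$ is automatically a morphism over $B$, since composing with the projection recovers the identity on $U_i$ and hence respects structure maps to $B$. Hence a map in $\cC/B$ is a local epimorphism in $\cC/B$ if and only if its underlying map in $\cC$ is a local epimorphism.

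Combining these observations, the map~\eqref{horn filling map for fibrns} is a local epimorphism in $\cC/B$ if and only if the corresponding map in $\cC$ is, so $f$ is a local fibration in $s\cC/B$ precisely when the underlying map is a local fibration in $s\cC$. The ``in particular'' statement follows by applying this to the unique map $X\to 1$, noting that the terminal object of $s\cC/B$ is $B$ viewed as a constant simplicial object and its underlying simplicial object in $\cC$ is $B$, which maps to $1$ in $s\cC$; the argument above applies verbatim because Lemma~\ref{gen matching objects in C/B} also identifies the matching objects $M^B_{\Lambda^n_k}X$ with $M_{\Lambda^n_k}X$. There is essentially no obstacle here; the work is entirely done by Lemma~\ref{gen matching objects in C/B} and the compatibility of the Grothendieck pretopologies on $\cC$ and $\cC/B$.
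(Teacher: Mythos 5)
Your proposal is correct and follows the paper's own argument: the paper proves this lemma exactly by invoking Lemma~\ref{gen matching objects in C/B} to identify $M^B_{\Lambda^n_k}$ with $M_{\Lambda^n_k}$, so that the horn-filling maps agree and the local epimorphism conditions in $\cC/B$ and $\cC$ coincide. Your extra remarks (pullbacks in $\cC/B$ computed in $\cC$, and the agreement of local epimorphisms because covers in $\cC/B$ are defined via their underlying families and sections of the pulled-back projections are automatically over $B$) merely spell out details the paper leaves implicit.
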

\begin{proof} 
$f\colon X\to Y$ is a local fibration in $s\cC_{/B}$ if and only if 
\[
X_n\to M^B_{\Lambda^n_k}X\times_{M^B_{\Lambda^n_k}Y} Y_n 
\]
is a local epimorphism in $\cC_{/B}$ for all $0\leq k\leq n$ and all $n\geq 1$.  Hence 
Lemma~\ref{gen matching objects in C/B} implies that $f\colon X\to Y$ is a local fibration 
in $s\cC$ if and only if 
\[
X_n\to M_{\Lambda^n_k}X\times_{M_{\Lambda^n_k}Y} Y_n 
\]
is a local epimorphism in $\cC$ for all $0\leq k\leq n$ and all $n\geq 1$, i.e.\ $f\colon X\to Y$ is a local fibration in $s\cC$.   
\end{proof}

\begin{lemma}
\label{lem:grp} 
Let $B$ be an object of $\cC$ and let $G$ be a group in $s\cC_{/B}$.  Then $G$ is locally fibrant.
\end{lemma}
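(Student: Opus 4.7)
The plan is to invoke the preceding lemma to reduce to showing the underlying simplicial object $G$ in $s\cC$ is locally fibrant, that is, that each horn-filling map
\[
G_n \to M_{\Lambda^n_k} G
\]
is a local epimorphism in $\cC$ for $n \geq 1$ and $0 \leq k \leq n$. I will establish the stronger claim that this map admits a section in $\cC$, and is therefore a split (hence local) epimorphism; the covering family consisting of $\mathrm{id}\colon M_{\Lambda^n_k}G \to M_{\Lambda^n_k}G$ then witnesses the local epimorphism property via that section.

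The section will be obtained by internalizing Moore's classical proof that a simplicial group is a Kan complex. Classically, given a horn $(g_0, \dots, \widehat{g_k}, \dots, g_n)$ in a simplicial group, one inductively constructs fillers $w_0, w_1, \dots, w_{n-1}$, with each $w_{r+1}$ obtained from $w_r$ by multiplying by a degenerate correction term of the form $s_j\bigl((d_j w_r)^{-1} g_j\bigr)^{\pm 1}$, chosen so as to adjust a single face at a time; the final $w_{n-1}$ is the desired filler. The output is therefore an explicit algebraic word in the horn data built from the group multiplication, inversion, unit, and the simplicial structure maps $d_i$ and $s_j$.

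To transport this into $\cC$, I note that because $G$ is a group object in $s\cC$, the multiplication $G_n \times G_n \to G_n$, the inversion $G_n \to G_n$, the unit $1 \to G_n$, and the face/degeneracy morphisms are all morphisms of $\cC$. The matching object $M_{\Lambda^n_k} G$ is a limit in $\cC$ and comes equipped with canonical projection morphisms to the $G_{j}$ corresponding to the non-degenerate simplices of $\Lambda^n_k$; in particular there are projections to $G_{n-1}$ playing the role of the $g_i$ for $i \neq k$. Plugging these projections into Moore's formulas assembles a single morphism $s\colon M_{\Lambda^n_k} G \to G_n$ in $\cC$, and the simplicial identities together with the inductive bookkeeping that verifies $d_i w_{n-1} = g_i$ for $i \neq k$ in the classical argument now hold on the nose in $\cC$ because each step is a universal equation between morphisms.

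The main thing to be careful about is precisely this last point: that Moore's construction really is a morphism in $\cC$ rather than a choice of filler depending on sections of $M_{\Lambda^n_k}G$. This follows because the formula is uniform and parametric in the horn, so by the universal property of the matching object and the fact that group operations are $\cC$-morphisms, the construction lifts tautologically. With $s$ in hand, the horn-filling map is split, hence a local epimorphism, which by Definition~\ref{locally fibrant} and the preceding lemma gives the result.
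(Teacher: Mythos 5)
Your argument is correct in substance and proves the same strengthened statement that the paper's proof yields --- the horn map $G_n\to M_{\Lambda^n_k}G$ actually admits a section, so it is a split (hence local) epimorphism --- but it gets there by a different route. The paper does not internalize Moore's algorithm directly: it applies the hom-functor $\cC(A,-)$, observes that $\cC(A,G)$ is a group object in $s\Set/\cC(A,B)$ and hence (by the classical fact for simplicial sets) that $\cC(A,G_n)\to M_{\Lambda^n_k}\cC(A,G)\cong \cC(A,M_{\Lambda^n_k}G)$ is surjective, using completeness of $\cC$ to commute $\cC(A,-)$ past the matching object, and then specializes $A=M_{\Lambda^n_k}G$ so that the identity map lifts to the desired section. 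That Yoneda-style specialization buys precisely the point you have to argue by hand, namely that Moore's filler is a uniform algebraic word and therefore assembles into a single morphism of $\cC$ rather than a fibrewise choice; your version makes the section explicit, at the cost of checking that the inductive identities hold as equations of morphisms (they do, because the compatibilities $d_i g_j = d_{j-1} g_i$ among the projections of the horn data hold in the limit $M_{\Lambda^n_k}G$ by its universal property, and the simplicial identities are equations in $\cC$). Both arguments conclude in the same way: a split epimorphism is a local epimorphism since isomorphisms are covering families, and the reduction to the underlying map in $s\cC$ is the same appeal to Lemma~\ref{gen matching objects in C/B}.

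One point to repair in the write-up: $G$ is a group object in $s\cC/B$, not in $s\cC$, so the operations you may invoke are the fibrewise ones --- multiplication $G_n\times_B G_n\to G_n$, inversion over $B$, and a unit given by a section $B\to G_n$, not a map $1\to G_n$. As literally stated (``the multiplication $G_n\times G_n\to G_n$, \dots\ the unit $1\to G_n$'') the formulas do not typecheck. This is not a fatal gap: by Lemma~\ref{gen matching objects in C/B} the matching object and its projections to the $G_{n-1}$ can be taken in $\cC/B$, so every map occurring in Moore's word lies over $B$, all products factor through the fibre products over $B$, and the construction, run internally to $\cC/B$, produces a section there (hence in $\cC$). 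You should phrase the construction that way.
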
 
\begin{proof}
Lemma~\ref{gen matching objects in C/B} shows that it is enough to check that the underlying map 
$G\to B$ in $s\cC$ is locally fibrant.  First let us observe that if $X$ is a group object in $s\Set/Y$, where 
$Y$ is a constant simplicial set, then the map $X\to Y$ is a Kan fibration.    
Returning to the case at hand, for any object $A$ in $\cC$, if we set $X = \cC(A,G)$ and $Y = \cC(A,B)$, then 
$X$ is a group object in $s\Set/Y$ whose group object of $n$-simplices is 
$\cC(A,G_n)$.  Therefore by the observation just made, 
\[
\cC(A,G_n)\to M_{\Lambda^n_k}\cC(A,G) 
\]
is surjective for all $0\leq k\leq n$ and all $n\geq 1$.  Since $\cC$ is complete we have 
\[
M_{\Lambda^n_k}\cC(A,G) = \cC(A,M_{\Lambda^n_k} G).  
\]
Therefore 
\[
\cC(A,G_n)\to \cC(A,M_{\Lambda^n_k} G) 
\]
is surjective for all $0\leq k\leq n$ and all $n\geq 1$.  Taking $A = M_{\Lambda^n_k} G$ gives the result.  
\end{proof}

\begin{corollary}
\label{corr:homo to constant grp}
Suppose that $f\colon G\to H$ is a homomorphism of group objects in $s\cC_{/B}$ where $H$ is the 
constant simplicial object associated to a group object $H$ in $\cC_{/B}$.  Then $f\colon G\to H$ is a local fibration.   
\end{corollary}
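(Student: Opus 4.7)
The plan is to reduce this to Lemma~\ref{lem:grp} by exploiting the fact that matching objects of a constant simplicial object, taken along any connected simplicial set, collapse to the object itself. This is essentially the same observation that drives the proof of Lemma~\ref{gen matching objects in C/B}.

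First I would verify that $M_{\Lambda^n_k}H \cong H$ for all $n \geq 1$ and $0 \leq k \leq n$, where $H$ is regarded as a constant simplicial object in $\cC/B$. By definition $M_{\Lambda^n_k}H$ is the limit over $(\Delta\downarrow \Lambda^n_k)^\op$ of the constant diagram at $H$, and any such limit equals $H$ provided the indexing category is connected and nonempty. Nonemptyness is clear, and connectedness follows exactly as in the proof of Lemma~\ref{gen matching objects in C/B}: one has a weak equivalence $N(\Delta\downarrow \Lambda^n_k) \simeq \Lambda^n_k$, and $|\Lambda^n_k|$ is connected (the case $n = 1$ being trivial, since $\Lambda^1_k$ is a point).

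With this computation in hand, the structure map $H_n \to M_{\Lambda^n_k}H$ is an isomorphism, so the pullback appearing in Definition~\ref{locally fibrant} simplifies:
\[
H_n \times_{M_{\Lambda^n_k}H} M_{\Lambda^n_k}G \;\cong\; M_{\Lambda^n_k}G.
\]
Hence the horn-filling map whose local epimorphism property defines the condition "$f$ is a local fibration" is identified with the canonical map $G_n \to M_{\Lambda^n_k}G$. By Lemma~\ref{lem:grp} the group object $G$ in $s\cC/B$ is locally fibrant, which by Definition~\ref{locally fibrant} means precisely that each such map is a local epimorphism, so we are done. There is no substantial obstacle in the argument; the corollary is essentially a direct consequence of Lemma~\ref{lem:grp} once one has identified the matching objects of the constant simplicial object $H$.
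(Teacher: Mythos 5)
Your proposal is correct and follows the paper's own argument: the paper likewise notes that $M_{\Lambda^n_k}H = H_n$ for the constant object $H$, so that the relative horn-filling map collapses to $G_n \to M_{\Lambda^n_k}G$, and then invokes Lemma~\ref{lem:grp}. Your extra justification of the collapse via connectedness of $\Delta\downarrow\Lambda^n_k$ is a harmless elaboration of the same step.
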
 
\begin{proof}
We need to check that the maps 
\[
G_n\to M_{\Lambda^n_k}G\times_{M_{\Lambda^n_k}H} H_n 
\]
are local epimorphisms in $\cC$ for all $0\leq k\leq n$ and all $n\geq 1$.  Since $H$ is constant we have 
\[
M_{\Lambda^n_k}H = H_n 
\]
and so the result follows by Lemma~\ref{lem:grp}.   
\end{proof}

\begin{corollary} 
\label{corr:surj homo of grps}
Suppose that $f\colon G\to H$ is a homomorphism between group objects in $s\cC_{/B}$ such that $f_n\colon G_n\to H_n$ 
is a local epimorphism for all $n\geq 0$.  Then $f$ is a local fibration.  
\end{corollary}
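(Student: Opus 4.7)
The plan is to adapt to the local setting the classical argument showing that a surjective homomorphism of simplicial groups is a Kan fibration. We must prove that the canonical map
\[
G_n \longrightarrow P := H_n \times_{M_{\Lambda^n_k}H} M_{\Lambda^n_k}G
\]
is a local epimorphism for every $0\leq k\leq n$ and $n\geq 1$. Since local epimorphisms are stable under pullback, this is equivalent to producing, after passing to a cover of $P$, a lift of the tautological pair $(h,x)$---where $h$ is a section of $H_n$ and $x$ is a section of $M_{\Lambda^n_k}G$ satisfying $f(x)=h|_{\Lambda^n_k}$ in $M_{\Lambda^n_k}H$---to a section $g$ of $G_n$ with $g|_{\Lambda^n_k}=x$ and $f(g)=h$.

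First, by Lemma~\ref{lem:grp} applied to $G$, the map $G_n\to M_{\Lambda^n_k}G$ is a local epimorphism, so on a refinement of the cover we can find $g_0\in G_n$ with $g_0|_{\Lambda^n_k}=x$. Then $h' := f(g_0)^{-1}\cdot h$ lies in the kernel of $H_n\to M_{\Lambda^n_k}H$, since its restriction to $\Lambda^n_k$ is $f(x)^{-1}\cdot f(x)=e$. Using the hypothesis that $f_n$ is a local epimorphism, after a further refinement we obtain $\tilde g\in G_n$ with $f(\tilde g)=h'$. The horn $\tilde g|_{\Lambda^n_k}$ has trivial image in $M_{\Lambda^n_k}H$, so, since $M_{\Lambda^n_k}$ is a right adjoint and therefore preserves limits, it factors uniquely through $M_{\Lambda^n_k}K=\ker(M_{\Lambda^n_k}f)$, where $K$ is the group in $s\cC/B$ obtained as the pullback of the unit section $B\to H$ along $f$.

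Finally, Lemma~\ref{lem:grp} applied to $K$ shows that $K_n\to M_{\Lambda^n_k}K$ is a local epimorphism, so on a third refinement we obtain $\kappa\in K_n$ with $\kappa|_{\Lambda^n_k}=\tilde g|_{\Lambda^n_k}$. Setting $g := g_0\cdot\tilde g\cdot\kappa^{-1}$, the identity $f(\kappa)=e$ yields
\[
f(g)=f(g_0)\cdot h'\cdot e = h, \qquad g|_{\Lambda^n_k}=x\cdot\tilde g|_{\Lambda^n_k}\cdot\bigl(\tilde g|_{\Lambda^n_k}\bigr)^{-1}=x,
\]
so $g$ is the desired lift. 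The delicate point is this kernel correction: one cannot independently lift $h$ and adjust the horn, because any lift of $h$ may disturb the horn; the two conditions become simultaneously satisfiable only because $K$ is itself a group object in $s\cC/B$, so Lemma~\ref{lem:grp} ensures its horns can be filled locally.
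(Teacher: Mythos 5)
Your argument is correct. The paper gives no proof of this corollary (it is explicitly left to the reader), and what you have written supplies it in the expected way: it is the internal, local-epimorphism version of the classical argument that a levelwise surjective homomorphism of simplicial groups is a Kan fibration, with the crucial kernel correction supplied by Lemma~\ref{lem:grp} applied to the group object $\ker(f)$ (note $M_{\Lambda^n_k}\ker(f)=\ker(M_{\Lambda^n_k}f)$ since matching objects preserve limits), and with the successive refinements assembling into a genuine covering family of $P$ by pullback-stability and transitivity of covers. One minor simplification worth noting: the proof of Lemma~\ref{lem:grp} actually produces \emph{global} sections of the horn maps of any group object in $s\cC/B$, so the first and third refinements are unnecessary and only the lift of $h'$ through $f_n$ genuinely requires passing to a cover.
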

\begin{proof}
We leave the proof of this to the reader.  
\end{proof}

\begin{lemma} 
\label{henriques lemma}
Let $B$ be an object of $\cC$ and let $f\colon X\to Y$ be a local fibration in $\cC_{/B}$.  Let $K\subset L$ be 
a weak equivalence of finite simplicial sets.  Then the map 
\[ 
M_L X\to M_K X\times_{M_K Y}M_L Y
\] 
is a local epimorphism in $\cC_{/B}$.  
\end{lemma}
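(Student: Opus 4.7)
The plan is the standard anodyne-extension reduction. Let $\mathcal{E}$ be the class of monomorphisms $K \hookrightarrow L$ of finite simplicial sets for which the conclusion of the lemma holds (with respect to the fixed local fibration $f$). I will show that $\mathcal{E}$ contains every horn inclusion and is closed under pushouts in $\sSet$, under finite composition, and under retracts. Since every trivial cofibration of finite simplicial sets lies in the saturation of the horn inclusions under these operations---by the small object argument together with two-out-of-three applied to the factorization into a $J$-cell complex followed by a Kan fibration---this suffices.

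For a horn inclusion $\Lambda^n_k \subset \Delta^n$ we have $M_{\Delta^n}X = X_n$ and $M_{\Delta^n}Y = Y_n$, and Lemma~\ref{gen matching objects in C/B} identifies the matching objects over $B$ with those in $\cC$; the required statement becomes precisely the horn-filling condition of Definition~\ref{locally fibrant} applied to $f$. For closure under pushouts, let $L' = K' \cup_K L$ with $K \to L \in \mathcal{E}$. Since $K \mapsto M_K X$ is a right adjoint it turns this pushout into a pullback, yielding $M_{L'}X = M_{K'}X \times_{M_K X} M_L X$ and similarly for $Y$. A short iterated-fiber-product calculation then identifies
\[
M_{L'}X \to M_{K'}X \times_{M_{K'}Y} M_{L'}Y
\]
as the pullback of the hypothesized local epimorphism $M_L X \to M_K X \times_{M_K Y} M_L Y$ along $M_{K'}X \to M_K X$; local epimorphisms are stable under pullback.

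For closure under composition $K \to L \to L'$, I factor the map of interest as
\[
M_{L'}X \longrightarrow M_L X \times_{M_L Y} M_{L'}Y \longrightarrow M_K X \times_{M_K Y} M_{L'}Y.
\]
The first arrow is the hypothesized local epimorphism for $L \to L'$, and the second is the pullback of the hypothesized local epimorphism for $K \to L$ along the restriction $M_{L'}Y \to M_L Y$. Closure under composition and pullback of local epimorphisms then does the job; closure of $\mathcal{E}$ under retracts is immediate from the same property of local epimorphisms.

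The main obstacle I anticipate is the bookkeeping in the pushout step: one must carefully track how iterated fiber products collapse in order to identify the map in question as a genuine pullback of the hypothesized local epimorphism, rather than merely sitting in a commutative square with it.
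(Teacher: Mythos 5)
Your proposal is correct and follows essentially the same route as the paper: reduce to the horn-filling condition by showing the class of monomorphisms for which the conclusion holds contains the horn inclusions and is closed under pushout, finite composition, and retract (the paper phrases this as the local RLP against $J\to J\cup_{\Lambda^n_k}\Delta^n$, closure under retracts, and the fact that $K\to L$ is a retract of a finite composition of pushouts of horns). The only point to tighten is your final citation: the small object argument by itself exhibits $K\to L$ as a retract of an \emph{infinite} $J$-cell complex $K\to E$, which your closure properties (finite composition only) do not cover; since $L$ is a finite simplicial set, the section $L\to E$ factors through a finite relative subcomplex of $E$ built from $K$ by attaching finitely many horns, which yields exactly the finite-retract statement the paper asserts --- and this is where the finiteness hypothesis of the lemma genuinely enters, as local epimorphisms need not be stable under inverse limits of infinite towers of matching objects.
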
 
\begin{proof} 
The proof of this is just an obvious reformulation of the proof of the corresponding fact for simplicial sets.  
In more detail, observe that $f\colon X\to Y$ has the local 
RLP with respect to any map of the form 
$J\to J\cup_{\Lambda^n_k} \Delta^n$ 
for any simplicial set $J$.  Similarly 
observe that if $f\colon X\to Y$ has the local 
RLP with respect to $I\to J$ and $K\to L$ 
is a retract of $I\to J$, then $f\colon X\to Y$ 
has the local RLP with respect to $K\to L$.      
Finally, $K\to L$ is a retract of a finite 
composition of pushouts of maps of the 
form $\Lambda^n_k\to\Delta^n$. 
\end{proof} 

\subsection{A criterion for local acyclicity}

Our first main technical result gives a criterion to detect when certain homomorphisms between group objects in $s\cC_{/B}$ are 
locally acyclic local fibrations.     
\begin{proposition} 
\label{contractibility propn}  
Let $B$ be an object of $\cC$.  
Suppose that $H$ is a group object in $\cC_{/B}$, and that 
$f\colon G\to H$ is a homomorphism between group objects in $s\cC_{/B}$, where $H$ is regarded as a constant 
simplicial object in $\cC_{/B}$.  Then $f\colon G\to H$ is a locally acyclic local fibration in $s\cC_{/B}$ 
for the induced topology on $\cC_{/B}$ if the following 
conditions hold:
\begin{enumerate}
\item 
$G_0\to H$ is a local epimorphism in $\cC_{/B}$,  

\item $G_1\to G_0\times_H G_0$ is a local epimorphism in $\cC_{/B}$, 

\item the underlying map $G_n\to M_{\partial\Delta^n}G$ is an effective epimorphism in $\cC$ for all $n\geq 2$.  
\end{enumerate}
\end{proposition}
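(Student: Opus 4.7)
The plan is to verify, dimension by dimension, the condition defining a locally acyclic local fibration: that for every $n\geq 0$ the comparison map
\[
G_n \longrightarrow H_n \times_{M^B_{\partial\Delta^n}H} M^B_{\partial\Delta^n}G
\]
is a local epimorphism in $\cC/B$. The key simplification is that $H$ is a constant simplicial object, so the matching objects $M^B_{\partial\Delta^n}H$ depend only on the connectivity of $\partial\Delta^n$, and Lemma~\ref{gen matching objects in C/B} lets us replace $M^B_{\partial\Delta^n}$ by $M_{\partial\Delta^n}$ computed in $\cC$ once $n\geq 2$. The three hypotheses of the proposition then correspond to the three qualitatively distinct cases $n=0$, $n=1$, and $n\geq 2$.

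For $n=0$, $\partial\Delta^0 = \emptyset$ forces the pullback to collapse to $H_0 = H$, and the comparison map is just $G_0\to H$, which is hypothesis (1). For $n=1$, $\partial\Delta^1$ consists of two disjoint vertices, so $M^B_{\partial\Delta^1}G = G_0\times_B G_0$ and $M^B_{\partial\Delta^1}H = H\times_B H$. Since $H$ is constant and a group object, the face maps $d_0,d_1\colon H_1 = H\to H$ are both the identity, so $H_1\to H\times_B H$ is the diagonal; the pullback is identified with $G_0\times_H G_0$, and the comparison map $G_1\to G_0\times_H G_0$ is exactly hypothesis (2). For $n\geq 2$, $\partial\Delta^n$ is connected, so $M^B_{\partial\Delta^n}H = H$ and $H_n\to M^B_{\partial\Delta^n}H$ is the identity. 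The pullback collapses to $M^B_{\partial\Delta^n}G$, which by Lemma~\ref{gen matching objects in C/B} agrees with the underlying matching object $M_{\partial\Delta^n}G$ in $\cC$, so the comparison map reduces to $G_n\to M_{\partial\Delta^n}G$.

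The main obstacle I expect is this last case: hypothesis (3) supplies only an effective epimorphism in $\cC$, while the definition demands a local epimorphism in $\cC/B$. My plan is to close the gap using the group structure. Since matching objects are constructed from limits, $M_{\partial\Delta^n}G$ is a group object in $\cC/B$ and the matching map is a homomorphism; in the intended topological application ($\cC = \cK$ with the open cover topology) an effective epimorphism between group objects can, under the mild sectioning hypotheses in force, be promoted to a local epimorphism by exhibiting a local section near the identity and translating it by the group action. Combined with the local sections already supplied by hypothesis (1), this should yield the required covering family over which the matching map splits, completing the verification.
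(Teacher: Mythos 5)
Your reduction to the three cases $n=0$, $n=1$, $n\geq 2$ is exactly the paper's, and the first two cases are handled correctly: the collapse of the pullback using the constancy of $H$, the identification of $H_1\to H\times_B H$ with the diagonal, and the use of Lemma~\ref{gen matching objects in C/B} to pass from $M^B_{\partial\Delta^n}$ to $M_{\partial\Delta^n}$ for $n\geq 2$ all match. The gap is in your treatment of the case $n\geq 2$. Your plan to promote the effective epimorphism $G_n\to M_{\partial\Delta^n}G$ to a local epimorphism by ``exhibiting a local section near the identity and translating it by the group action'' is not justified: an effective epimorphism of group objects in $\cK$ (a surjective quotient homomorphism) need not admit any local sections, and nothing in the proposition supplies the ``mild sectioning hypotheses'' you invoke --- the well-sectioned hypothesis belongs to the later application, not to this statement, which is moreover formulated for an arbitrary $\cC$ with a Grothendieck pretopology, not just $\cC=\cK$. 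Hypothesis (1) is also of no help here, since it concerns $G_0\to H$ and says nothing about the matching maps in higher degrees.

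What actually closes the gap in the paper is a stronger structural fact: under the hypotheses the map $\partial_n\colon G_n\to M_{\partial\Delta^n}G$ admits a \emph{global} section. The argument has two steps. First, by the Yoneda lemma one reduces to group objects in $s\Set/K$ for $K$ a constant simplicial set, where one constructs a $\ker(\partial_n)$-equivariant retraction $r_n\colon G_n\to\ker(\partial_n)$, natural in $G$, by composing the maps $g\mapsto s_{m+1}d_{m+1}(g)^{-1}g$ through the chain $G_n\to\ker(d_0)\to\ker(d_0)\cap\ker(d_1)\to\cdots\to\ker(\partial_n)$ (the classical ``simplicial groups are Kan'' manipulation). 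Second, a purely categorical lemma: if $\phi\colon G\to H$ is a homomorphism of group objects in a finitely complete category, $\ker(\phi)$ is a retract of $G$, and $\phi$ is an effective epimorphism, then $\phi$ has a section --- one uses $G\times_H G\cong G\times\ker(\phi)$ to show that the map $\hat{s}(g)=g\cdot r(g)^{-1}$ coequalizes the kernel pair and hence descends along the coequalizer $\phi$ to the desired section. This is where hypothesis (3), the effective epimorphism condition, is really used; without some such mechanism your outline does not establish even local surjectivity of $G_n\to M_{\partial\Delta^n}G$ for $n\geq 2$.
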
 
\begin{proof}
We need to prove that 
\[
G_n\to M^B_{\partial\Delta^n}G\times_{M^B_{\partial\Delta^n}H}H_n
\]
is a local epimorphism in $\cC_{/B}$ for all $n\geq 0$.  When $n=0$ this reduces to the requirement that $G_0\to H$ is a local 
epimorphism.  When $n=1$ we have $M^B_{\partial\Delta^1}G = G_0\times_B G_0$ and $M^B_{\partial\Delta^1}H = H\times_B H$, so we need to prove that 
\[
G_1 \to (G_0\times_B G_0)\times_{H\times_B H} H
\]
is a local epimorphism in $\cC_{/B}$ --- which reduces to the requirement that $G_1\to G_0\times_H G_0$ is a local epimorphism.  

Therefore the proposition reduces to the claim that under the given 
hypotheses $\partial_n\colon G_n\to M_{\partial\Delta^n}G$ is a local epimorphism in $\cC$ for all $n\geq 2$.  
We will show that in fact the given hypotheses imply that this 
map has a section.  

Let $\ker(\partial_n)$ denote the group object $1\times_{M_{\partial\Delta^n}G}G_n$ in $\cC_{/B}$.  We will 
first construct a $\ker(\partial_n)$ equivariant retraction $r_n\colon G_n\to \ker(\partial_n)$ in $\cC_{/B}$.  To do this 
we will show that it suffices to construct an analogous retraction for simplicial sets.  By the 
Yoneda Lemma, it suffices to find a $\cC(-,\ker(\partial_n))$ equivariant retraction 
\[
r\colon \cC(-,G_n)\to \cC(-,\ker(\partial_n)) 
\]
of the map of representable presheaves 
\[
\cC(-,\ker(\partial_n))\to \cC(-,G_n) 
\]
induced by the homomorphism of group objects $\ker(\partial_n)\to G_n$.  Let $A$ be an object of $\cC$.  Then $\cC(A,G)$ 
is a group object in $s\Set/K$, where $K$ denotes the constant simplicial set $\cC(A,B)$.  The 
set of $n$-simplices of $\cC(A,G)$ is the group $\cC(A,G_n)$ over $K$.  Since the functor 
$\cC(A,-)$ preserves limits, we have an isomorphism 
\[
\cC(A,M_{\partial\Delta^n}G) = M_{\partial\Delta^n}\cC(A,G)  
\]
in $s\Set/K$.
Therefore the homomorphism $\cC(A,G_n)\to \cC(A,M_{\partial\Delta^n}G)$ is isomorphic 
to the homomorphism $\cC(A,G)_n\to M_{\partial\Delta^n}\cC(A,G)$ and hence 
$\cC(A,\ker(\partial_n))$ is isomorphic to the kernel of the homomorphism 
$\cC(A,G)_n\to M_{\partial\Delta^n}\cC(A,G)$ of group objects in $s\Set/K$.  Hence it suffices to prove that for any group 
$G$ in $s\Set/K$, there is a $\ker(\partial_n)$ equivariant retraction $r_n\colon G_n\to \ker(\partial_n)$ which is 
natural with respect to homomorphisms $G\to G'$ of group objects in $\sSet_{/K}$.    

To prove this, first observe that if $G$ is a group 
object in $\sSet_{/K}$ then $\ker(\partial_n)$ is the set of $n$-simplices $g\in G_n$ such that 
$d_i(g) = 1$ for all $0\leq i\leq n$.  Define a map 
\begin{gather*} 
G_n\to \ker(d_0) \\ 
g\mapsto s_0d_0(g)^{-1}g. 
\end{gather*} 
Note that this map is the identity on $\ker(\partial_n)$ and is $\ker(\partial_n)$ equivariant.  
Now let $0\leq m<n$ and define a map 
\[ 
\bigcap_{0\leq i\leq m} \ker(d_i) \to \bigcap_{0\leq i\leq m+1}\ker(d_i)  
\]
by sending $g\mapsto s_{m+1}d_{m+1}(g)^{-1}g$.  
To see that the image of this map is contained in the subgroup $\bigcap_{0\leq i\leq m+1}\ker(d_i)$, we note 
that if $i<m+1$ then 
\[
d_is_{m+1}d_{m+1}(g)  = s_md_id_{m+1}(g)  
 = s_md_md_i(g)  = 1.  
\]
Note that this map again restricts to the identity on $\ker(\partial_n)$ and is 
$\ker(\partial_n)$ equivariant.  We have constructed a sequence of $\ker(\partial_n)$ 
equivariant maps 
\[
G_n \to \ker(d_0) \to \ker(d_0)\cap \ker(d_1) \to \cdots \to \ker(\partial_n)
\]
which restrict to the identity on $\ker(\partial_n)$.  Composing these maps 
gives the required retraction $r_n$ which is clearly natural with respect to 
homomorphisms $G\to G'$ of group objects in $\sSet_{/K}$.  

To complete the proof we need to show that the existence of the $\ker(\partial_n)$ equivariant 
retraction $r_n\colon G_n\to \ker(\partial_n)$ in $\cC_{/B}$ implies the existence of a section of 
$\partial_n\colon G_n\to M_{\partial\Delta^n}G$ in $\cC_{/B}$.  For this we have the following Lemma.   

\begin{lemma} 
Let $\cD$ be a category with finite limits and let 
$\phi\colon G\to H$ be a homomorphism between 
group objects in $\cD$.  Suppose that $\ker(\phi)$ 
is a retract of $G$.  If $\phi\colon G\to H$ is an 
effective epimorphism in $\cD$ then there is a section 
of $\phi$.  
\end{lemma}

\begin{proof}
As we will shortly observe in Section~\ref{sec:simplicial torsors} below, 
there is a canonical isomorphism 
\begin{equation}
\label{pullback iso}
G\times_H G\cong G\times \ker(\phi). 
\end{equation}
Given a retraction $r\colon G\to \ker(\phi)$, define a map 
$\hat{s}\colon G\to G$ by the composite 
\[
G\xrightarrow{(1,r^{-1})} G\times \ker(\phi)\xrightarrow{1\times i} 
G\times G\stackrel{m}{\to} G. 
\]
Note that $\phi\hat{s} = \phi$.  Note also that 
$\hat{s}$ is $\ker(\phi)$ invariant in the 
sense that the diagram 
\[
\xymatrix{ 
G\times\ker(\phi) \ar[d]_-{p_1} \ar[r]^-m & G \ar[d]^-{\hat{s}} \\ 
G \ar[r]_-{\hat{s}} & G }
\]
in $\cD$ commutes.  From the isomorphism~\eqref{pullback iso}  
above it follows that 
$\hat{s}p_1 = \hat{s}p_2$, 
where $p_1$ and $p_2$ denote the canonical projections 
in the coequalizer diagram 
\[
\xymatrix{G\times_H G \ar@<1ex>[r]^-{p_1} \ar@<-1ex>[r]_-{p_2} & G 
\ar[r]^-{\phi} & H. } 
\]
Therefore there is a unique map $s\colon H\to G$ such that $s\phi = \hat{s}$.  
Hence $\phi s\phi = \phi$ from which it follows that $\phi s = \mathrm{id}$, i.e.\ 
$s$ is a section of $\phi$.  
\end{proof}   
To conclude the proof we observe that since pullbacks and colimits in $\cC_{/B}$ are computed in $\cC$, 
if the underlying map $G_n\to M_{\partial\Delta^n}G$ is an effective epimorphism 
in $\cC$ then the map $G_n\to M_{\partial\Delta^n}G$ is an effective epimorphism 
in $\cC_{/B}$.  
\end{proof} 

\section{Non-abelian cohomology} 
\label{sec:nonabelian}

In this section we study some examples of the internal local homotopy theory 
of the previous section, before we 
recall the definition and some properties of non-abelian 
cohomology.  Throughout this section 
$\cC$ denotes a category equipped with a Grothendieck pretopology, if we need to 
we will suppose that there exists a small subcategory $\cC_0$ of $\cC$ satisfying 
the condition described in Section~\ref{sec:groth tops}.  We begin with 
a discussion of simplicial torsors.  

\subsection{Simplicial torsors} 
\label{sec:simplicial torsors}
One source of examples of local Kan fibrations comes from 
torsors in $s\cC$.  These are studied in detail in the monograph \cite{Duskin-Mem} of Duskin where they 
are used to give an interpretation of cohomology for monads.  
Recall that if $G$ is a group in $s\cC$ and $X$ is an 
object of $s\cC$, then 
a {\em $G$-torsor} in $s\cC$ over $X$ consists of an object 
$P$ of $s\cC/X$ equipped with an action $P\times G\to P$ 
of the group object $X\times G$ in $s\cC/X$ such that the diagram    
\begin{equation} 
\label{pullback diagram for a torsor}
\begin{xy} 
(-8,7.5)*+{P\times G}="1"; 
(-8,-7.5)*+{P}="2"; 
(8,7.5)*+{P}="3"; 
(8,-7.5)*+{X}="4"; 
{\ar "1";"3"}; 
{\ar_-{p_1} "1";"2"}; 
{\ar "2";"4"}; 
{\ar "3";"4"};
\end{xy}
\end{equation}
is a pullback in $s\cC$.  Note that this definition can be expressed entirely 
in terms of finite limits in $s\cC$.  Of course the notion of $G$-torsor over an object $X$ makes sense 
in any category $\cE$ with finite limits for $G$ a group in $\cE$ --- in particular it follows that for $P$ 
and $G$ as above, $P_n$ is a $G_n$ torsor in $\cC$ over $M_n$ for all $n\geq 0$.  

A recurring example of a torsor in this paper is the following: if $\phi\colon G\to H$ is a homomorphism of group objects 
in $s\cC$, then $G$ is a $\ker(\phi)$ torsor over $H$ in $s\cC$.  One way to see that the diagram~\eqref{pullback diagram for a torsor} is a pullback 
in this case is to write it as the composite of the two pullback squares 
\[
\xymatrix{ 
G\times \ker(\phi)\ar[d] \ar[r] & G\times G \ar[d] \ar[r] & G \ar[d] \\ 
G \ar[r] & G\times H \ar[r] & H } 
\]
where the left square is a pullback by definition of $\ker(\phi)$ 
and where the horizontal maps in the right square 
are given by multiplication in $G$ and the composite of 
$\phi\times 1$ and multiplication in $H$ respectively.  
 
We will say that a $G$-torsor $P\to X$ in $s\cC_{/B}$ is {\em locally 
trivial} if each map $P_n\to X_n$ is a local epimorphism in $\cC_{/B}$.   
When $\cC = \cK$ then the notion of locally trivial $G$-torsor for a 
group object in $s\cK_{/B}$ reduces to the notion of 
simplicial parametrized principal bundle in $\cK_{/B}$.  Recall (see 
for example \cite{RS}, Definition 13) that a simplicial parametrized principal bundle in 
$s\cK_{/B}$ consists of a map 
$P\to X$ in $s\cK_{/B}$ together with an action $P\times G\to P$ of $P$ on 
$G$ such that each map $P_n\to X_n$ inherits the structure 
of a parametrized principal $G_n$-bundle in $\cK_{/B}$ and all 
face and degeneracy maps are morphisms of parametrized principal 
bundles.

We have the following result, directly analogous to the 
familiar result in the category of simplicial sets which states that 
every principal bundle in $s\Set$ is a Kan fibration.  
\begin{lemma}
\label{lem:torsors are local fibns}
Let $G$ be a group in $s\cC_{/B}$ and let $P\to X$ be a locally trivial $G$-torsor over $X$ in $s\cC_{/B}$ .  Then $P\to X$ is a 
local Kan fibration in $s\cC_{/B}$.  Moreover, if each $P_n\to X_n$ admits a global section, the 
maps 
\[
P_n \to M_{\Lambda^n_k}P\times_{M_{\Lambda^n_k}X} X_n
\]
admit global sections in $\cC_{/B}$.   
\end{lemma}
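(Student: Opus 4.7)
The plan is to prove the second (``moreover'') assertion first and then deduce the local fibration property by applying the resulting construction over a cover supplied by local triviality. For the moreover clause, the key observation is that since the matching object functor $M_K\colon s\cC/B \to \cC/B$ is a right adjoint for every simplicial set $K$ (see the discussion preceding Lemma~\ref{adjoint for bisimplicial matching}), it preserves finite limits. Applying $M_{\Lambda^n_k}$ to the torsor pullback square~\eqref{pullback diagram for a torsor} therefore exhibits $M_{\Lambda^n_k}P$ as an $M_{\Lambda^n_k}G$-torsor over $M_{\Lambda^n_k}X$ in $\cC/B$; in particular there is a canonical ``difference'' isomorphism
\[
M_{\Lambda^n_k}P \times_{M_{\Lambda^n_k}X} M_{\Lambda^n_k}P \cong M_{\Lambda^n_k}P \times_B M_{\Lambda^n_k}G.
\]
Suppose now that $s\colon X_n \to P_n$ is a global section in $\cC/B$. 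A generalized point $(h,x)$ of $M_{\Lambda^n_k}P \times_{M_{\Lambda^n_k}X} X_n$ then determines two points of $M_{\Lambda^n_k}P$ lying over the horn of $x$, namely $h$ itself and the horn of $s(x)$; the isomorphism above identifies them via a unique $g(h,x) \in M_{\Lambda^n_k}G$, producing a map $g\colon M_{\Lambda^n_k}P \times_{M_{\Lambda^n_k}X} X_n \to M_{\Lambda^n_k}G$ in $\cC/B$.

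Next I would lift $g$ to $G_n$ using a global section $\tau\colon M_{\Lambda^n_k}G \to G_n$ of the horn-filler map for the simplicial group $G$. Such a $\tau$ is furnished by a closer reading of the proof of Lemma~\ref{lem:grp}: the Yoneda argument used there, combined with the classical fact that a group object in $\sSet/Y$ for $Y$ a constant simplicial set is a Kan fibration over $Y$, shows that $\cC(A, G_n) \to \cC(A, M_{\Lambda^n_k}G)$ is surjective for every $A$ in $\cC/B$; taking $A = M_{\Lambda^n_k}G$ and a preimage of the identity yields $\tau$. With $\tau$ in hand, the assignment $\sigma(h,x) = s(x) \cdot \tau(g(h,x))$, using the action of $G_n$ on $P_n$, gives the required section of $P_n \to M_{\Lambda^n_k}P \times_{M_{\Lambda^n_k}X} X_n$; that $\sigma$ is indeed a section follows from $\tau$ being a section together with the freeness of the action, translated through the difference isomorphism.

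For the local fibration assertion, given any generalized point $(h,x)\colon T \to M_{\Lambda^n_k}P \times_{M_{\Lambda^n_k}X} X_n$ in $\cC/B$, local triviality of the torsor says that the composite $T \to X_n$ admits a lift to $P_n$ over some covering family $\{T_i \to T\}$; over each $T_i$ the pulled-back torsor admits a global section at level $n$, and the construction of the previous paragraph furnishes a local lift of $(h,x)|_{T_i}$ to $P_n$, which is exactly what is needed. The main technical subtlety is the production of the global section $\tau$, which is not stated explicitly in Lemma~\ref{lem:grp} but is already present in its proof; checking that the classical horn-filling formula for a simplicial group is natural enough to be obtained internally via Yoneda in $\cC/B$ is the only step that goes beyond routine verification.
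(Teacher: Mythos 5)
Your proposal is correct and follows essentially the same route as the paper: apply the limit-preserving functor $M_{\Lambda^n_k}$ to exhibit $M_{\Lambda^n_k}P\times_{M_{\Lambda^n_k}X}X_n\to X_n$ as an $M_{\Lambda^n_k}G$-torsor, then correct a (global, or locally given) section of $P_n\to X_n$ by the difference element, lifted to $G_n$ via the splitting contained in the proof of Lemma~\ref{lem:grp}. The paper phrases this as ``rescaling'' the map induced by the local sections over the cover $X_{n,i}\times_{X_n}Y_{n,k}$, which is exactly your $\sigma(h,x)=s(x)\cdot\tau(g(h,x))$ construction, so the two arguments coincide in substance.
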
  
\begin{proof}
  We need to show that the maps 
\[
h^n_k\colon P_n\to M_{\Lambda^n_k}P\times_{M_{\Lambda^n_k}X} X_n 
\]
are local epimorphisms in $\cC_{/B}$. 
Since the functor $M_{\Lambda^n_k}(-)$ is a right adjoint, it sends 
torsors in $s\cC_{/B}$ to torsors in $\cC_{/B}$.  In particular, $M_{\Lambda^n_k}P\to 
M_{\Lambda^n_k}X$ is a $M_{\Lambda^n_k}G$-torsor in $\cC_{/B}$.   Hence 
$M_{\Lambda^n_k}P\times_{M_{\Lambda^n_k}X} X_n\to X_n$ 
is a $M_{\Lambda^n_k}G$-torsor in $\cC_{/B}$.  For convenience of 
notation write $Y_{n,k} =  M_{\Lambda^n_k}P\times_{M_{\Lambda^n_k}X} X_n$.

Let
$(X_{n,i})_{i\in I}$ be a covering family of $X_n$ and let  
$\sigma_i$ be a section of $X_{n,i}\times_{X_n} P_n\to X_{n,i}$ 
for every $i\in I$.  For every $i\in I$ we have a 
commutative diagram 
\[
\xymatrix{ 
X_{n,i}\times_{X_n} P_n \ar[d] \ar[r] & P_n \ar[d] \\ 
X_{n,i}\times_{X_n} Y_{n,k} \ar[d] \ar[r] & Y_{n,k} \ar[d]\\ 
X_{n,i} \ar[r] & X_n } 
\]
in which each square is a pullback.  The section $\sigma_i$ induces a map 
$X_{n,i}\times_{X_n}Y_{n,k}\to P_n$ such that the projection to $Y_{n,k}$ 
differs from the canonical projection $X_{n,i}\times_{X_n}Y_{n,k}\to Y_{n,k}$ 
by a map $X_{n,i}\times_{X_n}Y_{n,k}\to M_{\Lambda^n_k}G$.  By Lemma~\ref{lem:grp} 
there exists a lift $X_{n,i}\times_{X_n}Y_{n,k}\to G_n$ which can be used to 
rescale the induced map $X_{n,i}\times_{X_n}Y_{n,k}\to P_n$ 
to obtain a diagonal filler in the diagram 
\[
\xymatrix{ 
X_{n,i}\times_{X_n} P_n \ar[d] \ar[r] & P_n \ar[d] \\ 
X_{n,i}\times_{X_n} Y_{n,k} \ar@{.>}[ur] \ar[r] & Y_{n,k} } 
\]
for every $i\in I$.  These maps, together with the induced covering 
family $(X_{n,i}\times_{X_n} Y_{n,k}\to Y_{n,k})_{i\in I}$, exhibit 
the maps $h^n_k$ as local epimorphisms in $\cC_{/B}$.   
\end{proof}

\subsection{Twisted cartesian products and the $\overline{W}$ construction} 

Suppose that $P$ is a $G$-torsor over $M$ 
and $U\colon \cC\to \cD$ is a functor.  Following 
\cite{Duskin-Mem}, we say that $P$ is a 
{\em $G$-torsor over $M$} {\em rel.\ }  $U\colon \cC\to \cD$ when 
$P\to M$ is equipped with a section $s\colon U(M)\to U(P)$.  This 
notion allows for a neat definition of the notion of principal twisted 
cartesian product in the category $s\cC$ of simplicial objects of $\cC$, where $\cC$ admits finite limits.  

\begin{definition}[Duskin \cite{Duskin-Mem}] 
Let $G$ be a group object in $s\cC$.  A {\em principal twisted cartesian product} in $s\cC$ with structure group $G$ 
is a $G$-torsor rel.\ $\Dec_0\colon s\cC\to s_a\cC$, where $s_a\cC$ denotes the category of augmented 
simplicial objects in $\cC$ and coherent maps.  
\end{definition} 

Thus a principal twisted cartesian product in $s\cC$ with 
structure group $G$ consists of a $G$-torsor $P\to M$ in $s\cC$ which 
is equipped with a {\em pseudo-cross section}, i.e.\ for 
every $n\geq 0$ there is a map $\sigma_n\colon M_n\to P_n$ such that 
$\sigma_n$ is a section of $P_n\to M_n$ for all $n\geq 0$, 
which satisfies $d_i\sigma_n = \sigma_{n-1}d_i$ for all $0<i\leq n$ 
and $s_i\sigma_{n-1} = \sigma_{n}s_i$ for all $0\leq i\leq n$, 
$n\geq 1$ (in other words this reduces to the classicial definition --- see Definition 18.5 of \cite{MaySOAT}).   
As an immediate application of Lemma~\ref{lem:torsors are local fibns} we have the following 
result. 
\begin{lemma} 
\label{lem:PTCPs are local fibns}
Suppose that $P\to M$ is a principal twisted cartesian product in $s\cC_{/B}$ with 
structure group $G$.  Then $\pi\colon P\to M$ is a local fibration in $s\cC_{/B}$.  
\end{lemma}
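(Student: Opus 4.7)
The plan is to reduce the statement immediately to Lemma~\ref{lem:torsors are local fibns}. By definition, a principal twisted cartesian product $P \to M$ with structure group $G$ is in particular a $G$-torsor in $s\cC/B$, and it is equipped with a pseudo-cross section, that is, a family of maps $\sigma_n\colon M_n \to P_n$ in $\cC/B$ with $\sigma_n$ a section of $\pi_n\colon P_n \to M_n$ for every $n\geq 0$. Since a morphism admitting a global section in $\cC/B$ is in particular a local epimorphism (take the trivial cover by the identity), this shows that $P \to M$ is a locally trivial $G$-torsor in the sense of the previous subsection.

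Having verified the hypothesis, the conclusion follows at once by applying Lemma~\ref{lem:torsors are local fibns}, which asserts that any locally trivial $G$-torsor in $s\cC/B$ is a local fibration. The only point to observe is that there is no subtlety hidden in the reduction: global sections pull back to global sections under any base change, so the verification of local triviality is immediate from the existence of the pseudo-cross section and does not require the compatibility conditions $d_i\sigma_n = \sigma_{n-1}d_i$ and $s_i\sigma_{n-1} = \sigma_n s_i$. In fact, under these hypotheses the second part of Lemma~\ref{lem:torsors are local fibns} applies and gives the stronger statement that each horn filling map
\[
P_n \to M_{\Lambda^n_k}P \times_{M_{\Lambda^n_k}M} M_n
\]
admits a global section in $\cC/B$, which is more than needed. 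There is no real obstacle in this argument; it is essentially a matter of unwinding the definitions and citing the preceding lemma.
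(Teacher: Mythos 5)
Your argument is correct and is exactly what the paper intends: it records this lemma as an immediate application of Lemma~\ref{lem:torsors are local fibns}, the point being precisely that the pseudo-cross section gives global sections of each $P_n\to M_n$, hence local triviality (and, as you note, even global sections of the horn-filling maps). Nothing further is needed.
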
 
 
A prime example of a twisted cartesian product is the 
universal $G$ torsor $WG\to \overline{W}G$.  Recall (see \cite{RS}) that if $G$ is a group in 
$s\cC$, then there is a canonical $G$-torsor 
\[
G\dslash G \to 1\dslash G
\]
in $s\Gpd(\cC)$, where $\Gpd(\cC)$ denotes the category of groupoids 
in $\cC$.  Here $G\dslash G$ denotes the action 
groupoid associated to the action of the group $G$ in 
$\Gpd(\cC)$ on itself by 
right multiplication, likewise $1\dslash G$ denotes the action 
groupoid of the trivial action of the group $G$ on the 
terminal groupoid 1 in $\Gpd(\cC)$.  
Applying the nerve functor $N$ gives a canonical 
$G$-torsor 
\[
N(G\dslash G)\to NG 
\]
in $ss\cC$, where $G$ is thought of as a constant simplicial group in $s\cC$, and 
where $NG = N(1\dslash G)$ denotes the nerve of the usual groupoid associated to $G$.  
Applying the total simplicial object functor $\sigma_*\colon ss\cC\to s\cC$ right adjoint to 
$\Dec\colon s\cC\to ss\cC$ (see \cite{CR,Ste1}) we get 
a $G$-torsor 
\[
\sigma_*N(G\dslash G)\to \sigma_*NG 
\]
in $s\cC$.  This $G$-torsor is denoted $WG\to \Wbar G$.  It can be shown (see \cite{RS}) 
that $WG = \Dec_0\Wbar G$ and that when $\cC = \Set$, we recover the usual 
description of the classifying complex $\Wbar G$ of a simplicial group $G$.  Thus in this 
case $\Wbar G$ is the simplicial set with exactly one 0-simplex and whose set of 
$n$-simplices, $n\geq 1$, is the set 
\[
(\Wbar G)_n = G_{n-1}\times \cdots \times G_0 
\]
with face and degeneracy maps defined by the formulas 
\[
d_i(g_{n-1},\ldots, g_0) = \begin{cases}
		(g_{n-2},\ldots, g_0) & \text{if}\ i=0, \\
		(d_i(g_{n-1}),\ldots, d_1(g_{n-i+1}),g_{n-i-1}d_0(g_{n-i}),g_{n-i-2},\ldots, g_0) & \text{if}\ 1\leq i\leq n
		\end{cases}
\]
and 
\[
s_i(g_{n-1},\ldots, g_0) = \begin{cases} 
			(1,g_{n-1},\ldots, g_0) & \text{if}\ i=0, \\ 
			(s_{i-1}(g_{n-1}),\ldots, s_0(g_{n-i}),1,g_{n-i-1},\ldots, g_0) & \text{if}\ 1\leq i\leq n.  
			\end{cases}
\]
respectively.  When $G$ is a group in $s\cC$ whose underlying simplicial object 
is constant, one can show that $\Wbar G$ reduces to the usual classifying space 
$BG$ of $G$, i.e.\ the simplicial object in $\cC$ whose object of $n$-simplices 
is the product $G\times \cdots \times G$ ($n$ factors) and whose face and degeneracy 
maps are defined using the product in $G$ and insertion of identities.

For the case of simplicial groups it is well known that 
$\overline{W}G$ is a fibrant simplicial set --- see for instance Lemma 21.3 of \cite{MaySOAT}.  
In the next lemma we prove a more general statement.  

\begin{lemma} 
Let $G$ be group in $s\cC_{/B}$.  Then $\overline{W}G$ is a locally fibrant simplicial object in $\cC_{/B}$.  
\end{lemma}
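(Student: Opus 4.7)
The plan is to adapt the argument of Lemma~\ref{lem:grp} to the non-group object $\overline{W}G$ by exploiting the fact that the $\overline{W}$ construction is purely categorical. First I would observe that for every object $A$ of $\cC/B$, the representable functor $(\cC/B)(A,-)$ preserves limits. Since $\overline{W}G$ is built from $G_0,G_1,\dots$ using only fiber products over $B$ and the multiplication of $G$ (as encoded in the explicit formulas for face and degeneracy maps recalled just before the lemma), this gives a natural isomorphism of simplicial sets
\[
(\cC/B)(A,\overline{W}G)\;\cong\;\overline{W}\bigl((\cC/B)(A,G)\bigr),
\]
where $(\cC/B)(A,G)$ is an ordinary simplicial group in $\Set$.

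Next, I would invoke the classical fact that for any simplicial group $H$ in $\Set$ the classifying complex $\overline{W}H$ is a Kan complex (for instance Lemma~21.3 of \cite{MaySOAT}). Applied to $H = (\cC/B)(A,G)$ this shows that the canonical horn-filling map
\[
(\cC/B)(A,\overline{W}G)_n \;\longrightarrow\; M_{\Lambda^n_k}(\cC/B)(A,\overline{W}G)
\]
is surjective for every $n\geq 1$ and $0\leq k\leq n$. Since $(\cC/B)(A,-)$ commutes with the matching object construction, the right-hand side is naturally identified with $(\cC/B)(A,M^B_{\Lambda^n_k}\overline{W}G)$, so the surjection takes the form
\[
(\cC/B)\bigl(A,(\overline{W}G)_n\bigr) \;\twoheadrightarrow\; (\cC/B)\bigl(A,M^B_{\Lambda^n_k}\overline{W}G\bigr).
\]

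Finally, taking $A = M^B_{\Lambda^n_k}\overline{W}G$ and choosing any preimage of the identity yields a global section in $\cC/B$ of the canonical map $(\overline{W}G)_n \to M^B_{\Lambda^n_k}\overline{W}G$. Any map admitting a global section is a local epimorphism (use the trivial cover by the identity), so $\overline{W}G$ is locally fibrant. The only point demanding care is the first step, namely verifying that the face and degeneracy operators of $\overline{W}$ on a simplicial group in $\cC/B$ match those on $(\cC/B)(A,G)$ after applying the representable functor; this is straightforward because the $\overline{W}$ construction is formulated entirely in terms of finite products and the group multiplication, both of which are preserved by $(\cC/B)(A,-)$.
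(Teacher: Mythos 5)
Your proof is correct, but it takes a genuinely different route from the paper. You reduce the general statement to the classical fact that $\Wbar H$ is a Kan complex for an ordinary simplicial group $H$ (May, Lemma 21.3), by the same representability trick the paper uses in Lemma~\ref{lem:grp}: since $\Wbar$ is built from finite limits in $\cC/B$ and the group structure of $G$, the functor $(\cC/B)(A,-)$ carries $\Wbar G$ to $\Wbar\bigl((\cC/B)(A,G)\bigr)$ and commutes with matching objects, so taking $A = M^B_{\Lambda^n_k}\Wbar G$ and lifting the identity produces a global section of each horn map; this is a sound argument, and it even yields the stronger conclusion that the maps $(\Wbar G)_n\to M_{\Lambda^n_k}\Wbar G$ split globally, not merely that they are local epimorphisms. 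The paper instead argues internally: for $k<n$ it decomposes $\Lambda^n_k$ as $C\Lambda^{n-1}_{k}\cup_{\Lambda^{n-1}_{k}}\Delta^{n-1}$, uses $WG=\Dec_0\Wbar G$ together with Corollary~\ref{cor:cones and matching objects} to identify the horn-filling map for $\Wbar G$ with the relative horn-filling map of the torsor $WG\to\Wbar G$, invokes Lemma~\ref{lem:torsors are local fibns}, and handles $k=n$ by passing to the opposite simplicial group. What the paper's route buys is independence from the external simplicial-set citation and, more importantly, a workout of the d\'ecalage/cone/join machinery and the torsor lemmas that are reused in the proof of the main theorem, along with a conceptual identification of the horn fillers in terms of the universal torsor; what your route buys is brevity and a sharper output (global sections), at the cost of resting on the classical Kan-ness of $\Wbar$ rather than reproving it internally.
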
 

One way to prove this is to use a Yoneda argument, as in the proof of Lemma~\ref{lem:grp}.  
We give here a different proof, which recasts the argument from Lemma 21.3 of \cite{MaySOAT} 
in a modern framework.  

\begin{proof} 
We need to prove that $(\W G)_n\to M_{\Lambda^n_k}\W G$ is a local epimorphism for all $0\leq k\leq n$ and all $n\geq 1$.  Observe that if 
$k<n$ then 
\[
\Lambda^n_k = C\Lambda^{n-1}_{k}\cup_{\Lambda^{n-1}_{k}} \Delta^{n-1}  
\]
(see for instance Lemma 3.11 of \cite{Joyal}). 
Using this description of $\Lambda^n_k$ and Corollary~\ref{cor:cones and matching objects}, we have   
\begin{align*} 
M_{\Lambda^n_k}\W G & = M_{C\Lambda^{n-1}_{k}\cup_{\Lambda^{n-1}_{k}} \Delta^{n-1}} \W G \\ 
& = M_{C\Lambda^{n-1}_{k}} \W G\times_{M_{\Lambda^{n-1}_{k}}\W G} (\W G)_{n-1} \\ 
& = M_{\Lambda^{n-1}_{k}} W G\times_{M_{\Lambda^{n-1}_{k}}\W G} (\W G)_{n-1}.  
\end{align*}
Therefore the map $(\W G)_n\to M_{\Lambda^n_k}\W G$ is isomorphic to the map 
\[
(W G)_{n-1} \to M_{\Lambda^{n-1}_{k}} W G\times_{M_{\Lambda^{n-1}_{k}}\W G} (\W G)_{n-1}
\]  
and so in this case the result follows from Lemma~\ref{lem:torsors are local fibns}.  
For the case where $k=n$, note that we have 
\[
M_{\Lambda^n_n}\W G = M_{\Lambda^n_0} \W G^o,   
\]
where $G^o$ denotes the opposite simplicial object to $G$, so that $G^o$ is the 
composite of the functor $G\colon \Delta^\op\to \cC$ and the functor $\tau^\op$, 
where $\tau\colon \Delta\to \Delta$ is the automorphism of $\Delta$ which reverses 
the order of each ordinal.  
Therefore the map $(\W G)_n \to M_{\Lambda^n_n}\W G$ is isomorphic to the map 
$(\W G^o)_n \to M_{\Lambda^n_0} \W G^o$ which we have just seen is a local epimorphism.  
\end{proof}

%
%

\subsection{Non abelian cohomology} 
\label{non abelian cohomology}
 
As mentioned in the introduction, the \v{C}ech 
cohomology $\check{H}^1(B,G)$ 
of a topological space $B$ with coefficients 
in a presheaf of groups $G$ on $B$ is 
traditionally thought of in terms of cocycles 
$g_{ij}\in G(U_i\cap 
U_j)$ 
relative to some open cover $U = (U_i)_{i\in I}$ of $B$.  
It is often more convenient however to package 
the data of such a cocycle $g_{ij}$ into a simplicial map 
\[ 
\check{C}(U)\to \Wbar G 
\] 
from the \v{C}ech resolution $\check{C}(U)$ of 
the open cover $U$ to the classifying simplicial 
presheaf $\Wbar G$.
The \v{C}ech resolution $\check{C}(U)$ is the 
simplicial presheaf on $B$ defined as follows.  
Let $rU_i$ denote the presheaf on $\Open(B)$ represented by 
$U_i$, so that 
\[
rU_i(V) = \begin{cases} 
1 & \text{if}\ V\subset U_i, \\ 
\emptyset & \text{otherwise}
\end{cases} 
\] 
for $V$ an open subset of $B$.  
Let $U$ also denote the coproduct $U = \coprod_{i\in I}rU_i$.  
Then $\check{C}(U)$ is the simplicial presheaf whose presheaf of $n$-simplices is 
\[ 
[n]\mapsto U\times U\times \cdots \times U\ (n+1\ \text{factors}).  
\] 
Thus the presheaf of 2-simplices of $\check{C}(U)$ 
is the coproduct $\coprod_{i,j\in I}r(U_i\cap U_j)$ 
and so on.  It is not hard to see that a morphism of 
simplicial presheaves $\check{C}(U)\to \Wbar G$ 
corresponds to the data 
of a family of sections $g_{ij}\in G(U_i\cap U_j)$ 
satisfying the cocycle equation $g_{ij}g_{jk} = g_{ik}$.  
We then have the 
following description of the \v{C}ech cohomology set 
$\check{H}^1(B,G)$ as the colimit 
\[
\check{H}^1(B,G) =  \varinjlim_{U\in \Cov(B)} \pi(\check{C}(U),\Wbar G)
\] 
of the sets $\pi(\check{C}(U),\Wbar G)$ of simplicial homotopy classes of maps 
over the set $\Cov(B)$ of all covers $U$ of $B$.  
The 
canonical map $\check{C}(U)\to 1$ to the terminal simplicial 
sheaf  on $B$ is a hypercover and so we have a canonical map 
\[
\check{H}^1(B,G) \to \varinjlim_{V\in \pi HR(B)} \pi(V,\Wbar G).  
\]
It turns out that this map is an isomorphism.  
This fact was first observed in \cite{Jardine3} and (at this level of generality) rests on the 
fact that for any hypercover $V$, there is a bijection 
$\pi(V,\Wbar G) = \pi(\cosk_0 V,\Wbar G)$.  To prove this it is 
sufficient to check that the canonical map 
$V\to \cosk_0V$ is an isomorphism on fundamental groupoids, and by 
passing to stalks it is sufficient to check this for a contractible Kan complex, which is straightforward.  
In \cite{Jardine3} Jardine observes that one obtains an identification 
\[ 
\check{H}^1(B,G) \cong [1,\Wbar G] 
\] 
on applying the generalized Verdier hypercovering 
theorem (Theorem~\ref{verdier hypercovering theorem}) --- in fact 
Jardine proves that this identification holds more generally for arbitrary Grothendieck 
topoi.  
In other words, \v{C}ech cohomology and so-called 
hyper-\v{C}ech cohomology coincide in this case.  
This identification opens the way to define higher 
order generalizations of the \v{C}ech cohomology $\check{H}^1(B,G)$.  
The following definition can be found in \cite{Breen1} for the 
case when $G$ is 2-truncated (see also \cite{Noohi} for a nice discussion 
in the context of group cohomology).  

\begin{definition}[\cite{Breen1}] 
Let $B$ be a topological space and let $G$ be a 
presheaf of simplicial groups on $B$.  Then we define 
\[ 
H(B,G) := [1,\overline{W}G] 
\] 
where the square brackets $[\, ,\, ]$ denotes the set of 
morphisms in the homotopy category $\Ho(s\Pre(B)_\mathcal{L})$.  
\end{definition} 

We will call $H(B,G)$ the {\em hyper-\v{C}ech cohomology} of $B$ with coefficients in $G$.  
Elements of $H(B,G)$ have the following explicit description using Jardine's notion 
of a {\em cocycle category} \cite{Jardine2,Jardine4}.  A {\em $G$-cocycle} on 
$B$ consists of a locally fibrant, locally acyclic simplicial object $V$ in $\cK_{/B}$, together 
with a simplicial map $V\to \Wbar G$.  A morphism of $G$-cocycles from $p\colon V\to \Wbar G$ 
to $q\colon W\to \Wbar G$ consists of a map $f\colon V\to W$ in $\cK_{/B}$ such that 
$q f = p$.  There is an obvious notion of composition of morphisms of $G$-cocycles so 
that $G$-cocycles form the objects of a category $\bH(B,\Wbar G)$.  It can be shown 
(using Theorem 3 of \cite{Jardine2}) that there is an isomorphism $\pi_0 \bH(B,\Wbar G) = H(B,G)$.       
   
When $G$ is the presheaf of constant simplicial groups associated to a 
presheaf of groups on $B$, the cohomology set $H(B,G)$ reduces to the familiar 
\v{C}ech cohomology set $\check{H}^1(B,G)$.  This set $\check{H}^1(B,G)$ is of course a shadow of 
a richer structure, in the sense that $\check{H}^1(B,G)$ is the set of path components of the groupoid of $G$-torsors on $B$.  
Similarly, $H(B,G)$ is also a shadow of a richer structure, in this case $H(B,G)$ is the set 
of path components of the $\infty$-groupoid of $G$-torsors on $B$.  This $\infty$-groupoid 
can be explicitly described as the Kan complex $\hom(1,R\Wbar G)$, where $R\Wbar G$ is a 
fibrant replacement (i.e.\ (hyper-)$\infty$-stackification) for $\Wbar G$, and where 
$\hom(-,-)$ denotes the simplicial enrichment for $s\Pre(B)$.        

One can define \v{C}ech versions $\check{H}(B,G)$ of the sets $H(B,G)$ by setting 
\[ 
\check{H}(B,G) = \varinjlim_{U\in \Cov(B)} \pi(\check{C}(U),\W G).  
\] 
As above we still have a canonical map 
\[ 
\check{H}(B,G)\to H(B,G)  
\] 
which is essentially the comparison map between \v{C}ech 
and sheaf cohomology.  It is well known that this map need not be an isomorphism.  
However in the special case when $B$ is paracompact and $G$ is $k$-truncated for some $k\geq 0$, in the sense 
that the sheaves of homotopy groups $\pi_i(G)$ are trivial for $i\geq k$, then it can be shown (using 
Lemma 7.2.3.5 of \cite{Lurie} for instance) that this comparison map is an isomorphism.  

An interesting case of this theory is the following.  
Suppose that $G$ is a presheaf of groups on $B$.  
Then $G$ determines a group object $\AUT(G)$ in the 
category of presheaves of groupoids on $B$; briefly 
$\AUT(G)$ is the action groupoid in the category of group objects 
in $\Pre(B)$ associated to the homomorphism of 
groups $G\to \Aut(G)$.  
Thus $\AUT(G)$ is an example of a {\em 2-group} or {\em categorical group} \cite{Baez-Lauda}.  
Applying the nerve construction $N$ to the 
presheaf of groupoids $\AUT(G)$ yields a group in $s\Pre(M)$ 
that we will also denote by $\AUT(G)$.  
We will write $H^1(B,\AUT(G))$ for the corresponding cohomology set.  

The set $H^1(B,\AUT(G))$ parametrizes the set of equivalence classes of {\em $G$-gerbes} on $B$.  
A $G$-gerbe on $B$ is a generalization of the notion of $G$-torsor on $B$.  For more details we refer to 
\cite{Breen2}.  In this case the group $\AUT(G)$ is 2-truncated so that if $B$ is paracompact 
we have an isomorphism $\check{H}^1(B,\AUT(G)) = H^1(B,\AUT(G))$.  

When $G$ is the presheaf of groups on $B$ represented a group $G$ in $\cK$, there is a slight variant of this 
construction in which the sheaf of groups $\Aut(G)$ is replaced by the sheaf represented by the 
group $\Aut(G)$ in $\cK$.  In this case one obtains a group $\AUT_0(G)$ in $s\Sh(B)$, represented 
by a group in $s\cK_{/B}$, and one defines as above the set 
$H^1(B,\AUT_0(G))$.  Note that there is a canonical homomorphism $\AUT_0(G)\to \AUT(G)$ but 
the induced map $H^1(B,\AUT_0(G))\to H^1(B,\AUT(G))$ need not be an isomorphism.  

\section{Proof of the main result} 
\label{sec:proof of main thm} 

Let us recall the statement of the main theorem of this paper.  
\setcounter{section}{1}
\setcounter{theorem}{0}
\begin{theorem}
Let $B$ be a paracompact, Hausdorff space and let $G$ 
be a fibrant simplicial parametrized group in $\cK_{/B}$.  
If $G_n$ is well sectioned for all $n\geq 0$ then there is an isomorphism 
of sets 
\[ 
H(B,G) = \check{H}^1(B,|G|).  
\]
\end{theorem} 
\setcounter{section}{5}
\setcounter{theorem}{0}

In this section we will give a proof of this theorem.  We begin by describing a 
universal $G$-cocycle.  

\subsection{The universal $G$-cocycle} 
\label{sec:univ cocycle}
Let $G$ be a simplicial group in $\cK_{/B}$.  If we apply Illusie's total d\'{e}calage functor~\eqref{total dec} 
to $G$ we obtain a biaugmented bisimplicial group $\Dec(G)$ in $\cK_{/B}$.  The 
bisimplicial group $\Dec(G)$ plays a prominent role in Porter's work \cite{Porter} 
on homotopy $n$-types.    
We will think of $\Dec(G)$ as the simplicial simplicial group in $\cK_{/B}$ whose 
simplicial group of $p$-simplices in $s\cK_{/B}$ is $\Dec_{p}(G)$ (i.e.\  the $p$-th row of $\Dec(G)$).  
It is helpful to keep the following picture in mind: 
\[ 
\xy 
(0,0)*+{G_0}="1"; 
(20,0)*+{\Dec_0G}="2"; 
(20,-15)*+{G}="3"; 
(0,15)*+{G_1}="4"; 
(20,15)*+{\Dec_1 G}="5"; 
(0,30)*+{G_2}="6"; 
(20,30)*+{\Dec_2G}="7"; 
(0,38)*+{\vdots}; 
(20,38)*+{\vdots};
{\ar_-{d_0} "2";"1"};
{\ar_-{d_0} "5";"4"}; 
{\ar_-{d_0} "7";"6"};
{\ar "2";"3"}; 
{\ar@<-1ex> "4";"1"}; 
{\ar "4";"1"}; 
{\ar@<-1ex> "1";"4"}; 
{\ar@<-1ex> "5";"2"}; 
{\ar "5";"2"}; 
{\ar@<-1ex> "2";"5"};
{\ar@<-2ex> "6";"4"}; 
{\ar@<-1ex> "6";"4"}; 
{\ar "6";"4"}; 
{\ar@<-1ex> "4";"6"}; 
{\ar@<-2ex> "4";"6"}; 
{\ar@<-2ex> "7";"5"}; 
{\ar@<-1ex> "7";"5"}; 
{\ar "7";"5"}; 
{\ar@<-1ex> "5";"7"}; 
{\ar@<-2ex> "5";"7"};
\endxy
\]
We can take fiberwise geometric realizations in the horizontal direction to     
get the following diagram of 
simplicial groups in $\cK_{/B}$ : 
\[
\xy 
(0,0)*+{G_0}="1"; 
(20,0)*+{|\Dec_0G|}="2"; 
(20,-15)*+{|G|}="3"; 
(0,15)*+{G_1}="4"; 
(20,15)*+{|\Dec_1G|}="5"; 
(0,30)*+{G_2}="6"; 
(20,30)*+{|\Dec_2G|}="7"; 
(0,38)*+{\vdots}; 
(20,38)*+{\vdots};
{\ar_-{d_0} "2";"1"};
{\ar_-{d_0} "5";"4"}; 
{\ar_-{d_0} "7";"6"};
{\ar^-{|d_\last|} "2";"3"}; 
{\ar@<-1ex> "4";"1"}; 
{\ar "4";"1"}; 
{\ar@<-1ex> "1";"4"}; 
{\ar@<-1ex> "5";"2"}; 
{\ar "5";"2"}; 
{\ar@<-1ex> "2";"5"};
{\ar@<-2ex> "6";"4"}; 
{\ar@<-1ex> "6";"4"}; 
{\ar "6";"4"}; 
{\ar@<-1ex> "4";"6"}; 
{\ar@<-2ex> "4";"6"}; 
{\ar@<-2ex> "7";"5"}; 
{\ar@<-1ex> "7";"5"}; 
{\ar "7";"5"}; 
{\ar@<-1ex> "5";"7"}; 
{\ar@<-2ex> "5";"7"};
\endxy
\]
We can apply the functor $\overline{W}$ to this diagram and obtain the following diagram of simplicial spaces  
\begin{equation} 
\label{double fibration}
\xy 
(0,7.5)*+{\overline{W}G}="1"; 
(20,7.5)*+{\overline{W}|\Dec(G)|}="2"; 
(20,-7.5)*+{\overline{W}|G|}="3"; 
{\ar "2";"1"}; 
{\ar "2";"3"};
\endxy
\end{equation} 
We can now explain the main idea of our proof of Theorem~\ref{main result}.  The diagram~\eqref{double fibration} 
induces a diagram 
\[
\xymatrix{ 
H(B,G) & H(B,|\Dec\, G|) \ar[l] \ar[d] \\ 
& H(B,|G|) } 
\]
so that we can compare $H(B,G)$ and $H(B,|G|)$ via $H(B,|\Dec\, G|)$.  
We will prove that each map in this diagram is an isomorphism using the hypotheses on $G$ and $B$.  
We remark that it is easy to check that if $G$ is an abelian group object in $s\cK_{/B}$ then the maps 
in the diagram above are homomorphisms of abelian groups.  
  
To prove that the maps in the above diagram are isomorphisms 
we will prove the following two propositions.  After we had completed this paper, 
we discovered that a result related to, but less general than Proposition~\ref{hypercover} below 
had been proven earlier by Moerdijk --- see 3.7 of \cite{Moerdijk}.  
   
\begin{proposition} 
\label{hypercover}
Let $G$ be a well sectioned group in $s\cK_{/B}$.  Then the map 
\[
\overline{W}|\Dec\, G|\to \overline{W}|G|\]
is a locally acyclic local fibration in $s\cK_{/B}$.  
\end{proposition}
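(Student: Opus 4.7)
The plan is to reduce the proposition to a statement about the map of simplicial groups $\phi\colon |\Dec\, G|\to |G|$ in $\cK/B$ (where $|G|$ is regarded as a constant simplicial group), and then to transfer this statement through the $\overline{W}$ construction.

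I would first show that $\phi$ is itself a locally acyclic local fibration in $s\cK/B$ by applying Proposition~\ref{contractibility propn}. Condition (1) of that proposition, namely that $|\Dec_0 G|\to |G|$ is a local epimorphism, is immediate: the degeneracies $s_n\colon G_n\to G_{n+1}$ assemble into a simplicial map $G\to \Dec_0 G$ that is a section of the counit $d_\last$ (since $d_{n+1}s_n=\mathrm{id}$), so after fiberwise realization one obtains a continuous section $|G|\to |\Dec_0 G|$ of $\phi_0$. Conditions (2) and (3) exploit the bar-construction structure of $\Dec\, G$: the comonad $\Dec_0$ on $s\cK/B$ equips the simplicial object $p\mapsto \Dec_p G = \Dec_0^{p+1}G$ with extra operators $s_\last^{(p)}\colon \Dec_p G\to \Dec_{p+1}G$ in the d\'{e}calage direction, and these yield the required sections of the matching maps $|\Dec_1 G|\to |\Dec_0 G|\times_{|G|}|\Dec_0 G|$ and $|\Dec_n G|\to M_{\partial\Delta^n}|\Dec\, G|$ after fiberwise realization. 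The well-sectioned hypothesis on each $G_n$ ensures that the fiberwise realization behaves well and that the constructed sections are continuous in $\cK/B$.

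Having established that $\phi$ is a locally acyclic local fibration, the next step is to transfer this to $\overline{W}\phi$. Using the formula $(\overline{W}H)_n = H_{n-1}\times\cdots\times H_0$ together with the pushout decomposition of $\partial\Delta^n$ analogous to the identity $\Lambda^n_k = C\Lambda^{n-1}_k\cup_{\Lambda^{n-1}_k}\Delta^{n-1}$ used earlier in the paper to prove local fibrancy of $\overline{W}G$, one obtains a description of the matching object $M_{\partial\Delta^n}\overline{W}H$ in terms of matching objects and face data of $H$, via Corollary~\ref{cor:cones and matching objects}. With this description in hand, the local epimorphism condition on the matching maps for $\overline{W}\phi$ reduces to the local epimorphism conditions on the matching maps of $\phi$, which have already been verified.

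The main obstacle is the last step: since $\overline{W}|G|$ is not a constant simplicial object, Proposition~\ref{contractibility propn} cannot be applied directly to $\overline{W}\phi$, so the transfer through $\overline{W}$ must be carried out by an explicit matching-object analysis via the cone/join decompositions and the torsor $W(-)\to \overline{W}(-)$; this is where the bulk of the technical work lies.
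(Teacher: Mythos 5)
Your overall strategy---reduce to showing $|\Dec\, G|\to |G|$ is a locally acyclic local fibration via Proposition~\ref{contractibility propn} and then push through $\Wbar$---is the same as the paper's, but your verification of the three conditions contains a genuine error and a genuine gap. For condition (1): the degeneracies do \emph{not} assemble into a simplicial section of $d_\last\colon \Dec_0G\to G$. In degree $n$ the last face operator of $\Dec_0 G$ is $d_n\colon G_{n+1}\to G_n$, so compatibility would require $d_ns_n = s_{n-1}d_n$, whereas $d_ns_n=\mathrm{id}$; the levelwise splittings $s_n$ are not a simplicial map, and in general no global section exists. Indeed $d_\last\colon\Dec_0G\to G$ is the analogue of $WG\to\Wbar G$: for $B$ a point, $|\Dec_0 G|$ is homotopy equivalent to $G_0$ while $|G|$ can have arbitrary homotopy type, and a global section would trivialize the principal $|\ker(d_\last)|$-bundle, forcing $G_0\simeq |G|$. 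Only \emph{local} sections hold, and getting even those is where the hypotheses do real work: one uses that $d_\last$ is levelwise a trivial (hence numerable) principal $\ker(d_{n+1})$-bundle and applies the realization theorem for simplicial principal bundles (Theorem~\ref{main thm from RS}), which needs $G$ proper---this is exactly where ``well sectioned $\Rightarrow$ good $\Rightarrow$ proper'' (Proposition~\ref{good implies proper}) enters. In your proposal the well-sectioned hypothesis is only invoked decoratively.

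The same problem undermines your treatment of conditions (2) and (3): extra degeneracies in the d\'ecalage direction give levelwise maps that are not simplicial, so they do not induce anything after fiberwise realization, and you cannot extract sections of $|\Dec_1 G|\to|\Dec_0G|\times_{|G|}|\Dec_0G|$ or of the higher matching maps this way. For (2) one needs a genuine levelwise section built from the group structure, e.g.\ $(x,y)\mapsto s_n(xy^{-1})s_{n+1}(y)$, exhibiting a levelwise trivial torsor, and then again Theorem~\ref{main thm from RS} together with properness of $\Dec_0G\times_G\Dec_0G\cong\Dec_0G\times_B\ker(d_\last)$ to get local sections after realization. For (3) extra degeneracies are not used at all: the effective-epimorphism claim is reduced levelwise and identified, via Proposition~\ref{bisimplicial matching and Dec}, with $G_{n+m+1}\to M_{\Delta^m\star\partial\Delta^n}G$; since $\Delta^m\star\partial\Delta^n\subset\Delta^{n+m+1}$ is an acyclic cofibration, Lemma~\ref{henriques lemma} supplies local sections and hence a surjective quotient map. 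Finally, the step you defer as ``the bulk of the technical work''---transferring along $\Wbar$---requires no matching-object analysis of $\Wbar$ at all: it is handled by observing that $\Wbar$ preserves surjective local fibrations and local weak equivalences, which can be checked on stalks using the classical simplicial-group facts.
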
 

It follows that the induced map $\overline{W}|\Dec\, G|\to \overline{W}|G|$ between representable simplicial presheaves 
on $B$ is a locally acyclic local fibration in $s\Pre(B)$, and therefore in particular is a weak equivalence 
in the localized projective model structure on $s\Pre(B)$.  Therefore the map 
\[
H(B,|\Dec\, G|)\to H(B,|G|) 
\]
is an isomorphism.  Theorem~\ref{main result} then follows from the next proposition.    

\begin{proposition} 
\label{2nd prop in proof of main thm}
Suppose that $B$ is paracompact and that $G$ is a well sectioned group in $s\cK_{/B}$.  
Then the induced map 
\[
H(B,|\Dec\, G|)\to H(B,G)
\]
is an isomorphism.  
\end{proposition}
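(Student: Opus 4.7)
The plan is to show that the map of simplicial sheaves $\overline{W}|\Dec\, G|\to \overline{W}G$ on $B$, induced by the homomorphism $|\Dec\, G|\to G$ of simplicial groups in $\cK/B$, is a local weak equivalence in $s\Pre(B)_{\mathcal{L}}$. Since $\overline{W}$ of a group object in $s\cK/B$ is locally fibrant by the lemma on $\overline{W}G$, passing to $\Ho(s\Pre(B)_{\mathcal{L}})$ will then yield the desired bijection $H(B,|\Dec\, G|)=[1,\overline{W}|\Dec\, G|]\to [1,\overline{W}G]=H(B,G)$.

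The first step is to show that $|\Dec\, G|\to G$ is a levelwise $f$-equivalence in $\cK/B$. For each $p\geq 0$ the simplicial object $\Dec_p G$ in $s\cK/B$ is of the form $\Dec_0 H$ for a suitable simplicial object $H$ in $s\cK/B$ with $H_0 = G_p$, and therefore carries the canonical simplicial contraction $h\colon \Dec_0 H\otimes\Delta^1\to \Dec_0 H$ recorded in Section~\ref{sec:background}, exhibiting $\Dec_p G$ as a simplicial deformation retract of the constant simplicial object on $G_p$. The well-sectioned hypothesis on each $G_n$ ensures that every level of $\Dec_p G$ is well sectioned, making $\Dec_p G$ proper in the May--Sigurdsson sense; fiberwise geometric realization then preserves this simplicial homotopy, producing a fiberwise strong deformation retraction $|\Dec_p G|\to G_p$ in $\cK/B$.

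Given the levelwise $f$-equivalence $|\Dec\, G|\to G$ in $s\cK/B$, the same conclusion propagates to $\overline{W}|\Dec\, G|\to \overline{W}G$: since $(\overline{W}K)_n=K_{n-1}\times_B\cdots\times_B K_0$ is a finite fiberwise product of the $K_i$, and $f$-equivalences are closed under finite fiberwise products in $\cK/B$, the induced map is a levelwise $f$-equivalence in $s\cK/B$. To upgrade this to a local weak equivalence of the represented simplicial sheaves, I would argue stalkwise at each $b\in B$: using paracompactness of $B$, the fiberwise deformation retractions restrict to sufficiently small open neighbourhoods of $b$ and induce a compatible weak homotopy equivalence of stalk simplicial sets at every simplicial level. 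The resulting local weak equivalence in $s\Pre(B)_{\mathcal{L}}$ completes the proof.

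The main obstacle is the first step: verifying that the well-sectioned hypothesis is strong enough to guarantee that the fiberwise geometric realization of the simplicial contraction $h$ yields a genuine fiberwise strong deformation retraction. This depends on a careful compatibility between the well-sectioned cofibration condition of May--Sigurdsson and the fiberwise geometric realization functor, and ultimately on transferring to the parametrized setting the standard fact that geometric realization of a proper simplicial space preserves simplicial homotopy equivalences.
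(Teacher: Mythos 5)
Your reduction has a genuine gap at its central step: the inference from ``$|\Dec\, G|\to G$ (and hence $\Wbar |\Dec\, G|\to \Wbar G$) is a levelwise $f$-equivalence'' to ``the induced map of simplicial presheaves on $B$ is a local (stalkwise) weak equivalence'' is unjustified, and it is false in general. The simplicial presheaf associated to a simplicial space over $B$ records at each level only the \emph{set} of sections over an open $U$, and its stalk only the set of germs; a fiberwise homotopy equivalence at each level carries no information about these sets. Already the map of constant simplicial spaces $\mathbb{R}\to \ast$ over $B=\ast$ is a levelwise $f$-equivalence whose associated map of simplicial sheaves is not a local weak equivalence. One could hope to repair this if the levelwise contractions of $\Dec_p G$ onto $G_p$ assembled into a fiberwise simplicial homotopy equivalence of the simplicial objects $|\Dec\, G|\to G$ in $s\cK/B$, but the extra degeneracies used in each row are not compatible with the simplicial operators in the other direction, so no such assembled homotopy is available. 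A telling sanity check: if $\Wbar|\Dec\, G|\to\Wbar G$ were a stalkwise equivalence, the proposition would hold with no hypothesis on $B$, whereas paracompactness enters the paper's argument in an essential, global way.

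The paper's route is different and avoids any claim of local weak equivalence for this leg. Since each $\Dec_n G\to G_n$ is split (Lemma~\ref{summary of dec}), the homomorphism $|\Dec\, G|\to G$ is levelwise split surjective, so $\Wbar|\Dec\, G|\to\Wbar G$ is a local (indeed objectwise) fibration with fiber $\Wbar|\ker(p)|$, where $\ker(p)$ is well sectioned with fiberwise contractible realizations. One then uses the resulting exact sequence of pointed sets $[1,\Wbar|\ker(p)|]\to[1,\Wbar|\Dec\, G|]\to[1,\Wbar G]$, proves surjectivity of the second map via the projective-fibration argument and the Verdier hypercovering theorem (Lemma~\ref{surjectivity}, Lemma~\ref{lem:surjective}), and proves triviality of $[1,\Wbar K]$ for a well-sectioned fiberwise contractible simplicial group $K$ (Lemma~\ref{lem:contractible}) by feeding $K$ back through Proposition~\ref{hypercover} and identifying $[1,\Wbar|K|]$ with $\check{H}^1(B,|K|)$, which vanishes because every principal bundle over a paracompact base with fiberwise contractible structure group is trivial. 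This is precisely where paracompactness is used; your proposal would need to either supply a genuinely local argument (which the counterexample above suggests cannot exist at the level of generality claimed) or be restructured along these fibration-theoretic lines.
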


Observe that the diagram~\eqref{double fibration} gives rise to a $G$-cocycle on 
the classifying space $B|G|$ using the canonical map $\check{C}(E|G|)\to \Wbar |G|$.  
We call this $G$-cocycle the {\em universal $G$-cocycle} for the following reason: any 
$G$-cocycle over $B$ is equivalent to a $G$-cocycle obtained by pulling back the 
universal $G$-cocycle along a map $B\to B|G|$, unique up to homotopy.  To see this observe 
that the isomorphism $\check{H}^1(M,|G|)\cong H(M,G)$ is realized by the map 
which sends a $|G|$-cocycle on $B$ determined by a map $\check{C}(P)\to \Wbar |G|$, where $P\to B$ is a 
principal $G$-bundle in $\cK_{/B}$, to the map $X\to \Wbar G$, where $X$ is defined by the pullback diagram 
\[
\xymatrix{ 
X \ar[d] \ar[r] & \Wbar |\Dec\, G| \ar[d] \\ 
\check{C}(P) \ar[r] & \Wbar |G|, } 
\]
and $X\to \Wbar G$ is the canonical map induced by $\Wbar |\Dec\, G|\to \Wbar G$.  
Since the map $\check{C}(P)\to \Wbar |G|$ factors through $\check{C}(E|G|)$ via a classifying map 
$P\to E|G|$ for the bundle $P$, it follows 
that the cocycle $X\to \Wbar G$ is the pullback of the universal $G$-cocycle under the 
map $B\to B|G|$.  Theorem~\ref{main result} implies that any $G$-cocycle on $B$ is equivalent 
to a cocycle of this form.

\subsection{Proof of Proposition~\ref{hypercover}}
\label{Proof of first Proposition}
To prove Proposition~\ref{hypercover} it suffices to prove that $|\Dec\, G|\to |G|$ is a locally acyclic local fibration, since 
$\Wbar$ preserves surjective local fibrations and local weak equivalences (it suffices to check this on stalks, and it is 
well known that the analogous statements are 
true for simplicial sets).  Therefore, in view of Proposition~\ref{contractibility propn}, 
it suffices to prove three things: 
\begin{enumerate} 
\item the map $|\Dec_0G|\to |G|$ admits local sections, 

\item the map $|\Dec_1 G|\to |\Dec_0G|\times_{|G|}|\Dec_0G|$ admits local sections, 

\item the map $|\Dec_nG|\to M_{\partial\Delta^n}|\Dec\, G|$ is an effective epimorphism for all $n\geq 2$.  
\end{enumerate} 

The main technical tool that we will use in proving these results is the following theorem from \cite{RS}.  
\begin{theorem}[\cite{RS}] 
\label{main thm from RS}
Let $G$ be a fibrant parametrized simplicial group.  Suppose that $P\to M$ is a 
simpliciail parameterized principal $G$-bundle, so that 
$P_n\to M_n$ is a numerable parametrized principal $G_n$ bundle in $\cK_{/B}$ for all $n\geq 0$.  If $M$ is a 
proper simplicial object, then the induced map 
\[
|P|\to |M| 
\]
on fiberwise geometric realizations is a locally trivial parametrized principal $|G|$ bundle in $\cK_{/B}$.  
\end{theorem}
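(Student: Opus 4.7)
The plan is to verify, for the map $|P|\to |M|$ in $\cK/B$, the three requirements of a locally trivial principal $|G|$-bundle: that $|G|$ acts fiberwise on $|P|$ over $|M|$, that the resulting shear map $|P|\times_B|G|\to |P|\times_{|M|}|P|$ is a homeomorphism, and that the bundle is numerably locally trivial in $\cK/B$. The strategy rests on two ingredients: the preservation of fiberwise products and pullbacks under fiberwise realization for proper, well-sectioned simplicial spaces over $B$; and a partition-of-unity argument to assemble a global numerable trivialization of $|P|\to|M|$ from the levelwise numerable trivializations of $P_n\to M_n$.

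For the first two requirements I would show that fiberwise realization in $\cK/B$ commutes with the fiberwise products appearing in the action diagram $P\times_B G\to P$ and the shear diagram $P\times_B G\to P\times_M P$. Numerability of each $P_n\to M_n$ together with properness of $M$ provides the sectioning and $\bar{f}$-cofibrancy conditions on $P$, $M$, and $G$ needed to apply the standard compatibility of fiberwise realization with fiberwise products in the sense of May--Sigurdsson. Consequently the realized action $|P\times_B G|\to |P|$ is identified with a fiberwise action $|P|\times_B|G|\to |P|$, and the realized shear map is identified with a homeomorphism $|P|\times_B|G|\to |P|\times_{|M|}|P|$. This establishes the principal $|G|$-structure.

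The third requirement, numerable local triviality of $|P|\to |M|$, is the main obstacle. I would proceed inductively over the skeletal filtration of $|M|$. At each stage $n$, the hypotheses furnish a numerable cover of $M_n$ by trivializing open sets with a subordinate partition of unity for $P_n\to M_n$. Properness of $M$ makes the inclusion $|M|^{(n)}\hookrightarrow |M|^{(n+1)}$ a closed $\bar{f}$-cofibration in $\cK/B$ (via Theorem \ref{may sig model thm} and the fiberwise NDR characterization), producing collar neighborhoods along which local sections can be propagated using the fiberwise covering homotopy property of the principal bundles $P_n\to M_n$. To assemble a numerable cover of $|M|$, one weights the levelwise partitions by the barycentric coordinate functions on the standard simplices appearing in the coend defining $|M|$; this is the parametrized analogue of the classical Segal--Dold device for realizations of proper simplicial spaces.

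The delicate step throughout is the compatibility of trivializations under face and degeneracy identifications. Properness ensures that the identifications along degenerate subspaces are closed $\bar{f}$-cofibrations, so that trivializations chosen on $M_{n-1}$ extend equivariantly to a neighborhood of the degenerate part of $M_n$; the numerability hypothesis on each $P_n\to M_n$ is precisely what grants enough flexibility in the subordinate partitions of unity for these extensions and the final barycentric averaging to remain globally coherent as sections over $B$. The hardest part will be arranging the bookkeeping so that the local trivializations and the subordinate partitions are produced compatibly at every inductive stage, rather than having to repair mismatches after the fact.
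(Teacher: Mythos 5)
You should know at the outset that this paper contains no proof of Theorem~\ref{main thm from RS}: it is imported verbatim from \cite{RS} and used as a black box in Section~\ref{Proof of first Proposition}, so there is no internal argument to compare yours against. Judged on its own terms, the first stage of your plan is correct but over-provisioned: fiberwise geometric realization preserves finite limits in $\cK/B$ unconditionally (the paper itself appeals to this fact), so the induced action, the shear homeomorphism $|P|\times_B|G|\to|P|\times_{|M|}|P|$, and the identification of $|M|$ with the quotient need neither properness of $M$ nor numerability of the $P_n\to M_n$.

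The genuine gap is in the second stage, which is where essentially all the content of the theorem lives. What must be produced is a numerable open cover of $|M|$ together with honest sections of $|P|\to|M|$ over each member. Your skeletal induction, as written, yields at best sections over the closed subspaces $|M|^{(n)}$, or over saturated images of sets of the form $V\times\Delta^n$, which are not open in $|M|$; the passage from these to sections over genuine open subsets of the realization is precisely the hard point, not ``bookkeeping to be arranged''. Two specific steps are missing. First, when you ``propagate local sections using the fiberwise covering homotopy property of $P_n\to M_n$'' across a collar of the boundary/degenerate part of $M_n\times\Delta^n$, the extension must be $G_n$-equivariant in order to glue into a section of a principal bundle; this is not automatic from the CHP, and is exactly where the section-extension property of numerable principal bundles (Dold), applied to $P_n\to M_n$ pulled back along the fiberwise NDR deformation that properness supplies, has to be invoked. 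Second, the barycentric weighting produces candidate partition-of-unity functions on $|M|$, but you must still show that $|P|\to|M|$ admits a section over the support of each such function, which returns you to the first point. A smaller inaccuracy: the fact that $|M|^{(n)}\hookrightarrow|M|^{(n+1)}$ is a closed $\bar{f}$-cofibration follows from properness via the fiberwise NDR criterion (Lemma 5.2.4 of \cite{May-Sig}) together with a latching-object analysis, not from Theorem~\ref{may sig model thm}, which only records the model structure. In short, stage one is fine; stage two is a reasonable plan whose crux is correctly identified but not carried out, and the omitted equivariance argument is a mathematical issue rather than an organizational one.
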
 
Recall that a simplicial object $X$ in $s\cK_{/B}$ is said to be {\em proper} if the inclusion $L_nX\subset X_{n+1}$ 
is an $\bar{f}$-cofibration in $\cK_{/B}$ for all $n\geq 0$.  In other words $X$ is proper if the inclusion 
$(X_{n+1},sX_n)$ is a fiberwise NDR pair for all $n\geq 0$ where $sX_n = \cup^n_{i=0}s_iX_n$.

We need a recognition theorem to identify when a simplicial object in $\cK_{/B}$ is proper.  It is sometimes 
easier to recognize when a simplicial object $X$ in $\cK_{/B}$ is {\em good} in the sense that $s_i\colon X_n\to X_{n+1}$ is a 
$\bar{f}$-cofibration in $\cK_{/B}$ for all $0\leq i\leq n$ and all $n\geq 0$.  The following result 
from \cite{RS} (which is essentially folklore) provides a useful recognition criterion.  
\begin{proposition}[\cite{RS}] 
\label{good implies proper}
If $X$ is a good simplicial object in $\cK_{/B}$, then $X$ is proper.  If $G$ is a well sectioned group in $s\cK_{/B}$, 
then $G$ is good and hence proper.  
\end{proposition}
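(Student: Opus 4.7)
The plan is to prove the two assertions in sequence. For the first (good implies proper), I would argue by induction on $n\ge 0$ that for a good simplicial object $X$ in $\cK/B$ the latching inclusion $L_nX\hookrightarrow X_{n+1}$ is an $\bar{f}$-cofibration. Filter the latching object by the partial unions $L_n^{(k)}:=\bigcup_{i=0}^{k}s_iX_n$, so that $L_n^{(-1)}=\emptyset$ and $L_n^{(n)}=L_nX$. Using the cosimplicial identity $s_ks_i=s_is_{k-1}$ for $i<k$, the intersection $s_kX_n\cap L_n^{(k-1)}$ is identified with $s_k(L_{n-1}^{(k-1)})$, producing a pushout square
\[
\xymatrix{
s_k(L_{n-1}^{(k-1)}) \ar[r] \ar[d] & s_kX_n \ar[d] \\
L_n^{(k-1)} \ar[r] & L_n^{(k)}
}
\]
whose left vertical map is an $\bar{f}$-cofibration by the inductive hypothesis at dimension $n-1$. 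Stability of $\bar{f}$-cofibrations under pushout (an ingredient of the May--Sigurdsson model structure) then completes the induction step, and composing the filtration steps gives $L_nX\hookrightarrow X_{n+1}$.

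For the second assertion, suppose $G$ is a well sectioned group in $s\cK/B$; one must show each degeneracy $s_i\colon G_n\to G_{n+1}$ is an $\bar{f}$-cofibration. The group operations furnish an isomorphism of ex-spaces
\[
\mu\colon G_n\times_B\ker(d_i) \xrightarrow{\cong} G_{n+1}, \qquad (g,h)\mapsto s_i(g)\cdot h,
\]
with inverse $k\mapsto (d_ik,\, s_i(d_ik)^{-1}\cdot k)$. Under $\mu$, the map $s_i$ is identified with the inclusion $G_n\to G_n\times_B\ker(d_i)$, $g\mapsto(g,e)$. Moreover the formula $r(k):=s_i(d_ik)^{-1}\cdot k$ provides a continuous retraction of the closed inclusion $\ker(d_i)\hookrightarrow G_{n+1}$. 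I would then use this retraction to push a fiberwise NDR representation of $(G_{n+1},B)$, available by the well-sectioning hypothesis and Lemma~5.2.4 of \cite{May-Sig}, forward to a fiberwise NDR representation of $(\ker(d_i),B)$, thereby showing that $B\to\ker(d_i)$ is an $\bar{f}$-cofibration. Taking fiber products with $G_n$ over $B$ transports this NDR structure to $(G_n\times_B\ker(d_i),G_n)$, which via $\mu$ is exactly the pair $(G_{n+1},s_iG_n)$, and so $s_i$ is an $\bar{f}$-cofibration.

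The main obstacle I expect is the NDR-transport in the second step: given an NDR representation $(u,h)$ of $(G_{n+1},B)$ one wants to set $\tilde u:=u|_{\ker(d_i)}$ and $\tilde h_t:=r\circ h_t|_{\ker(d_i)\times_B I}$, and the subtle points are verifying that $\tilde h$ is a homotopy over $B$, that $\tilde h_0$ is the identity on $\ker(d_i)$, that $\tilde h_t$ fixes $B$, and that $\tilde h_1(x)\in B$ whenever $\tilde u(x)<1$. These conditions depend on $r$ being a retraction that fixes $B$ pointwise and sends $B$ into $B$, which is where the group-theoretic content enters in an essential way. Once that verification is in hand, the remaining ingredients of both claims reduce to routine manipulation with the pushout-closure and product-stability of $\bar{f}$-cofibrations inside the May--Sigurdsson framework.
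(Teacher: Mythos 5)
Your second assertion (well sectioned $\Rightarrow$ good) is essentially right: the isomorphism $\mu\colon G_n\times_B\ker(d_i)\cong G_{n+1}$ is correct, it identifies $s_i$ with $\mathrm{id}_{G_n}\times_B(B\to\ker(d_i))$, and the conclusion then follows from well-sectionedness of $\ker(d_i)$ together with stability of $\bar{f}$-cofibrations under fiber products over $B$ (a fact this paper quotes from \cite{RS}). But your NDR-transport is both riskier and unnecessary: Lemma 5.2.4 of \cite{May-Sig} characterizes \emph{closed} inclusions that are $\bar{f}$-cofibrations, and since we are in $\cK$ (not $\cU$) a well-sectioned group need not have a closed unit section, so you cannot simply ``take an NDR representation of $(G_{n+1},B)$'' as your starting point. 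The clean argument, and the one this paper itself uses for $\ker(d_0)$ and $\ker(d_\last)$, is that $\bar{f}$-cofibrations are defined by a left lifting property and hence are closed under retracts: $B\to\ker(d_i)$ is a retract, under and over $B$, of $B\to G_{n+1}$ via your map $r(k)=s_i(d_ik)^{-1}k$, so it is an $\bar{f}$-cofibration with no NDR bookkeeping and no closedness hypothesis. (Note the proposition is cited from \cite{RS}, so there is no proof in this paper to compare against line by line.)

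The first assertion (good $\Rightarrow$ proper) has a genuine gap. Granting your identification $s_kX_n\cap L_n^{(k-1)}=s_k(L_{n-1}^{(k-1)})$ and the pushout square, what cobase change gives you is that each filtration step $L_n^{(k-1)}\to L_n^{(k)}$ is an $\bar{f}$-cofibration --- and even for that, the map you must know to be a cofibration is the \emph{top horizontal} one, $s_k(L_{n-1}^{(k-1)})\to s_kX_n\cong(L_{n-1}^{(k-1)}\to X_n)$, which is a \emph{partial} latching inclusion and so requires strengthening your inductive hypothesis; the left vertical map is not an instance of the hypothesis at dimension $n-1$. More seriously, composing the filtration steps only yields that $\emptyset\to L_nX$ is an $\bar{f}$-cofibration, whereas properness demands that the inclusion $L_nX\hookrightarrow X_{n+1}$ be an $\bar{f}$-cofibration, and the latter does not follow from the former. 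What is actually needed at each stage is a union statement: knowing that $L_n^{(k-1)}\to X_{n+1}$, $s_kX_n\to X_{n+1}$ and their intersection $s_k(L_{n-1}^{(k-1)})\to X_{n+1}$ are $\bar{f}$-cofibrations, conclude that $L_n^{(k)}\to X_{n+1}$ is one. That is a fiberwise analogue of Lillig's union theorem (equivalently, an explicit fiberwise NDR-pair manipulation in the style of the classical ``good implies strictly proper'' arguments), and pushout-stability of cofibrations alone cannot produce it. This union step is the real content of the first half of the proposition and is missing from your sketch.
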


\begin{lemma}
\label{geometric realization of Dec G to G}
Let $G$ be a well sectioned, fibrant simplicial parametrized group.  Then the 
fiberwise geometric realization 
\[
|d_{\last}|\colon |\Dec_0G|\to |G| 
\]
is the projection in a 
numerable, locally trivial parametrized principal $|\ker(d_{\last})|$ bundle on 
$|G|$.  In particular $|d_{\last}|$ admits local sections.  
\end{lemma}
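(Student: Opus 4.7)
The plan is to exhibit $|d_{\last}|\colon |\Dec_0 G|\to |G|$ as the fiberwise realization of a simplicial principal bundle to which Theorem~\ref{main thm from RS} applies. As noted earlier in the paper (Section~\ref{sec:simplicial torsors}), any surjective homomorphism of simplicial groups makes the source a torsor over the target with structure group the kernel. So the first step is simply to recognize that $d_{\last}\colon \Dec_0 G\to G$ is a homomorphism of simplicial groups in $\cK/B$ and hence exhibits $\Dec_0 G$ as a $\ker(d_{\last})$-torsor over $G$ in $s\cK/B$.

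Next I would check that this torsor satisfies the hypotheses of Theorem~\ref{main thm from RS} level-wise. At each degree $n$ the map in question is $d_{n+1}\colon G_{n+1}\to G_n$, and this admits the degeneracy $s_n\colon G_n\to G_{n+1}$ as a global section (a map in $\cK/B$ since all simplicial structure maps of $G$ are morphisms over $B$). This section produces a global trivialization
\[
G_n\times_B \ker(d_{n+1})_n \xrightarrow{\ \cong\ } G_{n+1}, \qquad (g,k)\longmapsto s_n(g)\cdot k,
\]
so in particular $G_{n+1}\to G_n$ is a numerable principal $\ker(d_{n+1})_n$-bundle in $\cK/B$ for every $n$: numerability for a globally trivial bundle requires only a partition of unity subordinate to the singleton cover, which exists trivially.

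Then I would verify that $G$ is proper in the sense required by the theorem. This is where the hypothesis that $G$ is well sectioned enters: by Proposition~\ref{good implies proper}, a well sectioned simplicial group in $\cK/B$ is good, and every good simplicial object in $\cK/B$ is proper. With the level-wise numerability of the previous step and the properness of the base simplicial object $G$ in hand, Theorem~\ref{main thm from RS} applies and gives that
\[
|d_{\last}|\colon |\Dec_0 G|\longrightarrow |G|
\]
is a numerable locally trivial principal bundle in $\cK/B$ with structure group $|\ker(d_{\last})|$. The last sentence of the lemma is then immediate: a locally trivial principal bundle in $\cK/B$ admits local sections by definition of local triviality.

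I do not expect any step to present a serious obstacle. The only point that deserves care is confirming that the global trivializations at each level are morphisms in $\cK/B$ (not merely in $\cK$) and that they give numerable — rather than just locally trivial — principal bundles at each level, but both are immediate from the fact that the degeneracies $s_n$ are maps over $B$ and that a globally trivial bundle is trivially numerable. Once the hypotheses of Theorem~\ref{main thm from RS} are in place, the rest is formal.
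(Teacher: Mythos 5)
Your proposal is correct and follows essentially the same route as the paper's own proof: exhibit $d_{\last}\colon \Dec_0 G\to G$ as a $\ker(d_{\last})$-torsor, observe that each level $d_{n+1}\colon G_{n+1}\to G_n$ is split by a degeneracy and hence a trivial (so numerable) principal bundle, deduce properness of $G$ from well sectioned via Proposition~\ref{good implies proper}, and apply Theorem~\ref{main thm from RS}. No gaps.
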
 
\begin{proof}
The short exact sequence of groups $1\to \ker(d_\last)\to \Dec_0 G\to G\to 1$ in $s\cK_{/B}$ makes it clear 
that $d_\last\colon \Dec_0 G\to G$ has the structure of a simplicial parametrized 
principal $\ker(d_\last)$-bundle over $G$: the action of 
$\ker(d_\last)$ on $\Dec_0G$ is the obvious one and the diagram 
\[
\xymatrix{ 
\Dec_0 G\times \ker(d_\last) \ar[r] \ar[d] & \Dec_0 G \ar[d] \\ 
\Dec_0 G \ar[r] & G } 
\]
is clearly a pullback in $s\cK_{/B}$.  We need to check that (i) each $(\Dec_0G)_n\to G_n$ is a numerable $\ker(d_{n+1})$ bundle 
in $\cK_{/B}$, and that (ii) $\ker(d_{n+1})$ is a fibrant goup in $\cK_{/B}$.  The first statement is clear, since the map 
$d_{n+1}\colon (\Dec_0G)_n\to G_n$ has a section, and hence is a trivial bundle.  For the second statement, 
we observe that $\ker(d_{n+1})$ is a retract of $G_n$ in $\cK_{/B}$, and hence is fibrant since $G_n$ is.      
We now wish to apply Theorem~\ref{main thm from RS} to conclude that $|\Dec_0 G|\to |G|$ is a locally trivial 
principal bundle in $\cK_{/B}$, and so we need to know that $G$ is proper.  This follows from Proposition~\ref{good implies proper}: 
$G$ is well sectioned, hence good, and hence proper.    
\end{proof}

We now turn our attention to item 2.  We have the following lemma.  
\begin{lemma} 
Let $G$ be a well sectioned, fibrant group in $s\cK_{/B}$.  Then the fiberwise geometric realization 
\[
|\Dec_1 G|\to |\Dec_0G|\times_{|G|}|\Dec_0G| 
\]
is the projection in a locally trivial parametrized principal bundle in $\cK_{/B}$, and hence admits local sections.  
\end{lemma}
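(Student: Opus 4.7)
The strategy is to mimic the proof of Lemma~\ref{geometric realization of Dec G to G}: exhibit the map of simplicial group objects $\pi = (d_0,d_1)\colon \Dec_1 G \to \Dec_0 G\times_G \Dec_0 G$ as a simplicial principal bundle in $\cK/B$ whose degreewise projections are numerable, and whose base is a proper simplicial space, and then invoke Theorem~\ref{main thm from RS}. Since $\pi$ is a homomorphism between group objects in $s\cK/B$, it makes $\Dec_1 G$ into a simplicial principal bundle with structure group $H := \ker \pi$. Explicitly, in degree $n$ one has $H_n = \ker d_{n+1}\cap \ker d_{n+2}\subset G_{n+2}$; using the identification of the two face maps of $\Dec_1 G\to\Dec_0 G$ as $d_0 = \Dec_0(\epsilon_G)$ and $d_1 = \epsilon_{\Dec_0 G}$, one checks that $H$ is canonically isomorphic to $\ker(\epsilon_K\colon \Dec_0 K \to K)$, where $K = \ker(\epsilon_G)$, so the structural analysis reduces to the previous lemma applied to~$K$.

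For degreewise numerability, given $(a,b)\in G_{n+1}\times_{G_n} G_{n+1}$ (so $d_{n+1}(a)=d_{n+1}(b)$), the element
\[
x = s_{n+1}(a)\cdot s_n(a^{-1}b) \in G_{n+2}
\]
satisfies $d_{n+2}(x) = a$ and $d_{n+1}(x) = b$: this is a direct computation using the identities $d_{n+2}s_{n+1} = d_{n+1}s_{n+1} = \mathrm{id}$, $d_{n+2}s_n = s_n d_{n+1}$, and $d_{n+1}s_n = \mathrm{id}$. Since this formula is manifestly a morphism in $\cK/B$, it provides a pointwise section of $\pi$ in each degree, so each $\pi_n$ is a trivial (hence numerable) principal bundle in $\cK/B$.

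For properness of the base, since $G$ is well sectioned, Proposition~\ref{good implies proper} implies that $G$ and $\Dec_0 G$ are good, and the pullback $\Dec_0 G \times_G \Dec_0 G$ inherits goodness (and hence properness) from the standard stability of $\bar{f}$-cofibrations under fibre products in $\cK_B$. Note moreover that $K$ is itself well sectioned, since $K_n$ is a retract of $G_{n+1}$ in $\cK_B$ via the pointwise section $s_n$ of $\epsilon_G$ in that degree, and $\bar f$-cofibrations are closed under retracts.

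Theorem~\ref{main thm from RS} then yields that the fiberwise realization $|\pi|\colon |\Dec_1 G|\to |\Dec_0 G\times_G \Dec_0 G|$ is a locally trivial principal $|H|$-bundle in $\cK/B$. To conclude, we identify $|\Dec_0 G\times_G \Dec_0 G|$ with $|\Dec_0 G|\times_{|G|}|\Dec_0 G|$; this commutation of fiberwise realization with the pullback uses that $\Dec_0 G \to G$ is a numerable principal $K$-bundle in each degree by Lemma~\ref{geometric realization of Dec G to G}, so its self-pullback is degreewise a trivial bundle, and realization commutes with products of good simplicial spaces. The main obstacle is the verification of properness/goodness of the fibre product $\Dec_0 G\times_G \Dec_0 G$ and of the commutation of fiberwise realization with this pullback; the simplicial-identity verification for the pointwise section is routine.
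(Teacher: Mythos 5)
Your overall strategy is the paper's: give a degreewise section of $(\Dec_1G)_n\to(\Dec_0G)_n\times_{G_n}(\Dec_0G)_n$ (so each level is a trivial, hence numerable, principal bundle), check the base is a proper simplicial object, and apply Theorem~\ref{main thm from RS}. Your section $x=s_{n+1}(a)\cdot s_n(a^{-1}b)$ is correct (it is a mirror image of the paper's $\sigma(x,y)=s_n(xy^{-1})s_{n+1}(y)$), and your identification of $\ker\pi$ with $\ker(\epsilon_K)$ for $K=\ker(d_\last)$ is right, though not needed.

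The genuine gap is the properness of the base. You justify goodness of $\Dec_0G\times_G\Dec_0G$ by ``stability of $\bar f$-cofibrations under fibre products in $\cK_B$'', but no such stability holds for pullbacks over an arbitrary object: $\bar f$-cofibrations are not preserved by base change, and the fact available (from \cite{RS}, and the one the paper actually uses) is only that $A\times_B A'\to X\times_B X'$ is an $\bar f$-cofibration, i.e.\ stability under the \emph{product in $\cK/B$}, not under fibre products over $G$. As stated, this step would fail. The missing idea is the canonical isomorphism $\Dec_0G\times_G\Dec_0G\cong\Dec_0G\times_B\ker(d_\last)$, which holds because $d_\last$ is a homomorphism of group objects (this is exactly the torsor-triviality isomorphism~\eqref{pullback iso} of Section~\ref{sec:simplicial torsors}); after this reduction the base is a product over $B$ of the well-sectioned objects $\Dec_0G$ and $\ker(d_\last)$ (the latter well sectioned as a retract of $\Dec_0G$ -- a fact you already note in your aside about $K$), and the $\times_B$-stability result applies, giving goodness and hence properness by Proposition~\ref{good implies proper}. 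Separately, your bundle-theoretic detour to identify $|\Dec_0G\times_G\Dec_0G|$ with $|\Dec_0G|\times_{|G|}|\Dec_0G|$ is unnecessary: fiberwise geometric realization preserves finite limits, which is how the paper treats this point.
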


\begin{proof}
We need to verify the hypotheses from Theorem~\ref{main thm from RS}.  We first check that each 
map 
\begin{equation}
\label{2nd bundle projection}
(\Dec_1G)_n \to (\Dec_0G)_n\times_{G_n}(\Dec_0G)_n 
\end{equation}
is the projection in a numerable parametrized principal bundle.  To do this we will show that each of these maps has a section.  
The map~\eqref{2nd bundle projection} is the map 
\begin{gather*} 
G_{n+2}\to G_{n+1}\times_{G_n}G_{n+1} \\ 
g\mapsto (d_{n+1}(g),d_{n+2}(g)).  
\end{gather*}   
Suppose that $(x,y)\in G_{n+1}\times_{G_n}G_{n+1}$ so that  
$d_{n+1}(x) = d_{n+1}(y)$.  Then it is easy to check that 
\[
\sigma(x,y) = s_n(xy^{-1})s_{n+1}(y) 
\]
satisfies $d_{n+1}(\sigma(x,y)) = x$ and $d_{n+2}(\sigma(x,y)) = y$.  Hence~\eqref{2nd bundle projection} is the projection 
in a trivial fiberwise principal bundle.  Next we check that the structure group 
of this bundle is fibrant.  Thus we must check that $\ker(d_{n+1})\cap \ker(d_{n+2})$ is a fibrant 
object of $\cK_{/B}$.  As in the proof of Proposition~\ref{contractibility propn}, one may show that 
$\ker(d_{n+1})\cap \ker(d_{n+2})$ is a retract of $G_n$, and hence is fibrant.      

Finally, we need to check that $\Dec_0G\times_G \Dec_0G$ is a 
proper simplicial object.  Observe that there is an isomorphism 
$\Dec_0G\times_G \Dec_0G \cong \Dec_0G\times_B \ker(d_\last)$.  
It suffices to prove that $\Dec_0G\times_B \ker(d_\last)$ is well sectioned.  
Clearly $\Dec_0G$ is well sectioned; since $\ker(d_\last)$ is a 
retract of $\Dec_0 G$ we see that $\ker(d_\last)$ is also well sectioned.  
Therefore the result follows from the fact that if $A\to X$ and $A'\to X'$ are $\bar{f}$-cofibrations 
in $\cK_{/B}$, then $A\times_B A'\to X\times_B X'$ is also an $\bar{f}$-cofibration (see 
\cite{RS}, Lemma 24).  
\end{proof} 

\begin{lemma} 
The map $|\Dec_n G|\to M_{\partial\Delta^n}|\Dec\, G|$ is an effective epimorphism for all $n\geq 2$.  
\end{lemma}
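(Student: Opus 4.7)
The plan is to mirror the strategy of the previous two lemmas: establish the statement levelwise in the inner simplicial direction using the group structure of $G$, and then pass to fiberwise geometric realization via Theorem~\ref{main thm from RS}. Since $n\geq 2$ the simplicial set $\partial\Delta^n$ is connected, so Proposition~\ref{bisimplicial matching and Dec} identifies the simplicial object $M_{\partial\Delta^n}(\Dec\, G)$ in $s\cK/B$ at level $q$ with $M_{\partial\Delta^n\star\Delta^q}G$, where the join $\partial\Delta^n\star\Delta^q\subset\Delta^{n+q+1}$ is the union of the top faces $d^0\Delta^{n+q},\ldots,d^n\Delta^{n+q}$. Under this identification the map in question is, at simplicial level $q$, the partial matching map $G_{n+q+1}\to M_{\partial\Delta^n\star\Delta^q}G$ sending $g\mapsto(d_0g,\ldots,d_ng)$.

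First I would construct, for every $q\geq 0$, an explicit global section of this level-$q$ map in $\cK/B$, exhibiting it as a trivial principal bundle with fiber $K_{n,q}=\bigcap_{i=0}^n\ker d_i\subset G_{n+q+1}$. The section is the classical Moore-style iterative horn filling for simplicial groups: given a compatible tuple $(g_0,\ldots,g_n)$, set $g^{(0)}=1$ and $g^{(i+1)}=g^{(i)}\cdot s_i\bigl((d_ig^{(i)})^{-1}g_i\bigr)$; the compatibilities $d_ig_j=d_{j-1}g_i$ for $i<j\leq n$ together with the simplicial identities imply that each correction $s_i(\cdots)$ preserves $d_0,\ldots,d_{i-1}$, while $d_is_i=\mathrm{id}$ produces the required new face, so $g=g^{(n+1)}$ satisfies $d_jg=g_j$ for $0\leq j\leq n$. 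All operations are continuous in $\cK/B$, and the well-sectioned hypothesis on each $G_m$ ensures the trivialization is numerable.

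Second I would verify that the simplicial object $[q]\mapsto M_{\partial\Delta^n\star\Delta^q}G$ in $\cK/B$ is proper. Its terms are finite limits (iterated fiber products over $B$) of copies of the various $G_m$, and since fiber products over $B$ of $\bar{f}$-cofibrations are $\bar{f}$-cofibrations by~\cite{RS}, the well-sectionedness of each $G_m$ should propagate through the matching constructions to show that every degeneracy in the $q$-direction is an $\bar{f}$-cofibration. Proposition~\ref{good implies proper} then yields properness; in practice this step will likely require a convenient identification of $M_{\partial\Delta^n\star\Delta^q}G$ as a product of simpler pieces, in the same spirit as the identification $\Dec_0G\times_G\Dec_0G\cong\Dec_0G\times_B\ker(d_\last)$ used in the previous lemma.

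With the two ingredients in place, Theorem~\ref{main thm from RS} produces a locally trivial principal $|K_n|$-bundle $|\Dec_nG|\to|M_{\partial\Delta^n}(\Dec\, G)|$ in $\cK/B$, where $K_n$ is the kernel simplicial group formed levelwise. Properness also ensures that fiberwise realization commutes with the finite limit defining $M_{\partial\Delta^n}$, giving a canonical isomorphism $|M_{\partial\Delta^n}(\Dec\, G)|\cong M_{\partial\Delta^n}|\Dec\, G|$, so that $|\Dec_nG|\to M_{\partial\Delta^n}|\Dec\, G|$ is a locally trivial principal bundle; in particular it admits local sections and is therefore an effective epimorphism in $\cK$. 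The main obstacle will be the properness verification in the second paragraph, which requires a careful analysis of the combinatorics of the generalized-horn matching objects as $q$ varies; the levelwise section construction in the first paragraph, by contrast, is a direct generalization of the formula $\sigma(x,y)=s_n(xy^{-1})s_{n+1}(y)$ used in the previous lemma.
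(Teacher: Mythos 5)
Your levelwise analysis is essentially right and in fact proves more than the paper needs at that stage: identifying the level-$q$ piece of $\Dec_n G\to M_{\partial\Delta^n}\Dec\,G$ with a partial matching map of $G$ via Proposition~\ref{bisimplicial matching and Dec} is exactly the paper's key step, and your iterated-degeneracy recursion does produce a global section (with the paper's conventions the outer simplicial direction of $\Dec\,G$ is the one built from the \emph{last} face maps, so the relevant faces at level $q$ are $d_{q+1},\ldots,d_{n+q+1}$ rather than $d_0,\ldots,d_n$; this is a harmless transposition, handled by running the recursion from the top or by passing to the opposite simplicial group as the paper does elsewhere). The genuine gap is in your second and third paragraphs. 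Theorem~\ref{main thm from RS} requires the base simplicial object $[q]\mapsto M_{\Delta^q\star\partial\Delta^n}G$ to be proper, and you have not proved this: the closure property you cite from \cite{RS} concerns fiber products over $B$ of $\bar{f}$-cofibrations, whereas these matching objects are limits involving fiber products over the various $G_m$ (equalizer conditions among faces), so well-sectionedness of the $G_m$ does not simply ``propagate''; you flag this yourself as the main obstacle, and as written the argument does not go through. (Also, the identification $|M_{\partial\Delta^n}\Dec\,G|\cong M_{\partial\Delta^n}|\Dec\,G|$ has nothing to do with properness; it holds because fiberwise geometric realization preserves finite limits.)

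The second point is that the whole principal-bundle detour is unnecessary, and this is how the paper avoids the step you could not complete. An effective epimorphism is defined by a colimit (the coequalizer of its kernel pair); fiberwise geometric realization preserves colimits and finite limits, and colimits in $s\cK/B$ are computed levelwise, so a map of simplicial objects that is levelwise an effective epimorphism realizes to an effective epimorphism. Your sections (or, as in the paper, merely local sections obtained from Lemma~\ref{henriques lemma} applied to the anodyne extension $\Delta^q\star\partial\Delta^n\subset\Delta^{n+q+1}$, using that the group $G$ is locally fibrant) already show the levelwise maps are surjective quotient maps, hence effective epimorphisms in $\cK$, and the lemma follows at once --- with no properness, numerability, or even well-sectionedness hypotheses. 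If you want to keep your explicit global sections, simply replace your second and third paragraphs by this preservation argument.
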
 

\begin{proof} 
Observe that since fiberwise geometric realization preserves finite limits, it 
suffices to prove that 
\[
|\Dec_n G|\to |M_{\partial\Delta^n}\Dec\, G|
\]
is an effective epimorphism in $\cK_{/B}$.  Since fiberwise geometric realization preserves colimits, and 
colimits in $s\cK_{/B}$ are computed pointwise,  
it suffices to prove that 
\begin{equation}
\label{boundary filling in main thm}
(\Dec_n G)_m \to (M_{\partial\Delta^n}\Dec\, G)_m 
\end{equation}
is an effective epimorphism in $\cK_{/B}$ for all $m\geq 0$ and all $n\geq 2$.  
Since colimits in $\cK_{/B}$ are computed in $\cK$ and then equipped with the canonical map 
to $B$, it is sufficient to prove that~\eqref{boundary filling in main thm} is an effective epimorphism 
in $\cK$ for all $m\geq 0$ and all $n\geq 2$.  Since colimits in $\cK$ are computed in $\Top$ it is 
sufficient to prove that the diagram 
\[ 
(\Dec_n G)_m\times_{(M_{\partial\Delta^n}\Dec\, G)_m} (\Dec_n G)_m 
\rightrightarrows (\Dec_n G)_m \to (M_{\partial\Delta^n}\Dec\, G)_m 
\]
is a coequalizer in $\Top$ for all $m\geq 0$ and all $n\geq 2$.  Therefore it is sufficient to prove 
that~\eqref{boundary filling in main thm} is a surjective quotient map for all $m\geq 0$ and all $n\geq 2$.  
Clearly 
\[
(M_{\partial\Delta^n}\Dec\, G)_m = M_{\Delta^m}M_{\partial\Delta^n}\Dec\, G = 
M_{\Delta^m\Box \partial\Delta^n}\Dec\, G, 
\]
using~\eqref{external product and matching objects}.  Therefore, Proposition~\ref{bisimplicial matching and Dec} shows 
that the map~\eqref{boundary filling in main thm} is isomorphic to the map 
\[
G_{n+m+1}\to M_{\Delta^m\star\partial\Delta^n}G, 
\]
where $\Delta^m\star\partial\Delta^n$ denotes the join of the simplicial sets $\Delta^m$ and 
$\partial\Delta^n$.  
The join $\Delta^m\star\partial\Delta^n$ is computed in \cite{Joyal} to be equal to 
\[
\bigcup^n_{j=0} \partial_{j+m+1}\Delta^{n+m+1}. 
\]
It is easy to see that the geometric realization of $\Delta^m\star\partial\Delta^n$ contracts 
onto its final vertex and therefore the inclusion $\Delta^m\star\partial\Delta^n\to \Delta^{n+m+1}$ is an 
acyclic cofibration of simplicial sets.  Therefore the map $G_{n+m+1}\to M_{\Delta^m\star\partial\Delta^n}G$ 
admits local sections by Lemma~\ref{henriques lemma}, and hence is a surjective quotient map.    
\end{proof}  

\subsection{Proof of Proposition~\ref{2nd prop in proof of main thm}}

We now consider the map $\overline{W}|\Dec\, G|\to \overline{W}G$ from~\eqref{double fibration}.  This is induced by 
the homomorphism of groups $|\Dec\, G|\to G$ in $s\cK_{/B}$ which is in turn induced by the homomorphism 
of simplicial groups $d_\first\colon \Dec\, G\to G$ in $s\cK_{/B}$ which in degree $n$ is given by the augmentation homomorphism 
\[
d_\first\colon \Dec_n G\to G_n.   
\]
Since this augmentation homomorphism can be re-written as the augmentation homomorphism 
\[
\Dec_0(\Dec_{n-1}G)\to (\Dec_{n-1}G)_0 
\]
it is enough to study the 
homomorphism $d_0\colon \Dec_0 G\to G_0$ for any group $G$.  For this we have the following lemma.  
\begin{lemma}
\label{summary of dec}
Let $G$ be a well sectioned parametrized simplicial group.  Then the following are true.  
\begin{enumerate} 
\item $\Dec_0G$ is a split augmented group in $s\cK_{/B}$ with augmentation $d_0\colon \Dec_0G\to G_0$ and splitting 
$s_0\colon G_0\to G_1$.  

\item The group $\ker(d_0)$ is well sectioned in $s\cK_{/B}$.  If $G$ is fibrant then so is $\ker(d_0)$.    

\item The short exact sequence of groups in $\cK_{/B}$, 
\[
1\to |\ker(d_0)|\to |\Dec_0G|\to G_0\to 1, 
\]
obtained by applying the fiberwise geometric realization functor $|\cdot |$ is split exact and 
$|\ker(d_0)|$ is fiberwise contractible in $\cK_{/B}$.  

\end{enumerate}
\end{lemma}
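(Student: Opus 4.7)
Statement (1) is a direct verification. Since $\Dec_0$ arises by restriction along a comonad on $\Delta^{\op}$, it preserves simplicial group structures, so $\Dec_0 G$ is a simplicial group in $\cK/B$ with $(\Dec_0 G)_n = G_{n+1}$. I will take the augmentation $d_0 \colon \Dec_0 G \to \underline{G_0}$ (viewing $G_0$ as a constant simplicial group) to be given in degree $n$ by the iterated face homomorphism $d_0^{n+1} \colon G_{n+1} \to G_0$, and the splitting $\phi \colon \underline{G_0} \to \Dec_0 G$ to be given in degree $n$ by $s_0^{n+1}$. That both are simplicial group homomorphisms, and that $d_0 \circ \phi = \mathrm{id}$, all follow inductively from the simplicial identities $d_0 s_0 = d_1 s_0 = \mathrm{id}$ and $d_0 s_i = s_{i-1} d_0$ for $i \geq 1$.

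For statement (2), the splitting from (1) gives at each level $n$ a split short exact sequence of groups $1 \to \ker(d_0)_n \to G_{n+1} \to G_0 \to 1$ in $\cK/B$. This exhibits $\ker(d_0)_n$ as a retract of $G_{n+1}$ in $\cK/B$ via the map $g \mapsto g \cdot s_0^{n+1}(d_0^{n+1}(g))^{-1}$, which sends the identity section of $G_{n+1}$ to the identity section of $\ker(d_0)_n$. Since $\bar{f}$-cofibrations in $\cK/B$ are closed under retracts in the arrow category and $G_{n+1}$ is well sectioned by hypothesis, $\ker(d_0)_n$ is well sectioned.

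For statement (3), I combine (1) and (2) with Proposition~\ref{good implies proper}: both $\Dec_0 G$ (with $(\Dec_0 G)_n = G_{n+1}$ well sectioned) and $\ker(d_0)$ are well sectioned, hence good, hence proper. Fiberwise geometric realization therefore preserves the relevant finite limits and group structures, so applying $|\cdot|$ to the split SES $1 \to \ker(d_0) \to \Dec_0 G \to \underline{G_0} \to 1$ of simplicial groups yields the split SES $1 \to |\ker(d_0)| \to |\Dec_0 G| \to G_0 \to 1$ of groups in $\cK/B$, with splitting $|\phi|$. In particular, $|\Dec_0 G| \cong |\ker(d_0)| \times_B G_0$ as spaces over $B$, with projection to $G_0$ corresponding to $|d_0|$.

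For the fiberwise contractibility of $|\ker(d_0)|$, the plan is to exploit the fact that $\Dec_0 G$ is a split augmented simplicial group, with the splitting $\phi$ providing extra degeneracy structure. The base-case homotopy $h \colon \Dec_0 G \otimes \Delta^1 \to \Dec_0 G$ from $\mathrm{id}$ to $s_0 d_0$ recalled in Section~\ref{sec:background} is the first step, and assembling such homotopies using the iterated degeneracies $s_0^{n+1}$ provided by $\phi$ will produce a simplicial homotopy over $\underline{G_0}$ from $\mathrm{id}_{\Dec_0 G}$ to $\phi \circ d_0$. Properness of $\Dec_0 G$ ensures this homotopy is preserved by fiberwise geometric realization, yielding a fiberwise deformation retraction of $|\Dec_0 G|$ onto $|\phi|(G_0) \cong G_0$ in $\cK/B$; combined with the product decomposition above, this restricts on each $\ker(d_0)$-fiber to a fiberwise contraction of $|\ker(d_0)|$ onto its identity section. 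The main obstacle will be the assembly step: building the full contracting homotopy over $\underline{G_0}$ from the base step $h$, since the naive iterate is obstructed by the idempotency of $s_0 d_0$, so one must carefully exploit the extra degeneracy structure supplied by $\phi$ and confirm compatibility with the product decomposition after realization.
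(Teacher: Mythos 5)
Your parts (1) and (2), and the split-exactness claim in (3), are correct and essentially what the paper does (the paper also obtains (2) by exhibiting $\ker(d_0)$ levelwise as a retract of $\Dec_0 G$ and using that $\bar{f}$-cofibrations are closed under retracts). The genuine gap is in the fiberwise contractibility of $|\ker(d_0)|$, which is the substantive content of (3): you do not construct the contracting homotopy but explicitly defer it (``the main obstacle will be the assembly step''), and the plan you sketch points at the wrong structural ingredient. The splitting $\phi$, built from the degeneracies $s_0^{n+1}$, is not what produces a contraction; the contraction comes from the degeneracies that were \emph{stripped off} in forming $\Dec_0 G$, namely the extra degeneracies $s_{n+1}\colon (\Dec_0 G)_n = G_{n+1}\to G_{n+2}$. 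With these there is nothing to assemble or iterate: the homotopy recalled in Section~\ref{sec:background} between $\mathrm{id}_{\Dec_0 G}$ and the composite of the augmentation with the splitting (written there loosely as $s_0d_0$; its level-$n$ component is $s_0^{n+1}d_0^{n+1}$ --- note the literal levelwise map $s_0d_0$ is not even simplicial) is already the homotopy your plan calls for, so the ``idempotency obstruction'' you worry about is a misreading rather than a real difficulty. Still, as written, the key construction is missing, and your route would additionally require verifying that this homotopy is a homotopy over the constant object $G_0$ and that it is compatible with the product decomposition after realization --- further steps you flag but do not carry out.

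The paper sidesteps all of this by contracting $\ker(d_0)$ directly: since $d_0 s_{n+1} = s_n d_0$, the extra degeneracy $s_{n+1}$ restricts to a map $(\ker(d_0))_n\to(\ker(d_0))_{n+1}$, so $\ker(d_0)$ is an augmented simplicial object over $B$ with extra degeneracies; this yields a simplicial homotopy $h\colon \ker(d_0)\otimes\Delta^1\to\ker(d_0)$ from the identity to the constant map at the identity section, and applying $|\cdot|$ together with the isomorphism $|X\otimes K|\cong |X|\otimes|K|$ gives the fiberwise contraction onto $B$. Note also that properness and goodness play no role in this step: realization preserves tensors and finite limits regardless, so your appeals to Proposition~\ref{good implies proper} here are not what is needed, while the input that is actually needed --- the restricted extra degeneracies on $\ker(d_0)$ --- is absent from your argument.
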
  

\begin{proof}
The first statement of the lemma is clear.  For the second statement, 
observe that $\ker(d_0)$ is a retract of $G$, and hence is well sectioned 
since $\bar{f}$-cofibrations are closed under retracts.  A similar remark applies to the 
statement regarding fibrancy.    Finally, 
for the last statement, we prove that $|\ker(d_0)|$ is fiberwise contractible 
in $\cK_{/B}$ (the split exactness is clear from 1).  To accomplish this we 
will prove that the simplicial object $\ker(d_0)$ in $\cK_{/B}$ has extra degeneracies.  
In fact, because of the simplicial identity $d_0s_{n+1} = s_{n}d_0$ for $i<j$,
 the left over degeneracy map $s_{n+1}\colon (\Dec_0G)_n\to (\Dec_0G)_{n+1}$ restricts to define a homomorphism 
 \[
 s_{n+1} \colon (\ker(d_0))_n\to (\ker(d_0))_{n+1} 
 \]
 for all $n\geq 0$.  One can therefore define a simplicial homotopy $h\colon \ker(d_0)\otimes \Delta^1\to \ker(d_0)$ 
 which fits into a commutative diagram 
 \[
 \xymatrix{ 
 \ker(d_0) \ar[d] \ar[dr]^-1 & \\ 
 \ker(d_0)\otimes \Delta^1 \ar[r]^-h & \ker(d_0) \\ 
 \ker(d_0) \ar[u] \ar[ur]_-{\mathrm{id}} }
 \]
in $s\cK_{/B}$.  Applying the fiberwise geometric realization functor $|\cdot |$, and using the fact that 
$|\cdot |\colon s\cK_{/B}\to \cK_{/B}$ is compatible with the geometric realization functor $|\cdot |\colon \sSet\to \cK$ in the sense 
that there is an isomorphism $|X\otimes K|\cong |X|\otimes |K|$ for all objects $X$ of $s\cK_{/B}$ and simplicial sets $K$, 
we see that $h$ induces a fiberwise contraction of $\ker(d_0)$ onto $B$.   
\end{proof}

Write $p$ for the homomorphism $\Dec\, G\to G$.  From the lemma we see that $p_n\colon \Dec_nG\to G_n$ is 
split for all $n\geq 0$ and hence the induced homomorphism $|\Dec_n G|\to G_n$ is also split.  
Therefore Corollary~\ref{corr:surj homo of grps} implies that $|\Dec\, G|\to G$ is a local fibration 
in $s\cK_{/B}$.  The following lemma is an immediate consequence of this, using 
the well known fact that $\Wbar\colon s\Gp\to s\Set$ sends surjective Kan fibrations to Kan fibrations.  

\begin{lemma} 
The induced map $p\colon \overline{W}|\Dec\, G|\to \overline{W}G$ is a local fibration of simplicial presheaves on $B$.  
\end{lemma}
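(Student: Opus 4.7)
The plan is to verify the local fibration property by passing to stalks. Since $\cC = \Open(B)$, a map $f\colon X\to Y$ of simplicial presheaves on $B$ is a local fibration precisely when every stalk $f_p\colon X_p\to Y_p$ is a Kan fibration of simplicial sets. So I would first reduce the statement to checking that, for every $p\in B$, the map $(\Wbar|\Dec\, G|)_p \to (\Wbar G)_p$ is a Kan fibration of simplicial sets.

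Next I would argue that stalks commute with $\Wbar$. The functor $\Wbar$ of a simplicial group $H$ is assembled level-wise as a finite product $(\Wbar H)_n = H_{n-1}\times \cdots \times H_0$, with face and degeneracy maps expressed in terms of the group structure (finite products and inverses). Since stalks in $\cK/B$ are computed as filtered colimits along neighborhood systems of $p$, and filtered colimits commute with finite products in $\Set$, one obtains a natural isomorphism $(\Wbar H)_p \cong \Wbar(H_p)$ for any group $H$ in $s\cK/B$. Under this identification, the map in question becomes $\Wbar(|\Dec\, G|_p) \to \Wbar(G_p)$.

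Then I would apply the preceding results to the stalks themselves. The homomorphism $|\Dec\, G|\to G$ in $s\cK/B$ is level-wise split (by Lemma~\ref{summary of dec}), so after passing to stalks the homomorphism $|\Dec\, G|_p\to G_p$ is still level-wise a split surjection of ordinary simplicial groups. In particular it is a surjective homomorphism of simplicial groups, and any such homomorphism is automatically a Kan fibration (this is the classical fact that homomorphisms of simplicial groups are Kan fibrations exactly when they are level-wise surjective).

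Finally I would invoke the classical result that $\Wbar\colon s\Gp \to s\Set$ carries surjective Kan fibrations of simplicial groups to Kan fibrations of simplicial sets (see e.g.\ \cite{MaySOAT}, Lemma~21.3 and the surrounding discussion). Combined with the two previous reductions, this shows that $(\Wbar|\Dec\, G|)_p \to (\Wbar G)_p$ is a Kan fibration for every $p\in B$, which completes the proof. The only subtle step is the interchange of $\Wbar$ with stalks; the argument is routine precisely because $\Wbar$ is built from finite limits and because stalks in $\cC = \Open(B)$ are filtered colimits, so no compactness or topological hypothesis on $B$ is needed beyond what has already been assumed.
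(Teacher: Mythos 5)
Your argument is correct and is essentially the paper's own proof: the paper likewise uses the degree-wise splitting of $|\Dec\, G|\to G$ to get a (stalkwise) surjective homomorphism of simplicial groups, invokes the classical fact that such homomorphisms are Kan fibrations, and uses that $\Wbar$ sends surjective fibrations of simplicial groups to Kan fibrations, all checked on stalks (your explicit verification that $\Wbar$ commutes with stalks is a point the paper leaves implicit). One small quibble: your parenthetical ``Kan fibrations exactly when they are level-wise surjective'' overstates the classical fact --- the converse fails (e.g.\ the inclusion of the trivial group into a constant simplicial group is a Kan fibration) --- but you only use the true direction, so the proof is unaffected.
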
 

Since $p$ is a local 
fibration with fiber $\overline{W}|\ker(p)|$ we have (see \cite{Brown}) an exact sequence of sets 
\begin{equation} 
\label{W bar exact} 
[1,\overline{W} |\ker(p)|]\to [1,\overline{W} |\Dec\, G| ] \to [1,\overline{W} G] 
\end{equation}
Recall from Lemma~\ref{summary of dec} above that for every $n\geq 0$ the group 
$|\ker(p_n)|$ in $\cK_{/B}$ is fiberwise contractible, fibrant and well sectioned.  
Therefore, from the exactness of the sequence~\eqref{W bar exact}, we see that 
to prove the map $\W |\Dec\, G|\to \W G$ induces an isomorphism 
$[1,\W |\Dec\, G| ] \to [1,\W G]$ it is enough to prove the following lemmas.  

\begin{lemma} 
\label{lem:surjective}
Let $G$ be a group in $s\cK_{/B}$.  Then the map 
\[
[1,\W |\Dec\, G|]\to [1,\W G]  
\]
in $\Ho(s\Pre(B))$ 
induced by 
the map $\W |\Dec\, G|\to \W G$ in $s\cK_{/B}$ 
is surjective. 
\end{lemma}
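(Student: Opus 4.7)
My plan is to deduce the lemma from the stronger claim that the map $\Wbar p\colon \Wbar|\Dec\,G|\to \Wbar G$ is itself a local weak equivalence of simplicial presheaves on $B$, from which the induced map on $[1,-]$ will be a bijection and so in particular surjective. The previous lemma already identifies $\Wbar p$ as a local fibration, so it suffices to show that the fiber $\Wbar|\ker(p)|$ is locally acyclic, i.e.\ that for every $b\in B$ the stalk $\Wbar((|\ker(p)|)_b)=(\Wbar|\ker(p)|)_b$ is a weakly contractible Kan complex. This will follow once I exhibit a simplicial contraction of the simplicial group $\ker(d_\first)$ in $s(s\cK/B)$, taken in the outer simplicial direction, onto the constant trivial group.

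The essential combinatorial input is the following. Viewing $\Dec\,G$ as the bisimplicial group in $\cK/B$ with $(m,n)$-bisimplex group $G_{m+n+1}$, whose outer face maps at inner level $m$ are $d_{m+1+i}\colon G_{m+n+1}\to G_{m+n}$ for $i=0,\ldots,n$, the degeneracy $s_m\colon G_{m+n}\to G_{m+n+1}$ serves as an extra degeneracy for the augmentation $\Dec\,G\to G$ in the outer direction: the simplicial identities $d_{m+1}s_m=\mathrm{id}$ and $d_{m+1+i}s_m=s_m d_{m+i}$ for $i\geq 1$ are precisely the required relations. Because $s_m$ is a group homomorphism, it restricts to an extra degeneracy on $\ker(d_\first)$ and thus produces a simplicial contraction of this simplicial group onto the constant trivial group. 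Since $G$ is well-sectioned, fiberwise geometric realization preserves finite limits, respects group structures, and commutes with tensoring by $\Delta^1$, so this simplicial contraction transports to a simplicial contraction of $|\ker(p)|$ onto $B$ in $s\cK/B$. Passing to germs of sections at any point $b\in B$ yields a simplicially contractible stalk simplicial group, and $\Wbar$ applied to a contractible simplicial group is weakly contractible, giving the desired local acyclicity of $\Wbar|\ker(p)|$.

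The same conclusion admits an equivalent cocycle formulation that one could also write out: given $\alpha\colon V\to\Wbar G$ representing a class in $[1,\Wbar G]$ with $V\to 1$ a local weak equivalence, the pullback $V'=V\times_{\Wbar G}\Wbar|\Dec\,G|$ gives a local fibration $V'\to V$ whose stalk fibers are exactly $\Wbar((|\ker(p)|)_b)$; with these fibers weakly contractible, $V'\to V$ is a local weak equivalence, $V'\to 1$ is locally acyclic, and the projection $V'\to\Wbar|\Dec\,G|$ is a cocycle whose class maps to $[\alpha]$. The main obstacle in the argument is the meticulous verification that $s_m$ really does define a compatible extra degeneracy for the entire outer simplicial structure of $\Dec\,G$, that the resulting homotopy is of group objects so that fiberwise realization applies cleanly, and that the contraction survives passage to stalks; none of these steps is individually deep, but together they constitute a nontrivial piece of bisimplicial bookkeeping.
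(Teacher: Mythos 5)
Your plan rests on exhibiting an outer-direction simplicial contraction of $\ker(d_\first)$ onto the trivial group, and that is where the argument breaks. The extra degeneracy you write down is for the wrong augmentation: with the conventions you state (outer faces $d_{m+1+i}$ at inner level $m$), the identities $d_{m+1}s_m=\mathrm{id}$ and $d_{m+1+i}s_m=s_md_{m+i}$ exhibit $s_m$ as a contracting datum for the augmentation given by the \emph{first outer face}, i.e.\ for $d_\last\colon \Dec\,G\to G$ (the map which after fiberwise realization gives $|\Dec\,G|\to|G|$ and is the subject of Proposition~\ref{hypercover}), not for $d_\first\colon\Dec\,G\to G$, whose kernel is the fiber relevant to this lemma. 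In particular $s_m$ does not preserve $\ker(d_\first)$: since $(d_0)^{m+1}s_m=(d_0)^m$, an element $g$ with $(d_0)^{m+1}g=1$ is sent to $s_mg$ with $(d_0)^{m+1}(s_mg)=(d_0)^mg$, which lies in $\ker(d_0)$ but need not be the identity; being a group homomorphism is irrelevant here. Worse, no outer contraction of $\ker(d_\first)$ onto the constant trivial group can exist at all: such a homotopy would restrict to each fixed inner level $m$ and force the outer simplicial object $n\mapsto\ker\bigl((d_0)^{m+1}\colon G_{m+n+1}\to G_n\bigr)$ to have trivial $\pi_0$, but already for $B=\ast$ and $G$ the discrete simplicial abelian group corresponding to $A$ placed in degree $1$, the inner-level-$0$ column is $n\mapsto\ker(d_0\colon G_{n+1}\to G_n)$ with both faces $d_1,d_2\colon\ker(d_0\colon G_2\to G_1)\to\ker(d_0\colon G_1\to G_0)$ equal to the identity of $A$, so its $\pi_0$ is $A\neq 0$. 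The contractibility the paper actually has (Lemma~\ref{summary of dec}) is of each $\ker(p_n)$ in the \emph{inner} direction, one outer level at a time, which realizes to fiberwise contractibility of $|\ker(p_n)|$ but gives no control over the outer simplicial direction, which is what your claim that $\Wbar|\ker(p)|$ is stalkwise trivial would require.

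There is also a structural warning sign: if $\Wbar|\Dec\,G|\to\Wbar G$ were a local weak equivalence, Proposition~\ref{2nd prop in proof of main thm} would follow at once with no hypothesis on $B$, whereas the paper needs $B$ paracompact precisely to dispose of the kernel's contribution (Lemma~\ref{lem:contractible}, via numerable bundle theory); your argument also invokes well-sectionedness, which the lemma does not assume. The intended proof of surjectivity is much cheaper and avoids all of this: $|\Dec\,G|\to G$ is levelwise split, hence the induced map of presheaves of simplicial groups is objectwise surjective, so $\Wbar|\Dec\,G|\to\Wbar G$ is an objectwise Kan fibration onto a locally fibrant target, and Lemma~\ref{surjectivity} (the split-hypercover, filtered-colimit argument) gives surjectivity of $[1,\Wbar|\Dec\,G|]\to[1,\Wbar G]$.
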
 

\begin{lemma} 
\label{lem:contractible}
Let $G$ be a well sectioned, fibrant group in $s\cK_{/B}$ such that $G_n$ is fiberwise contractible for all $n\geq 0$.  
Suppose that $B$ is paracompact and Hausdorff,  then $[1,\W G]$ is trivial.   
\end{lemma}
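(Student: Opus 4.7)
The plan is to reduce $[1,\Wbar G]=\ast$ to the vanishing of $\check H^1(B,|G|)$ for a fibrewise contractible structure group, and then to transport this vanishing back to $H(B,G)$ using Proposition~\ref{hypercover} and Lemma~\ref{lem:surjective}; crucially, nothing in this argument invokes Theorem~\ref{main result}, so there is no circularity.

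The first step is to show that the fibres of $|G|\to B$ are contractible. For each $b\in B$, the fibre over $b$ is the geometric realization in $\cK$ of the simplicial space $G|_b$ whose $n$-th level is the fibre $(G_n)_b$. By hypothesis each $(G_n)_b$ is contractible; since $G$ is well sectioned it is good and proper by Proposition~\ref{good implies proper}, so the degeneracies of $G|_b$ are closed cofibrations and each $(G_n)_b$ is well pointed. Thus $G|_b$ is a good simplicial space with contractible levels, and a standard argument (for instance via Segal's realization spectral sequence) yields that its geometric realization is contractible.

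The second step is to deduce $\check H^1(B,|G|)=\ast$. Any fibrewise principal $|G|$-bundle $P\to B$ is locally trivial, and numerability over the paracompact base $B$ makes $P\to B$ a Hurewicz fibration. Its fibres are torsors under the contractible groups $|G|_b$ and are hence themselves contractible, so Dold's theorem produces a section of $P\to B$, which trivialises the bundle. Combined with Proposition~\ref{hypercover}, which supplies a locally acyclic local fibration $\Wbar|\Dec G|\to\Wbar|G|$, and with the classical identification $H(B,|G|)=\check H^1(B,|G|)$ for the ordinary group object $|G|$, this gives $H(B,|\Dec G|)\cong H(B,|G|)=\ast$. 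Lemma~\ref{lem:surjective} then provides a surjection $H(B,|\Dec G|)\to H(B,G)$, and this forces $H(B,G)=[1,\Wbar G]=\ast$.

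The main obstacle is the second step, and specifically the justification that a fibrewise principal bundle in $\cK/B$ satisfies the hypotheses of Dold's theorem in this parametrized setting: one needs that locally trivial plus numerable implies Hurewicz fibration, and that a section of such a principal bundle trivialises it. Both are standard within the framework of \cite{May-Sig,RS}, but they should be cited with some care. Once these ingredients are in place the argument is purely formal; in particular, no explicit construction of a simplicial null-homotopy for a general $G$-cocycle is required.
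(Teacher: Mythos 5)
Your proposal is correct and takes essentially the same route as the paper: prove that $\check H^1(B,|G|)\cong[1,\Wbar|G|]$ is trivial and transfer this back through the isomorphism supplied by Proposition~\ref{hypercover} and the surjection $[1,\Wbar|\Dec\, G|]\twoheadrightarrow[1,\Wbar G]$ from Lemma~\ref{lem:surjective}. The only minor difference is in how triviality of fiberwise principal $|G|$-bundles is justified: you use pointwise contractibility of the fibers of $|G|$ together with Dold's section theorem, whereas the paper deduces it from the fiberwise contractibility of $|G|$ (as the realization of a good simplicial object with fiberwise contractible levels) over the paracompact base.
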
 

\begin{proof}[Proof of Lemma~\ref{lem:surjective}]
We have seen above that the map $|\Dec\, G|\to G$ is degree-wise a split homomorphism 
of groups.  It follows that the corresponding homomorphism of groups $|\Dec\, G|\to G$ in $s\Pre(B)$ 
is object-wise surjective, in the sense that for each open set $U\subset B$, the homomorphism of 
simplicial groups $|\Dec\, G|(U)\to G(U)$ is surjective.  
Therefore $\Wbar |\Dec\, G|\to \Wbar G$ is a fibration in the projective model structure on $s\Pre(B)$ 
and the result follows from Lemma~\ref{surjectivity}. 
\end{proof}

\begin{proof}[Proof of Lemma~\ref{lem:contractible}] 
From Proposition~\ref{hypercover} above we see that for any 
well sectioned, fibrant parametrized simplicial group $G$ there is an isomorphism 
\[ 
[1,\overline{W} |\Dec\, G| ] \cong [1,\overline{W} |G|] 
\] 
and we have just seen that there is a surjection 
\begin{equation}
\label{surjection}
[1,\overline{W} |\Dec\, G| ] \twoheadrightarrow [1,\overline{W} G].   
\end{equation}
Consider the set $[1,\overline{W} |G|]$.  Earlier we have recalled how this is shown in 
\cite{Jardine3} to be isomorphic to the \v{C}ech cohomology set 
$\check{H}^1(B,|G|)$.  Since $G_n$ is fiberwise contractible for all $n\geq 0$, $G$ is good 
and the constant simplicial object $B$ is good, it follows that 
$|G|$ is fiberwise contractible.  We claim that every principal $|G|$ bundle 
on the paracompact space $B$ is trivial and so $[1,\overline{W}|G|]$ is trivial.  
It then follows that  
$[1,\overline{W} |\Dec\, G| ]$ is trivial; since the map~\eqref{surjection}   
is surjective it follows that $[1,\overline{W} G]$ is trivial as required.  
It remains to prove the claim.  It is enough to prove that any 
principal $|G|$-bundle $P$ on $B$ has a section under the hypotheses above.  
Since $|G|$ is fiberwise 
contractible, it follows that $P\to B$ is locally a fiber homotopy equivalence.  
Since $B$ is paracompact, it follows that $P\to B$ is a fiber homotopy equivalence, 
and hence has a section (see Corollary 3.2 of \cite{Dold}; the fact that Dold 
works in $\Top$ rather than $\cK$ presents no difficulty).    
\end{proof}
 
\section*{Acknowledgements} 

I foremost would like to thank Urs Schreiber for his encouragement, and for 
many helpful email discussions as well as for his comments 
on earlier drafts of these notes.   I would also 
like to thank Tom Leinster and Thomas Nikolaus for useful conversations, and 
Jim Stasheff for his comments on an initial draft of this paper.  Finally, I would like to thank 
John Baez for introducing me to simplicial homotopy theory.

\end{document}